\theoremstyle{plain}
\newtheorem{theorem}{Theorem}[section]
\newtheorem{lemma}[theorem]{Lemma}
\newtheorem{proposition}[theorem]{Proposition}
\theoremstyle{definition}
\newtheorem{remark}[theorem]{Remark}
\newtheorem{remarks}[theorem]{Remarks}
\newtheorem{example}[theorem]{Example}
\numberwithin{equation}{section}
\newcommand\bC{{\mathbb C}}
\newcommand\bF{{\mathbb F}}
\newcommand\bG{{\mathbb G}}
\newcommand\bQ{{\mathbb Q}}
\newcommand\bV{{\mathbb V}}
\newcommand\bZ{{\mathbb Z}}
\newcommand\cL{{\mathcal L}}
\newcommand\cM{{\mathcal M}}
\newcommand\cO{{\mathcal O}}
\newcommand\kb{\bar{k}}
\newcommand\fm{{\mathfrak m}}
\newcommand\ab{\operatorname{ab}}
\newcommand\aff{\operatorname{aff}}
\newcommand\ant{\operatorname{ant}}
\newcommand\gug{\operatorname{gug}}
\newcommand\id{\operatorname{id}}
\newcommand\tor{\operatorname{tor}}
\newcommand\Aut{\operatorname{Aut}}
\newcommand\Hom{\operatorname{Hom}}
\newcommand\Pic{\operatorname{Pic}}
\newcommand\Spec{\operatorname{Spec}}
\newcommand\Supp{\operatorname{Supp}}
\title{Anti-affine algebraic groups}
\author{Michel Brion}
\address{Universit\'e de Grenoble I\\
D\'epartement de Math\'ematiques\\
Institut Fourier, UMR 5582 du CNRS\\
38402 Saint-Martin d'H\`eres Cedex, France}
\email{Michel.Brion@ujf-grenoble.fr}
\begin{document}
 
\begin{abstract}
We say that an algebraic group $G$ over a field is anti-affine 
if every regular function on $G$ is constant. We obtain a
classification of these groups, with applications to the structure 
of algebraic groups in positive characteristics, and to the
construction of many counterexamples to Hilbert's fourteenth 
problem.
\end{abstract}

\maketitle

\section{Introduction}
\label{sec:introduction}

We say that a group scheme $G$ of finite type over a field $k$ is
anti-affine if $\cO(G)=k$; then $G$ is known to be smooth, connected
and commutative. Examples include abelian varieties, their universal
vector extensions (in characteristic $0$ only) and certain
semi-abelian varieties.

\smallskip

The classes of anti-affine groups and of affine (or, 
equivalently, linear) group schemes play complementary roles in 
the structure of group schemes over fields. Indeed, any connected 
group scheme $G$, of finite type over $k$, has a smallest normal
subgroup scheme $G_{\ant}$ such that the quotient $G/G_{\ant}$ is affine. 
Moreover, $G_{\ant}$ is anti-affine and central in $G$ (see \cite{DG70}). 
Also, $G$ has a smallest normal connected affine subgroup 
scheme $G_{\aff}$ such that $G/G_{\aff}$ is an abelian variety 
(as follows from Chevalley's structure theorem, see 
\cite{BLR90}). This yields the \emph{Rosenlicht decomposition}: 
$G = G_{\aff} \, G_{\ant}$ and $G_{\aff} \cap G_{\ant}$ contains 
$(G_{\ant})_{\aff}$ as an algebraic subgroup of finite index
(see \cite{Ro56}).

\smallskip

Affine group schemes have been extensively investigated, but 
little seems to be known about their anti-affine counterparts; 
they only appear implicitly in work of Rosenlicht and Serre 
(see \cite{Ro58, Ro61, Se58a}). Here we obtain some fundamental 
properties of anti-affine groups, and reduce their structure to
that of abelian varieties.

\smallskip

In Theorem \ref{thm:cla}, we classify anti-affine 
algebraic groups $G$ over an arbitrary field $k$ with separable
closure $k_s$. In positive characteristics, $G$ is a semi-abelian
variety, parametrized by a pair $(A,\Lambda)$ where $A$ is an abelian 
variety over $k$, and $\Lambda$ is a sublattice of $A(k_s)$, stable
under the action of the Galois group.  
The classification is a bit more complicated in characteristic 
$0$: the parameters are then triples $(A,\Lambda,V)$
where $A$ and $\Lambda$ are as above, and $V$ is a subspace of 
the Lie algebra of $A$. In both cases, $A$ is the dual of the 
abelian variety $G/G_{\aff}$.

\smallskip

We illustrate this classification by describing the universal
morphisms from anti-affine varieties to commutative algebraic groups,
as introduced by Serre (see \cite{Se58a,Se58b}).

\smallskip

Together with the Rosenlicht decomposition, our classification
yields structure results for several classes of group schemes.
As a first consequence, any connected commutative group scheme
over a perfect field $k$ is the almost direct product 
of an anti-affine group, a torus, and a connected unipotent
group scheme (see Theorem \ref{thm:com} for a precise statement).

\smallskip

In another direction, if the ground field $k$ is finite, then any 
anti-affine group is an abelian variety. This gives back a remarkable
result of Arima: any connected group scheme over a finite field has
the decomposition $G = G_{\aff} \, G_{\ab}$ where $G_{\ab}$ is the
largest abelian subvariety of $G$; moreover, $G_{\aff} \cap G_{\ab}$
is finite (see \cite{Ar60,Ro61}).

\smallskip

Arima's result does not extend to (say) uncountable and 
algebraically closed fields, as there exist semi-abelian varieties 
that are anti-affine but non-complete. Yet we obtain a 
structure result for connected algebraic groups over perfect 
fields of positive characteristics, namely, the decomposition 
$G = H \, S$ where $H \subset G_{\aff}$ denotes 
the smallest normal connected subgroup such that 
$G_{\aff}/H$ is a torus, and $S \subset G$ is a 
semi-abelian subvariety; moreover, $H \cap S$ is finite 
(Theorem \ref{thm:dec}). 

\smallskip

Our classification also has rather unexpected applications to 
Hilbert's fourteenth problem. In its algebro-geometric formulation, 
it asks if the coordinate ring of every quasi-affine variety is 
finitely generated (see \cite{Za54}, and \cite{Win03} for the
equivalence with the invariant-theoretic formulation). The answer is
known to be negative, the first counterexample being due to Rees (see
\cite{Re58}). Here we obtain many counterexamples, namely, all
$\bG_m$-torsors associated to ample line bundles over anti-affine,
non-complete algebraic groups (Theorem \ref{thm:zar}). 

\smallskip

Some of the preceding statements bear a close analogy 
to known results on complex Lie groups. Specifically, any 
connected complex Lie group $G$ has a smallest closed normal 
subgroup $G_{\tor}$ such that the quotient $G/G_{\tor}$ 
is Stein. Moreover, $G_{\tor}$ is connected and central in $G$, 
and every holomorphic function on $G_{\tor}$ is constant. The latter 
property defines the class of toroidal complex Lie groups, 
also known as Cousin groups, or quasi-tori, or (H.C) groups.
Toroidal groups may be parametrized by pairs $(T, \Lambda)$ where 
$T$ is a complex torus, and $\Lambda$ is a sublattice of the dual 
torus. Any connected commutative complex Lie group admits a unique 
decomposition $G = G_{\tor} \times (\bC^*)^m \times \bC^n$ 
(see the survey \cite{AK01} for these results). 
Yet this analogy is incomplete, as Chevalley's structure theorem
admits no direct generalization to the setting of complex Lie
groups. In fact, the maximal closed connected Stein subgroups of a
connected Lie group need not be pairwise isomorphic (see \cite{AK01}
again), or normal, or co-compact.

\smallskip

Returning to the algebraic setting, our structure results have
applications to homogeneous spaces, which will be developed
elsewhere. A natural question asks for their generalizations to group
schemes over (say) local artinian rings, or discrete valuation rings.

\bigskip

\noindent
{\bf Acknowledgements.}
Most of the results of this article were first presented at the 
2007 Algebra Summer School in Edmonton. I thank the organizers 
and the participants for their stimulating interest.
Also, many thanks are due to St\'ephane Druel, Adrien Dubouloz, David
Harari and Ga\"el R\'emond for fruitful discussions. 

\smallskip

After a first version of this text was posted on arXiv, I was informed
by Carlos Sancho de Salas of his much earlier book \cite{Sa01}, 
where the classification of anti-affine groups over algebraically
closed fields is obtained. Subsequently, he extended this
classification to arbitrary fields in \cite{SS08}, jointly with 
Fernando Sancho de Salas. The approach of \cite{Sa01,SS08} differs
from the present one, their key ingredient being the classification of
certain torsors over anti-affine varieties. The terminology is also
different: the variedades cuasi-abelianas of \cite{Sa01}, or
quasi-abelian varieties of \cite{SS08}, are called anti-affine groups
here. I warmly thank Carlos Sancho de Salas for making me aware of his
work, and for many interesting exchanges.

\bigskip

\noindent
{\bf Notation and conventions.}
Throughout this article, we denote by $k$ a field with separable
closure $k_s$ and algebraic closure $\kb$. The Galois group of $k_s$
over $k$ is denoted by $\Gamma_k$. A $\Gamma_k$-\emph{lattice} is a
free abelian group of finite rank equipped with an action of
$\Gamma_k$. 

By a \emph{scheme}, we mean a scheme of finite type over $k$, unless 
otherwise specified; a point of a scheme will always mean a closed
point. Morphisms of schemes are understood to be $k$-morphisms, and
products are taken over $k$. A \emph{variety} is a separated, 
geometrically integral scheme. 

We use \cite{SGA3} as a general reference for \emph{group schemes}.
However, according to our conventions, any group scheme $G$ is assumed 
to be of finite type over $k$. The group law of $G$ is denoted
multiplicatively, and $e_G$ stands for the neutral element of $G(k)$,
except for commutative groups where we use an additive notation.
By an \emph{algebraic group}, we mean a smooth group scheme $G$,
possibly non-connected. An \emph{abelian variety} is a connected and
complete algebraic group. For these, we refer to \cite{Mu70}, and to
\cite{Bo91} for affine algebraic groups. 

Given a connected group scheme $G$, we denote by $G_{\aff}$ 
the smallest normal connected affine subgroup scheme of $G$ such 
that the quotient $G/G_{\aff}$ is an abelian variety, and by
\begin{equation}\label{eqn:alb}
\alpha_G : G \to G/G_{\aff} =: A(G)
\end{equation}
the quotient homomorphism. The existence of $G_{\aff}$ is due 
to Chevalley in the setting of algebraic groups over algebraically 
closed fields; then $G_{\aff}$ is an algebraic group as well, 
see \cite{Ro56,Ch60}. Chevalley's result implies the 
existence of $G_{\aff}$ for any connected group scheme $G$, see 
\cite[Lem.~IX.2.7]{Ra70} or \cite[Thm.~9.2.1]{BLR90}.

Also, we denote by
\begin{equation}\label{eqn:aff}
\varphi_G : G \to \Spec \cO(G)
\end{equation}
the canonical morphism, known as the affinization of $G$. Then 
$\varphi_G$ is the quotient homomorphism by $G_{\ant}$, the largest 
anti-affine subgroup scheme of $G$. Moreover, $G_{\ant}$ is a 
connected algebraic subgroup of the centre of $G$ (see 
\cite[Sec.~III.3.8]{DG70} for these results).

\section{Basic properties}

\subsection{Characterizations of anti-affine groups}
\label{subsec:cha}

Recall that a group scheme $G$ over $k$ is affine if and only if 
$G$ admits a faithful linear representation in a finite-dimensional 
vector space; this is also equivalent to the affineness of 
the $K$-group scheme $G_K := G \otimes_k K$ for some field extension 
$K/k$. We now obtain analogous criteria for anti-affineness:

\begin{lemma}\label{lem:equ}
The following conditions are equivalent for a $k$-group scheme $G$:

\smallskip

\noindent
{\rm (i)} $G$ is anti-affine.

\smallskip

\noindent
{\rm (ii)} $G_K$ is anti-affine for some field extension $K/k$.

\smallskip

\noindent
{\rm (iii)} Every finite-dimensional linear representation of $G$ is
trivial.

\smallskip

\noindent
{\rm (iv)} Every action of $G$ on a variety $X$ containing a
fixed point is trivial.
\end{lemma}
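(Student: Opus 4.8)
The plan is to prove the equivalences by a cycle of implications, using the known relationship between anti-affineness and the affinization morphism $\varphi_G : G \to \Spec \cO(G)$ recalled at the end of the introduction: $G$ is anti-affine precisely when $\Spec \cO(G)$ is a point, equivalently when $G = G_{\ant}$, equivalently when the only affine quotient of $G$ is trivial. I would establish (i)$\Rightarrow$(iii)$\Rightarrow$(iv)$\Rightarrow$(i) and handle (ii) separately by showing (i)$\Leftrightarrow$(ii).

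For (i)$\Rightarrow$(iii): a finite-dimensional linear representation $\rho : G \to GL(V)$ factors through the affinization $\varphi_G$, since $GL(V)$ is affine and hence any homomorphism to it kills $G_{\ant} = G$; so $\rho$ is trivial. (Alternatively, the matrix coefficients of $\rho$ are regular functions on $G$, hence constant, which forces $\rho(g) = \rho(e_G)$ for all $g$.) For (iii)$\Rightarrow$(iv): suppose $G$ acts on a variety $X$ with a fixed point $x$. Then $G$ acts on the local ring $\cO_{X,x}$, preserving its maximal ideal $\fm_x$, hence acts linearly on each finite-dimensional quotient $\cO_{X,x}/\fm_x^n$, and these actions are trivial by (iii). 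Taking the limit, $G$ acts trivially on $\widehat{\cO}_{X,x}$, hence on $\cO_{X,x}$, hence (by separatedness and integrality, so that $\cO_{X,x} \hookrightarrow$ the function field and the orbit map is determined near $x$) the action is trivial on a neighbourhood of $x$ and therefore on all of $X$. For (iv)$\Rightarrow$(i): the translation action of $G$ on itself does not have a fixed point unless $G$ is trivial, so instead I would use the action of $G$ on $X = \Spec \cO(G)$ induced by left translation (this action is well-defined since $\cO(G)$ is a $G$-stable subalgebra of itself, with $G$ acting via $\varphi_G$); the image of $e_G$ under $\varphi_G$ is a fixed point of this action precisely because the action factors through $\varphi_G$, so by (iv) the action on $\Spec \cO(G)$ is trivial, which means $\varphi_G$ is constant, i.e. $\cO(G) = k$.

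For the equivalence (i)$\Leftrightarrow$(ii): one direction is trivial from (i)$\Rightarrow$(ii) by taking $K = k$. For (ii)$\Rightarrow$(i), I would argue that $\cO(G_K) = \cO(G) \otimes_k K$ by flat base change (regular functions commute with field extension), so if $\cO(G_K) = K$ then $\cO(G) \otimes_k K = K$, forcing $\dim_k \cO(G) = 1$, i.e. $\cO(G) = k$.

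The main obstacle I anticipate is the last step of (iii)$\Rightarrow$(iv): passing from triviality of the action on all the finite-order infinitesimal neighbourhoods $\cO_{X,x}/\fm_x^n$ to triviality of the action on $X$ itself. This requires a little care because $G$ is only assumed to be a group scheme (possibly non-reduced in positive characteristic), and one must be careful about what "trivial action" means scheme-theoretically; the cleanest route is probably to observe that the action morphism $a : G \times X \to X$ and the projection $p_2 : G \times X \to X$ agree to all orders along $\{e_G\} \times X$ near the fixed point, and then use that $X$ is reduced and separated (so that two morphisms agreeing on a dense open, here obtained by propagating from a neighbourhood of $x$ using homogeneity of the $G$-action on its orbits, must coincide) to conclude $a = p_2$. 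One could also invoke the fact, recalled in the excerpt, that $G_{\ant}$ is the kernel of $\varphi_G$ together with rigidity-type arguments, but the infinitesimal argument is the most self-contained.
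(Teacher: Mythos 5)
Your cycle breaks at (iv)$\Rightarrow$(i). You take $X=\Spec\cO(G)=G/G_{\ant}$ with the action induced by left translation and assert that $\varphi_G(e_G)$ is a fixed point ``because the action factors through $\varphi_G$.'' That factorization says exactly that the action is the one induced by the translation action of the quotient group $G/G_{\ant}$ on itself, i.e.\ $g\cdot\varphi_G(e_G)=\varphi_G(g)$; so $\varphi_G(e_G)$ is fixed if and only if $\varphi_G$ is constant, which is the statement (i) you are trying to prove. A translation action of a nontrivial group on itself has no fixed point, so the argument is circular and the implication is not established. (There is also a secondary issue: condition (iv) is about actions on \emph{varieties}, and you would need to check that $\Spec\cO(G)$ is geometrically integral; but this is moot given the main error.) Since you never prove (iii)$\Rightarrow$(i) independently, the equivalence of (iv) with the other conditions is left open.

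The repair is the route the paper takes: close the loop through (iii) rather than directly to (i). First, (iv)$\Rightarrow$(iii) is immediate, since a finite-dimensional representation $\rho:G\to GL(V)$ yields an action on the affine space $\bA(V)$, a variety with the origin as a fixed point, so (iv) forces $\rho$ to be trivial. Second, (iii)$\Rightarrow$(i): every representation of $G$ factors through the affine quotient $G/G_{\ant}$, which, being an affine group scheme, admits a faithful finite-dimensional representation; its pullback to $G$ is trivial by (iii), hence $\varphi_G$ is trivial and $G=G_{\ant}$ is anti-affine. The remainder of your proposal --- the isomorphism $\cO(G_K)\simeq\cO(G)\otimes_k K$ for (i)$\Leftrightarrow$(ii), the factorization through the affinization for (i)$\Rightarrow$(iii), and the infinitesimal argument on the quotients $\cO_x/\fm_x^n$ together with $\bigcap_n\fm_x^n=\{0\}$ for (iii)$\Rightarrow$(iv) --- is essentially identical to the paper's proof and is fine; your extra care about non-reduced $G$ in the last step is reasonable but not where the real difficulty lies.
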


\begin{proof}
(i)$\Leftrightarrow$(ii) follows from the isomorphism
$\cO(G_K) \simeq \cO(G) \otimes_k K$.

(i)$\Leftrightarrow$(iii) follows from the fact that every linear 
representation of $G$ factors through the affine quotient group scheme
$G/G_{\ant}$.

Since (iv)$\Rightarrow$(iii) is obvious, it remains to show 
(iii)$\Rightarrow$(iv). Let $x$ be a $G$-fixed point in $X$ with local
ring $\cO_x$ and maximal ideal $\fm_x$. Then each quotient
$\cO_x/\fm_x^n$ is a finite-dimensional $k$-vector space on which $G$
acts linearly, and hence trivially. Since $\bigcap_n \fm_x^n = \{ 0 \}$, 
it follows that $G$ fixes $\cO_x$ pointwise. Thus, $G$ acts trivially
on $X$.
\end{proof}

\begin{remark}\label{rem:aff}
The preceding argument yields another criterion for affineness 
of a group scheme; namely, the existence of a faithful action 
on a variety having a fixed point. This was first observed by
Matsumura (see \cite{Ma63}).
\end{remark}

Next, we show that the class of anti-affine groups is stable 
under products, extensions and quotients:

\begin{lemma}\label{lem:ext}
Let $G_1$, $G_2$ be connected group schemes. Then:

\smallskip

\noindent
{\rm (i)} $G_1 \times G_2$ is anti-affine if and only if $G_1$ 
and $G_2$ are both anti-affine.

\smallskip

\noindent
{\rm (ii)} Given an exact sequence of group schemes
$$
1 \longrightarrow G_1 \longrightarrow G \longrightarrow G_2
\longrightarrow 1,
$$
if $G$ is anti-affine, then so is $G_2$. Conversely, if 
$G_1$ and $G_2$ are both anti-affine, then so is $G$.
\end{lemma}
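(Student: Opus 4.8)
The plan is to reduce part~(i) to part~(ii), and to prove~(ii) using the characterization of anti-affineness furnished by Lemma~\ref{lem:equ}(iii). For part~(i), apply part~(ii) to the split exact sequence $1 \to G_1 \to G_1 \times G_2 \to G_2 \to 1$, the last arrow being the projection to the second factor: this shows that $G_1 \times G_2$ anti-affine implies $G_2$ anti-affine, and the analogous sequence with the factors interchanged shows it also implies $G_1$ anti-affine; the converse direction of~(ii) applied to the same sequence gives the reverse implication. (Alternatively, (i) follows at once from the Künneth isomorphism $\cO(G_1 \times G_2) \cong \cO(G_1) \otimes_k \cO(G_2)$.) So it suffices to establish~(ii).

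For the first assertion of~(ii), recall that the quotient homomorphism $\pi : G \to G_2$ is faithfully flat; hence the comorphism $\cO(G_2) \to \cO(G)$ is injective, and $\cO(G) = k$ forces $\cO(G_2) = k$. Equivalently, since $\pi$ is an epimorphism of fppf group sheaves, every finite-dimensional linear representation of $G_2$ becomes, after composition with $\pi$, a representation of the anti-affine group $G$, which is trivial by Lemma~\ref{lem:equ}(iii); hence the original representation of $G_2$ is trivial, and $G_2$ is anti-affine.

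For the converse, assume $G_1$ and $G_2$ are anti-affine and let $\rho$ be a finite-dimensional linear representation of $G$. Restricting $\rho$ to $G_1$ gives a representation of the anti-affine group $G_1$, which is trivial by Lemma~\ref{lem:equ}(iii); thus $G_1$ is contained in the scheme-theoretic kernel of $\rho$. Since $G_1$ is normal in $G$, the homomorphism $\rho$ factors through the quotient $G/G_1 \cong G_2$, producing a representation of the anti-affine group $G_2$; this one is trivial as well, hence so is $\rho$. By Lemma~\ref{lem:equ}(iii), $G$ is anti-affine. (Note that $G$ is automatically connected, being an extension of connected group schemes.)

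The argument is essentially formal once Lemma~\ref{lem:equ}(iii) is in hand; the only external inputs are the faithful flatness of quotient homomorphisms of group schemes and the universal property of the quotient $G/G_1$ — namely, that a homomorphism killing the normal closed subgroup scheme $G_1$ factors through $G/G_1 = G_2$ — both of which are standard over a field, see~\cite{SGA3}. Accordingly, I do not anticipate a genuine obstacle: the content of the lemma is the bookkeeping of these facts against criterion~(iii), together with the observation that (i) is a special case of (ii).
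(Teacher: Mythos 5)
Your proof is correct, and for the converse direction of (ii) it takes a route genuinely different from the paper's. The paper works throughout with the coordinate ring: it identifies $\cO(G_2)$ with the invariant subalgebra $\cO(G)^{G_1}$ (which gives the first assertion of (ii), essentially your faithful-flatness injection), and for the converse it observes that $\cO(G)$ is a union of finite-dimensional $G_1$-submodules, each of which is trivial by Lemma~\ref{lem:equ}, so that $\cO(G) = \cO(G)^{G_1} \simeq \cO(G_2) = k$. You instead dualize: you take an arbitrary finite-dimensional representation of $G$, kill $G_1$ by Lemma~\ref{lem:equ}(iii), factor through $G/G_1 \cong G_2$ by the universal property of the fppf quotient, kill the induced representation of $G_2$, and conclude via the implication (iii)$\Rightarrow$(i) of Lemma~\ref{lem:equ}. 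Your argument trades the paper's input (local finiteness of the regular representation of $G_1$ on $\cO(G)$) for two others (the universal property of quotients by normal closed subgroup schemes, and the full equivalence (i)$\Leftrightarrow$(iii) rather than just (i)$\Rightarrow$(iii)); all of these are standard, so the two proofs are of comparable weight, and the paper's has the mild advantage of exhibiting the isomorphism $\cO(G_2) \simeq \cO(G)$ explicitly. Your treatment of (i) via the K\"unneth isomorphism coincides with the paper's; the reduction of (i) to (ii) is also fine.
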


\begin{proof}
(i) follows from the isomorphism 
$\cO(G_1 \times G_2) \simeq \cO(G_1) \otimes_k \cO(G_2)$.

(ii) The isomorphism $\cO(G_2) \simeq \cO(G)^{G_1}$
(the algebra of invariants under the action of $G_1$ on $\cO(G)$ 
via left multiplication) yields the first assertion.

If $G_1$ is anti-affine, then its action on $\cO(G)$ is trivial
(as $\cO(G)$ is a union of finite-dimensional $k$-$G_1$-submodules, 
and $G_1$ acts trivially on any such module by Lemma \ref{lem:equ}). 
Thus, $\cO(G_2) \simeq \cO(G)$ which implies the second assertion.
\end{proof}

Anti-affineness is also stable under isogenies:

\begin{lemma}\label{lem:iso}
Let $f : G \to H$ be an isogeny of connected commutative algebraic 
groups. Then $G$ is anti-affine if and only if so is $H$.
\end{lemma}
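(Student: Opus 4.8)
The plan is to reduce the statement to the already-established stability properties (Lemmas \ref{lem:ext} and \ref{lem:iso}'s siblings) by analyzing the kernel $N := \ker f$, which is a finite group scheme, central and commutative. The key observation is that for an isogeny one direction is immediate from Lemma \ref{lem:ext}(ii): since $N$ is finite, it is affine, hence trivially \emph{not} anti-affine unless trivial, so that lemma alone does not directly apply to passing from $H$ to $G$. Instead, I would work with the dual isogeny. Because $G$ and $H$ are connected commutative algebraic groups and $f$ is an isogeny, there is a "complementary" isogeny $g : H \to G$ with $g \circ f = [n]_G$ and $f \circ g = [n]_H$ for a suitable positive integer $n$ (for instance $n = \operatorname{ord}(N)$ when $N$ is killed by $n$; in general one uses that multiplication by $n$ factors through $f$ once $N \subset G[n]$). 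Thus it suffices to prove the assertion for the multiplication map $[n] : G \to G$, i.e.\ to show that $G$ is anti-affine if and only if $[n]_*$-image, namely $[n](G) = G$, is — which is vacuous — so the real content is: $G$ anti-affine $\iff$ $G$ anti-affine \emph{via} the factorization through $f$ and $g$.

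Concretely, here is the clean argument. Suppose first $G$ is anti-affine. Then $H = f(G)$ is a quotient of $G$, so by Lemma \ref{lem:ext}(ii) (first assertion: a quotient of an anti-affine group is anti-affine) $H$ is anti-affine. Conversely, suppose $H$ is anti-affine. Pick $n$ with $N \subset G[n]$, so that $[n]_G$ factors as $[n]_G = g \circ f$ for some homomorphism $g : H \to G$. Then $g(H) = g(f(G)) = [n]_G(G)$, and since $G$ is connected and commutative, $[n]_G$ is surjective (it is an isogeny, as its kernel $G[n]$ is finite), so $g(H) = G$. Now $g(H)$ is a quotient of the anti-affine group $H$, hence anti-affine by Lemma \ref{lem:ext}(ii) again. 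Therefore $G$ is anti-affine.

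The main point to get right is the existence of the complementary homomorphism $g$ with $g \circ f = [n]_G$: this is the standard fact that if $\ker f$ is annihilated by $n$ then $[n]_G$ kills $\ker f$ and hence descends along the faithfully flat quotient map $f : G \to G/\ker f = H$. For this one needs $f$ to be a quotient morphism in the category of group schemes, which holds since $f$ is an isogeny of algebraic groups (so $\ker f$ is a finite normal — here central — subgroup scheme and $G \to G/\ker f$ is faithfully flat). One must also check $[n]_G$ is surjective; since $G$ is connected and $G[n] = \ker[n]_G$ is finite (as $G$ has finite-dimensional Lie algebra and is commutative, $[n]$ is an isogeny), surjectivity of $[n]_G$ follows from dimension count together with connectedness of $G$. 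With these two ingredients the rest is a two-line application of Lemma \ref{lem:ext}(ii), so I do not anticipate any serious obstacle; the only mild subtlety is handling the case where $\operatorname{char} k = p > 0$ and $\ker f$ is non-reduced, but the factorization of $[n]_G$ through $f$ is insensitive to reducedness of the kernel.
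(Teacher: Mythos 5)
Your forward direction coincides with the paper's (quotients of anti-affine groups are anti-affine, Lemma \ref{lem:ext}(ii)). The converse, however, has a genuine gap at the step ``$[n]_G$ is surjective''. You justify this by claiming $G[n]=\ker [n]_G$ is finite ``as $G$ has finite-dimensional Lie algebra and is commutative'', but that is false in characteristic $p>0$: for $G=\bG_a$ one has $[p]_G=0$, and more generally $[n]_G$ fails to be surjective whenever $G$ has a nontrivial unipotent part and $p\mid n$. You cannot escape this by choosing $n$ prime to $p$, since $\ker f$ may be killed only by multiples of $p$ (e.g.\ $\ker f\simeq \alpha_p$, $\mu_p$ or $\bZ/p$). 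This is not a side issue but the crux of the lemma in the direction you are proving: the paper invokes Lemma \ref{lem:iso} (in the remark following Proposition \ref{prop:vec}) precisely to show that a group isogenous to $U\times A$, with $U$ a nontrivial vector group in characteristic $p$, cannot be anti-affine --- exactly the situation in which your $[n]_G$ is not surjective, so as written your argument begs the question there.

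The paper takes a different route: $f$ induces an isogeny $G/G_{\ant}\to H/I$ with $G/G_{\ant}$ affine, so $H/I$ is both affine and anti-affine, hence trivial; thus $f$ restricts to an isogeny $G_{\ant}\to H$, forcing $\dim G_{\ant}=\dim H=\dim G$ and $G_{\ant}=G$. Your setup can be repaired in its own spirit: from $g\circ f=[n]_G$ one deduces $f\circ g=[n]_H$ (compose with the epimorphism $f$), and since $H$ is anti-affine, $[n]_H$ is an isogeny --- this is Lemma \ref{lem:end}, whose proof uses only Lemma \ref{lem:ext}(ii) and so could be placed before the present lemma. Hence $\ker g\subset H[n]$ is finite, $g(H)$ is a connected subgroup scheme of $G$ of dimension $\dim H=\dim G$, so $g(H)=G$ by irreducibility of the smooth connected $G$, and $G$ is anti-affine as a quotient of $H$. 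The key point you must add is that the needed surjectivity comes from divisibility of the \emph{anti-affine} group $H$, not from a general property of $G$.
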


\begin{proof}
If $G$ is anti-affine, then so is $H$ by Lemma \ref{lem:ext} (ii).
For the converse, note that $f$ induces an isogeny 
$G_{\ant} \to I$, where $I$ is a subgroup scheme of $H$,
and in turn an isogeny $G/G_{\ant} \to H/I$.
As $G/G_{\ant}$ is affine, so is $H/I$. But $H/I$
is also anti-affine, and hence is trivial. Thus, $f$ restricts to 
an isogeny $G_{\ant} \to H$. In particular, 
$\dim(G_{\ant}) = \dim(H) = \dim(G)$, whence $G_{\ant} = G$. 
\end{proof}

\subsection{Rigidity}
\label{subsec:rig}

In this subsection, we generalize some classical properties of 
abelian varieties to the setting of anti-affine groups. Our results
are implicit in \cite{Ro56,Se58a}; we give full proofs for the sake of
completeness. 

\begin{lemma}\label{lem:rig}
Let $G$ be an anti-affine algebraic group, and $H$ a connected group 
scheme.

\smallskip

\noindent
{\rm (i)} Any morphism (of schemes) $f : G \to H$
such that $f(e_G) = e_H$ is a homomorphism (of group schemes), 
and factors through $H_{\ant}$; in particular, through the centre of 
$H$.

\smallskip

\noindent
{\rm (ii)} The abelian group (for pointwise multiplication) of 
homomorphisms $f : G \to H$ is free of finite rank.
\end{lemma}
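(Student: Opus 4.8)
The plan is to reduce both statements to the behaviour of morphisms \emph{out of} anti-affine groups into affine varieties (which are constant, since $\cO(G)=k$) and of morphisms \emph{into} abelian varieties (rigidity). I begin with (i). Given $f:G\to H$ with $f(e_G)=e_H$, I first compose with the affinization $\varphi_H:H\to\Spec\cO(H)$: as $\cO(G)=k$, the composite corresponds to a $k$-algebra map $\cO(H)\to k$, hence is the constant morphism with value $\varphi_H(e_H)$. Since $\varphi_H$ is the quotient by $H_{\ant}$, its fibre through $e_H$ equals $H_{\ant}$, so $f(G)\subseteq H_{\ant}$, and, $G$ being reduced, $f$ factors through the closed subgroup scheme $H_{\ant}$. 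Thus it suffices to treat $H=H_{\ant}$, a commutative connected anti-affine algebraic group that I write additively as $B$; here $f(0)=0$, and, $H_{\ant}$ being central, the last clause of (i) is automatic once $f$ is a homomorphism.

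To prove $f:G\to B$ is a homomorphism I pass to the abelian variety $A(B)=B/B_{\aff}$ and set $g:=\alpha_B\circ f$. For each $y\in G$ the morphism $G_{\aff}\to A(B)$, $z\mapsto g(y+z)$, goes from a connected affine algebraic group to an abelian variety, hence is constant (equal to $g(y)$); so $g$ is constant on the cosets of $G_{\aff}$ and descends to a morphism $A(G)\to A(B)$ fixing the origin, which is a homomorphism by rigidity of abelian varieties. Hence $g$ is a homomorphism. Now consider $c:G\times G\to B$, $c(x,y):=f(x+y)-f(x)-f(y)$; applying $\alpha_B$ gives $0$, so $c$ takes values in $\ker\alpha_B=B_{\aff}$, which is affine. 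Since $G\times G$ is anti-affine by Lemma \ref{lem:ext}(i), $c$ is constant, and $c(0,0)=0$; therefore $c\equiv 0$, i.e.\ $f$ is a homomorphism, which finishes (i).

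For (ii), by (i) the morphisms $G\to H$ fixing the neutral elements are precisely the elements of $\Hom(G,H)$, all factoring through the commutative group $H_{\ant}=:B$; pointwise multiplication makes $\Hom(G,H)=\Hom(G,B)$ an abelian group. For $f\in\Hom(G,B)$ the image $f(G_{\aff})$ is a connected affine subgroup of $B$, hence is contained in $B_{\aff}$ (a connected affine subgroup of an abelian variety is trivial, being at once affine and complete); so the homomorphism $\alpha_B\circ f$ kills $G_{\aff}$ and factors as $\bar f\circ\alpha_G$ with $\bar f\in\Hom(A(G),A(B))$. The map $f\mapsto\bar f$ is a group homomorphism, and it is injective: if $\bar f=0$ then $f$ has image in $B_{\aff}$, so $f$ is a morphism from an anti-affine group to an affine variety, hence is constant and equal to $0$. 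Thus $\Hom(G,H)$ embeds into $\Hom(A(G),A(B))$, which is free of finite rank by the classical theory of abelian varieties (see \cite{Mu70}); and a subgroup of a free abelian group of finite rank has the same property.

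The only step that is not formal bookkeeping with the affinization and Lemmas \ref{lem:equ} and \ref{lem:ext} is the assertion that a connected affine algebraic group admits no nonconstant morphism to an abelian variety, which is what lets $g$ descend along $\alpha_G$; I would obtain it by base change to $\kb$, where connected affine algebraic groups are unirational, together with the facts that rational maps from smooth varieties to abelian varieties extend to morphisms and that $\bA^n$ carries no nonconstant morphism to an abelian variety. The remaining care — promoting the set-theoretic inclusions $f(G)\subseteq H_{\ant}$ and $c(G\times G)\subseteq B_{\aff}$ to scheme-theoretic factorizations — uses only reducedness of $G$ and of $G\times G$.
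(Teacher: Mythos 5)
Your proof is correct and follows essentially the same strategy as the paper: rigidity of abelian varieties to handle the composition with $\alpha$, anti-affineness of $G\times G$ to kill the defect map landing in the affine part, the affinization $\varphi_H$ to factor through $H_{\ant}$, and the embedding of $\Hom(G,H)$ into $\Hom(A(G),A(H))$ for finiteness of the rank. The only differences are cosmetic: you factor through $H_{\ant}$ first rather than last, and you re-derive in place the two lemmas of Conrad that the paper cites (that a pointed morphism from a group variety to an abelian variety is a homomorphism, and that homomorphisms descend to the Albanese quotients), at the cost of having to justify the descent of $g$ along $\alpha_G$ and of needing the reduction to $\kb$ (which the paper makes explicit in (ii)) so that $G_{\aff}$ is smooth.
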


\begin{proof}
(i) Consider the quotient homomorphism (\ref{eqn:alb})
$$
\alpha_H : H \to H/H_{\aff} =: A(H).
$$ 
By rigidity of abelian varieties (see e.g. \cite[Lem.~2.2]{Co02}), 
the composition $\alpha_H f : G \to A(H)$ is a homomorphism. 
Equivalently, the morphism
$$
F : G \times G \longrightarrow H, \quad
(x,y) \longmapsto f(xy) f(x)^{-1} f(y)^{-1}
$$
factors through the affine subgroup scheme $H_{\aff}$. 
As $G \times G$ is anti-affine, and $F(e_G,e_G) = e_H$, it follows 
that $F$ factors through $e_H$; thus, $f$ is a homomorphism. 

The composition of $f$ with the homomorphism (\ref{eqn:aff})
$$
\varphi_H : H \to H/H_{\ant}
$$
is a homomorphism from $G$ to an affine group scheme. Hence 
$\varphi_H f$ factors through $e_{H/H_{\ant}}$, that is, 
$f$ factors through $H_{\ant}$.

(ii) We may assume that $k$ is algebraically closed; then
$G_{\aff}$ is a connected affine algebraic group. By
\cite[Lem.~2.3]{Co02}, it follows that any homomorphism 
$f : G \to H$ fits into a commutative square
$$
\CD
G @>{f}>> H \\
@V{\alpha_G}VV @V{\alpha_H}VV \\
A(G) @>{\alpha(f)}>> A(H) \\
\endCD
$$
where $\alpha(f)$ is a homomorphism. This yields a homomorphism
$$
\alpha : \Hom(G,H) \longrightarrow \Hom(A(G),A(H)), 
\quad f \longmapsto \alpha(f).
$$
If $\alpha(f) = 0$, then $f$ factors through $H_{\aff}$, and hence
is trivial. Thus, $\Hom(G,H)$ is identified to a subgroup of
$\Hom(A(G),A(H))$; the latter is free of finite rank by
\cite[p.~176]{Mu70}.
\end{proof}

Next, we show that anti-affine groups are ``divisible'' (this property 
is the main ingredient of the classification of anti-affine groups in 
positive characteristics):

\begin{lemma}\label{lem:end}
Let $G$ be an anti-affine algebraic group, and $n$ a non-zero integer. 
Then the multiplication map $n_G : G \to G$, $x \mapsto nx$ is an isogeny.
\end{lemma}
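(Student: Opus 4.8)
The plan is to prove that $n_G$ is \emph{surjective}; finiteness of its kernel will then come for free, since for a surjective homomorphism $n_G : G \to G$ of algebraic groups of the same (finite) dimension we get $\dim \ker(n_G) = \dim G - \dim n_G(G) = 0$, so $\ker(n_G)$ is finite, and a surjective homomorphism with finite kernel is an isogeny. It is worth noting in advance \emph{not} to attack this via the differential: $d(n_G) = n\cdot\id_{\Lie G}$ is an isomorphism — and hence $n_G$ is \'etale, thus an isogeny — only when $\gcd(n,\operatorname{char} k)=1$, and the interesting case is precisely $\operatorname{char} k = p$ dividing $n$, where $d(n_G)=0$ and $\ker(n_G)$ may have positive infinitesimal length.

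So, first I would set $H := n_G(G)$, the image of the homomorphism $n_G$; it is a connected algebraic subgroup of $G$ (a homomorphic image of the smooth connected group $G$, hence isomorphic to $G/\ker(n_G)$). Form the quotient $Q := G/H$ with quotient map $\pi : G \to Q$. By Lemma~\ref{lem:ext}(ii), $Q$ is anti-affine, in particular smooth and connected. Moreover $Q$ is killed by $n$: since $n_G$ factors through $H = \ker(\pi)$, we have $\pi \circ n_G = 0$; but $\pi \circ n_G = n_Q \circ \pi$, and $\pi$ is a faithfully flat epimorphism, so $n_Q = 0$.

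Next I would show that an anti-affine group $Q$ with $n_Q = 0$, for $n\neq 0$, must be trivial. Applying the Albanese quotient $\alpha_Q : Q \to A(Q) = Q/Q_{\aff}$ and using that $\alpha_Q$ is an epimorphism with $\alpha_Q \circ n_Q = 0 = n_{A(Q)} \circ \alpha_Q$, we get $n_{A(Q)} = 0$. But $A(Q)$ is an abelian variety, and multiplication by a non-zero integer $n$ on an abelian variety is a (finite, surjective) isogeny of degree $n^{2\dim A(Q)}$; thus $n_{A(Q)} = 0$ forces $\dim A(Q) = 0$, i.e.\ $A(Q) = 0$. Hence $Q = Q_{\aff}$ is affine; being also anti-affine, $Q = \Spec \cO(Q) = \Spec k$ is trivial. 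Therefore $H = G$, $n_G$ is surjective, and the conclusion follows as explained above.

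The main obstacle is structural rather than computational: one must resist trying to establish injectivity or \'etaleness of $n_G$ directly, and instead prove surjectivity first — routing through Lemma~\ref{lem:ext}(ii) and the Albanese variety so that the argument is uniform in the characteristic, in particular covering $p \mid n$ — after which finiteness of the kernel is automatic by the dimension count. The only genuine external input is the classical fact that multiplication by $n \neq 0$ on an abelian variety is an isogeny.
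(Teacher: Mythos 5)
Your proposal is correct and follows essentially the same route as the paper: Brion also passes to the cokernel $H$ of $n_G$, notes $n_H = 0$ forces the abelian variety $H/H_{\aff}$ to vanish so that $H$ is affine, and concludes $H$ is trivial since it is anti-affine as a quotient of $G$. Your write-up merely spells out the details the paper leaves implicit (why $n_Q=0$, and the dimension count giving finiteness of the kernel).
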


\begin{proof}
Let $H$ denote the cokernel of $n_G$; then $n_H$ is trivial. 
Hence the abelian variety $H/H_{\aff}$ is trivial, i.e., 
$H$ is affine. But $H$ is anti-affine as a quotient of $G$, so that 
$H$ is trivial.
\end{proof}

\section{Structure}

\subsection{Semi-abelian varieties}
\label{subsec:sab}

Throughout this section, we consider connected group schemes $G$ 
equipped with an isomorphism
$$
\CD
\alpha : G/G_{\aff} @>{\simeq}>> A \\
\endCD
$$
where $A$ is a prescribed abelian variety.
We then say that $G$ is a \emph{group scheme over $A$}. 

Our aim is to classify anti-affine groups over $A$, up to 
isomorphism of group schemes over $A$ (in an obvious sense). We 
begin with the case where $G_{\aff}$ is a torus, i.e., $G$ is a 
semi-abelian variety. Then $G$ is obtained as an extension of
algebraic groups
\begin{equation}\label{eqn:sab}
\CD
1 @>>> T @>>> G @>{\alpha}>> A @>>> 0
\endCD
\end{equation}
where $T$ is a torus. Moreover, as $T_{k_s} := T \otimes_k k_s$ is
split, we have a decomposition of quasi-coherent sheaves on $A_{k_s}$: 

\begin{equation}\label{eqn:dec}
\alpha_*(\cO_{G_{k_s}}) = 
\bigoplus_{\lambda \in \Lambda} \cL_{\lambda}
\end{equation}
where $\Lambda$ denotes the character group of $T$
(so that $\Lambda$ is a $\Gamma_k$-lattice), and 
$\cL_{\lambda}$ consists of all eigenvectors of $T_{k_s}$ in 
$\alpha_*(\cO_{G_{k_s}})$ with weight $\lambda$. Each 
$\cL_{\lambda}$ is an invertible sheaf on $A_{k_s}$, 
algebraically equivalent to $0$. Thus, 
$\cL_{\lambda}$ yields a $k_s$-rational point $c(\lambda)$ 
of the dual abelian variety $A^{\vee}$. Moreover, the map
\begin{equation}\label{eqn:c}
c : \Lambda \to A^{\vee}(k_s), 
\quad \lambda \mapsto c(\lambda)
\end{equation}
is a $\Gamma_k$-equivariant homomorphism, which classifies the 
extension (\ref{eqn:sab}) up to isomorphism of extensions 
(as follows e.g. from \cite[VII.3.16]{Se59}). In other words,
the extensions (\ref{eqn:sab}) are classified by the homomorphisms 
$T^{\vee} \to A^{\vee}$, where $T^{\vee}$ denotes the Cartier dual
of $T$. 

\begin{proposition}\label{prop:sab}
{\rm (i)} With the preceding notation, $G$ is anti-affine if 
and only if $c$ is injective.

\smallskip

\noindent
{\rm (ii)} The isomorphism classes of anti-affine semi-abelian 
varieties over $A$ correspond bijectively to the 
sub-$\Gamma_k$-lattices of $A^{\vee}(k_s)$.
\end{proposition}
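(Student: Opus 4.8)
The plan is to deduce both parts from the description of the sheaf decomposition \eqref{eqn:dec} together with the basic stability properties (Lemmas \ref{lem:ext} and \ref{lem:iso}) and the divisibility property (Lemma \ref{lem:end}). For part (i), first I would reduce to the case $k = k_s$: by Lemma \ref{lem:equ}, $G$ is anti-affine iff $G_{k_s}$ is, and $c$ is injective iff the base-changed homomorphism is, so the general statement follows from the algebraically-split case. Over $k_s$, the key computation is that $\cO(G) = H^0(A, \alpha_*\cO_G) = \bigoplus_{\lambda \in \Lambda} H^0(A, \cL_\lambda)$. Since each $\cL_\lambda$ is algebraically equivalent to $0$, it has a nonzero section iff it is trivial, i.e.\ iff $c(\lambda) = 0$ in $A^\vee(k_s)$; and in that case $H^0(A, \cL_\lambda) = H^0(A, \cO_A) = k_s$ is one-dimensional. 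Hence $\cO(G) = k_s$ exactly when the only $\lambda$ with $c(\lambda) = 0$ is $\lambda = 0$, that is, when $\ker(c) = 0$. I expect the only mild subtlety here is justifying the passage from $\cO(G)$ to the cohomology of $\alpha_*\cO_G$ and the eigenspace decomposition of global sections, but this is standard since $\alpha$ is affine (it is a torus bundle) and $T_{k_s}$ is linearly reductive-ish for the relevant purpose (the weight decomposition of sections is simply the $\Gamma(A,-)$-image of \eqref{eqn:dec}).

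For part (ii), I would argue that assigning to an anti-affine semi-abelian variety $G$ over $A$ the image $c(\Lambda) \subseteq A^\vee(k_s)$ gives the desired bijection. By part (i), $c$ is injective, so $c$ identifies $\Lambda$ with a subgroup of $A^\vee(k_s)$; since $\Lambda$ is a $\Gamma_k$-lattice and $c$ is $\Gamma_k$-equivariant, $c(\Lambda)$ is a sub-$\Gamma_k$-lattice of $A^\vee(k_s)$ (finitely generated, free since it is torsion-free being isomorphic to $\Lambda$, and $\Gamma_k$-stable). Conversely, given any sub-$\Gamma_k$-lattice $M \subseteq A^\vee(k_s)$, I would let $T$ be the torus with character group $M$ (equipped with the induced $\Gamma_k$-action, so $T$ is a $k$-torus), and let $c : M \hookrightarrow A^\vee(k_s)$ be the inclusion, viewed as a homomorphism $T^\vee \to A^\vee$; this classifies an extension \eqref{eqn:sab}, hence a semi-abelian variety $G$ over $A$, which is anti-affine by part (i) since $c$ is injective. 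That these two constructions are mutually inverse up to isomorphism of group schemes over $A$ follows from the fact recalled before Proposition \ref{prop:sab} that the homomorphism $c : T^\vee \to A^\vee$ classifies the extension \eqref{eqn:sab} up to isomorphism of extensions: an isomorphism of semi-abelian varieties over $A$ inducing the identity on $A$ and on the torus part corresponds to equality of the classifying homomorphisms, and a change of the identification of $T$ with the torus having character lattice $M$ is absorbed into the ``up to isomorphism'' clause.

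The main obstacle, as I see it, is not any single hard step but rather being careful about what ``isomorphism of group schemes over $A$'' means and checking it matches ``equality of sub-$\Gamma_k$-lattices $c(\Lambda)$'' on the nose. Concretely: two extensions \eqref{eqn:sab} with torus parts $T_1, T_2$ give $A$-isomorphic $G_1 \cong G_2$ iff there is an isomorphism $T_1 \cong T_2$ carrying $c_1$ to $c_2$; on character lattices this says $\Lambda_1 \cong \Lambda_2$ as $\Gamma_k$-lattices via an isomorphism intertwining $c_1$ and $c_2$, which—once $c_i$ are injective—is precisely the statement $c_1(\Lambda_1) = c_2(\Lambda_2)$ as subgroups of $A^\vee(k_s)$. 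I would also note the reduction point that $G_{\aff} = T$ here, so ``group scheme over $A$'' in the sense of Subsection \ref{subsec:sab} is automatic, and that anti-affineness forces $G$ to be a semi-abelian variety only under the standing hypothesis of this subsection ($G_{\aff}$ a torus)—so no extra argument is needed to stay within the class. Everything else is bookkeeping with the classification of torus extensions of $A$ by homomorphisms to $A^\vee$.
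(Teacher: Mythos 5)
Your proposal is correct and follows essentially the same route as the paper: compute $\cO(G_{k_s})$ via the weight decomposition \eqref{eqn:dec}, characterize anti-affineness by the vanishing of $H^0(A_{k_s},\cL_\lambda)$ for $\lambda\neq 0$, and reduce (ii) to the observation that isomorphism over $A$ amounts to composing $c$ with a $\Gamma_k$-equivariant automorphism of $\Lambda$, i.e.\ (given injectivity) to equality of the images $c(\Lambda)$. The only difference is that you quote as standard the fact that a nontrivial, algebraically trivial invertible sheaf on $A_{k_s}$ has no nonzero sections, whereas the paper supplies a short self-contained argument for it (restricting to an integral curve meeting the support of a putative effective divisor properly).
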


\begin{proof}
(i) By the decomposition (\ref{eqn:dec}), we have
$$
\cO(G_{k_s}) = H^0 \big( A_{k_s},\alpha_*(\cO_{G_{k_s}}) \big) 
= \bigoplus_{\lambda \in \Lambda} H^0(A_{k_s},\cL_{\lambda})
$$
and of course $H^0(A_{k_s}, \cL_0) = \cO(A_{k_s}) = k_s$. 
Thus, $G$ is anti-affine if and only if 
$H^0(A_{k_s},\cL_{\lambda}) = 0$ for all $\lambda \neq 0$.

On the other hand, $H^0(A_{k_s},\cL) = 0$ for any invertible 
sheaf $\cL$ on $A_{k_s}$ which is algebraically trivial but 
non-trivial. (Otherwise, $\cL_{\kb} = \cO_{A_{\kb}}(D)$ for some 
non-zero effective divisor $D$ on $A_{\kb}$. We may find 
an integral curve $C$ in $A_{\kb}$ that meets properly
$\Supp(D)$. Then the pull-back of $\cL$ to $C$ has positive degree, 
contradicting the algebraic triviality of $\cL$).

Thus, $G$ is anti-affine if and only if $\cL_{\lambda}$ is 
non-trivial for any $\lambda \ne 0$.

(ii) Given two injective and $\Gamma_k$-equivariant homomorphisms
$$
c_1,c_2 : \Lambda \longrightarrow A^{\vee}(k_s),
$$ 
the corresponding anti-affine groups are isomorphic over $A$ 
if and only if the corresponding extensions differ by an 
automorphism of $T$, i.e., there exists a $\Gamma_k$-equivariant 
automorphism $f$ of $\Lambda$ such that $c_2 = c_1 f$. 
This amounts to the equality $c_1(\Lambda) = c_2(\Lambda)$.
\end{proof}

In positive characteristics, the preceding construction yields 
all anti-affine groups:

\begin{proposition}\label{prop:pos}
Any anti-affine algebraic group over a field of characteristic 
$p > 0$ (resp.~over a finite field) is a semi-abelian variety
(resp.~an abelian variety). 
\end{proposition}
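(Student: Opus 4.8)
The plan is to reduce the statement to the semi-abelian case already handled in Proposition \ref{prop:sab}, by showing that in characteristic $p>0$ the affine part $G_{\aff}$ of an anti-affine group $G$ must be a torus. Recall that over $\kb$ a connected affine algebraic group $G_{\aff}$ is an almost-direct product of a torus and a connected unipotent group; more precisely, $G_{\aff}$ has a largest normal connected unipotent subgroup $U$ (its unipotent radical), and $G_{\aff}/U$ is a torus. So it suffices to prove that $U$ is trivial. I may work over $\kb$ throughout, since anti-affineness descends and ascends along field extensions by Lemma \ref{lem:equ}, and $G_{\aff}$ base-changes to $(G_{\kb})_{\aff}$.

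First I would observe that $U$ is normal in $G$, not merely in $G_{\aff}$: indeed $G_{\aff}$ is normal in $G$, and $U$ is the unipotent radical of $G_{\aff}$, hence characteristic in $G_{\aff}$ and therefore normal in $G$ (alternatively, $G_{\ant} = G$ is central by the remarks in the introduction, so conjugation by $G$ on $G_{\aff}$ is trivial). Then $G/U$ is again a connected group scheme, and it is anti-affine by Lemma \ref{lem:ext}(ii) since it is a quotient of the anti-affine group $G$. Moreover $(G/U)_{\aff} = G_{\aff}/U$ is a torus, so $G/U$ is a semi-abelian variety over $A := A(G)$, and by Proposition \ref{prop:sab} it is classified by an injective $\Gamma_k$-equivariant homomorphism from its character lattice into $A^{\vee}(\kb)$. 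The goal is now to show $U = \{e\}$, i.e. that one cannot "enlarge" an anti-affine semi-abelian variety by a non-trivial unipotent extension in characteristic $p$.

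The key step — and the main obstacle — is to rule out non-trivial unipotent $U$. Here is where characteristic $p$ enters, via Lemma \ref{lem:end}: since $G$ is anti-affine, multiplication by $p$, namely $p_G : G \to G$, is an isogeny, in particular surjective with finite kernel. Now I would use that $U$ is a connected commutative (being central, hence commutative) unipotent group in characteristic $p$: such a group is killed by $p^N$ for some $N \geq 1$, so $p^N_U$ is trivial and hence $\ker(p^N_G) \supseteq U$. But $\ker(p^N_G)$ is finite because $p^N_G$ is an isogeny (a composite of isogenies $p_G$ is an isogeny), while $U$ is connected of dimension $\dim U$; a finite group scheme contains a connected positive-dimensional subgroup only if the latter is trivial. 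Therefore $\dim U = 0$, and since $U$ is connected, $U = \{e\}$. This forces $G_{\aff}$ to be a torus, so $G$ is a semi-abelian variety as claimed.

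For the parenthetical finite-field statement, I would invoke the additional input that over a finite field $\bF_q$ an anti-affine group $G$, now known to be a semi-abelian variety, is classified by a $\Gamma_k$-sublattice $\Lambda \hookrightarrow A^{\vee}(k_s)$; but $A^{\vee}(k_s) = \bigcup_n A^{\vee}(\bF_{q^n})$ is a torsion group, since each $A^{\vee}(\bF_{q^n})$ is finite. A torsion group contains no non-trivial free abelian subgroup, so $\Lambda = 0$, meaning the torus $T$ is trivial and $G = A$ is an abelian variety. (Alternatively one reduces directly: the semi-abelian extension (\ref{eqn:sab}) over a finite field is split after a finite base change because $\mathrm{Ext}^1(A,\bG_m) = A^{\vee}(k)$ is finite, and an anti-affine $G$ with a non-trivial split torus quotient is impossible.) I expect the unipotent-vanishing argument to be the crux; the torsion argument for finite fields is then essentially formal given Proposition \ref{prop:sab}.
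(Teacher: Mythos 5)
Your proposal is correct and follows essentially the same route as the paper: both hinge on Lemma \ref{lem:end} applied to $n=p$ to show that $G$ has too little $p$-power torsion to contain a non-trivial unipotent subgroup, and both dispose of the finite-field case by noting that $A^{\vee}(k_s)$ is torsion so any sublattice is trivial. The only (cosmetic) difference is that you bound the unipotent radical $U$ directly inside the finite kernel of $p^N_G$, whereas the paper argues that $G(\kb)$ has finitely many points of order $p$ and cites SGA3 to conclude that $(G_{\kb})_{\aff}$ is a torus.
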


\begin{proof}
The multiplication map $p_G$ is an isogeny by Lemma
\ref{lem:end}. In particular, the group $G(\kb)$ contains only 
finitely many points of order $p$. Thus, every unipotent 
subgroup of $G_{\kb}$ is trivial.  By 
\cite[Exp.~XVII, Thm.~7.2.1]{SGA3}, it follows that 
$(G_{\kb})_{\aff}$ is a torus, i.e., $G_{\kb}$ is a semi-abelian 
variety. Hence $G$ is a semi-abelian variety as well, see 
\cite[p.~178]{BLR90}.   

If $k$ is finite (so that $k_s = \kb$, then the group $A^{\vee}(k_s)$
is the union of its subgroups $A^{\vee}(K)$, where $K$ ranges over all 
finite subfields of $k_s$ that contain $k$. As a consequence, every
point of $A^{\vee}(k_s)$ has finite order. Hence any sublattice of
$A^{\vee}(k_s)$ is trivial.
\end{proof}

\subsection{Vector extensions of abelian varieties}
\label{subsec:vec}

In this subsection, we assume that $k$ has characteristic $0$.
Recall that every abelian variety $A$ has a 
\emph{universal vector extension} $E(A)$ by the $k$-vector space
$H^1(A,\cO_A)^*$ regarded as an additive group. In other words, 
any extension $G$ of $A$ by a vector group $U$ fits into a unique 
commutative diagram
\begin{equation}\label{eqn:cd}
\CD
0 @>>> H^1(A,\cO_A)^* @>>> E(A) @>>>     A        @>>> 0 \\
&   &  @V{\gamma}VV        @VVV @V{\id}VV \\
0 @>>> U              @>>> G   @>>>      A        @>>> 0 \\
\endCD
\end{equation}
(see \cite{Ro58, Se59, MM74}). Note that 
$E(A)_{\aff} = H^1(A,\cO_A)^*$. 

\begin{proposition}\label{prop:vec}
{\rm (i)} $E(A)$ is anti-affine.

\smallskip

\noindent
{\rm (ii)} With the notation of the diagram (\ref{eqn:cd}), 
$G$ is anti-affine if and only if the classifying map 
$\gamma : H^1(A,\cO_A)^* \to U$ is surjective. 

\smallskip

\noindent
{\rm (iii)} The anti-affine groups over $A$ obtained as vector 
extensions are classified by the subspaces of the $k$-vector space
$H^1(A,\cO_A)$.
\end{proposition}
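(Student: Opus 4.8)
The plan is to prove the three parts of Proposition~\ref{prop:vec} in order, leveraging the universal property of $E(A)$ and the cohomological description of vector extensions.

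For part~(i), the key idea is to reduce the anti-affineness of $E(A)$ to a vanishing statement for the affinization. Suppose $E(A)$ is not anti-affine; then $E(A)_{\ant} \subsetneq E(A)$, and the quotient $E(A)/E(A)_{\ant}$ is a nontrivial affine algebraic group. Since $E(A)_{\aff} = H^1(A,\cO_A)^*$ is a vector group and $E(A)/E(A)_{\aff} = A$ is an abelian variety, the image of $E(A)_{\ant}$ in $A$ must be all of $A$ (it is a subvariety, and $A$ has no nontrivial affine quotients), so $E(A)_{\ant} + H^1(A,\cO_A)^* = E(A)$, i.e., $E(A)_{\ant}$ surjects onto $A$. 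Thus the quotient $E(A)/E(A)_{\ant}$ is a quotient vector group $U_0$ of $H^1(A,\cO_A)^*$, obtained by pushing out the universal extension along a nonzero linear surjection $q : H^1(A,\cO_A)^* \to U_0$; and this pushout extension must split (being an extension of $A$ by a vector group that is \emph{affine}—indeed its affinization kills nothing). But by the universal property of $E(A)$, any extension of $A$ by a vector group $U_0$ that splits corresponds to the zero classifying map $\Hom(H^1(A,\cO_A)^*, U_0) = H^1(A,\cO_A) \otimes U_0 \to (\text{splittings})$, forcing $q = 0$, a contradiction. Equivalently, and more cleanly: one checks that the classifying map for the \emph{pushout} of the universal extension along $q$ is exactly $q$ itself (this is the defining property of "universal"), and a pushout of the universal extension splits if and only if the pushout map is $0$.

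Part~(ii) follows the same mechanism made quantitative. Given $G$ as in~(\ref{eqn:cd}), it is the pushout of $E(A)$ along $\gamma$. Write $U = \gamma(H^1(A,\cO_A)^*) \oplus W$ for a complementary subspace $W$ (characteristic $0$, so vector groups are just vector spaces). Then $G$ splits off the trivial extension by $W$: concretely $G \simeq G' \times W$ where $G'$ is the pushout of $E(A)$ along the \emph{surjection} $\gamma' : H^1(A,\cO_A)^* \to \gamma(H^1(A,\cO_A)^*)$. Now apply Lemma~\ref{lem:ext}(i): $G$ is anti-affine iff both $G'$ and $W$ are, and $W$ is anti-affine iff $W = 0$. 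So it remains to show $G'$ is anti-affine whenever $\gamma'$ is surjective. For this, either (a) repeat the argument of~(i) with $E(A)$ replaced by $G'$—any nontrivial affine quotient of $G'$ would, as before, be a nonzero quotient vector group of $\gamma(H^1)$, realized as a pushout of $G'$, hence of $E(A)$, along a nonzero linear map that kills the image of $\gamma'$; surjectivity of $\gamma'$ makes this map zero, contradiction—or (b) invoke that $G'$ is a quotient of the anti-affine group $E(A)$ (the pushout map $E(A) \to G'$ is surjective with vector-group kernel), so $G'$ is anti-affine by Lemma~\ref{lem:ext}(ii). Option~(b) is cleanest: anti-affineness passes to quotients, and $G'$ is literally a quotient of $E(A)$, so $G$ anti-affine $\iff W=0 \iff \gamma$ surjective.

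Part~(iii) is then essentially bookkeeping in $\Hom$-groups. Vector extensions of $A$ by $U$ are classified by $\Ext^1(A,U) \cong H^1(A,\cO_A) \otimes_k U \cong \Hom_k(H^1(A,\cO_A)^*, U)$, and under this identification the extension~(\ref{eqn:cd}) corresponds exactly to the class $\gamma \in \Hom_k(H^1(A,\cO_A)^*, U)$. By~(ii), the anti-affine ones are those with $\gamma$ surjective. Two such, say $\gamma_1 : H^1(A,\cO_A)^* \to U_1$ and $\gamma_2 : H^1(A,\cO_A)^* \to U_2$, give isomorphic group schemes over $A$ iff there is a linear isomorphism $u : U_1 \xrightarrow{\sim} U_2$ with $u \circ \gamma_1 = \gamma_2$ (functoriality of pushout), which for surjections holds iff $\ker \gamma_1 = \ker \gamma_2$. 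Hence the isomorphism classes are in bijection with the subspaces $\ker \gamma \subseteq H^1(A,\cO_A)^*$, equivalently—by taking annihilators, or by dualizing once more—with the subspaces of $H^1(A,\cO_A)$. I expect the only mildly delicate point to be the assertion that the pushout of the \emph{universal} extension along a linear map $\gamma$ has classifying class precisely $\gamma$; this is really the content of universality, but one should cite~\cite{Ro58,MM74} or spell out that $\Ext^1(A,-)$ is corepresented by $H^1(A,\cO_A)^*$ with universal element the class of $E(A)$. Everything else is a formal consequence of parts~(i)–(ii) and Lemma~\ref{lem:ext}.
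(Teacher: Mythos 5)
Your overall structure is sound and your conclusions are correct, but the step that carries the real content of part~(i) --- that the pushout of the universal extension along $q$ splits --- is asserted with a justification that does not hold up: ``being an extension of $A$ by a vector group that is affine --- indeed its affinization kills nothing'' is not a reason (an extension of $A$ by a vector group is never affine, and such extensions certainly need not split; that is exactly why $H^1(A,\cO_A)\neq 0$ matters). Your ``cleaner'' reformulation still presupposes this splitting. The correct justification, which is what the paper supplies, is that $q$ is by construction the restriction to $H^1(A,\cO_A)^*$ of the affinization homomorphism $\varphi : E(A)\to U_0$, and pushing out an extension along a map that extends to a homomorphism on the whole middle term always yields a split extension: concretely, $-\varphi+\id : E(A)\times U_0\to U_0$ kills the antidiagonal copy of $H^1(A,\cO_A)^*$ and hence descends to a retraction of the pushout onto $U_0$. (Equivalently: since $E(A)_{\ant}\cap H^1(A,\cO_A)^*=\ker q$ and $E(A)_{\ant}$ surjects onto $A$, the image of $E(A)_{\ant}$ in $E(A)/\ker q$ is a section over $A$.) With that one line inserted, your~(i) coincides with the paper's proof.

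Part~(ii) is where you genuinely diverge, and your route (option~(b)) is correct and arguably cleaner: writing $U=\gamma\bigl(H^1(A,\cO_A)^*\bigr)\oplus W$, observing that $G\simeq G'\times W$ with $G'=E(A)/\ker\gamma$ a quotient of the anti-affine group $E(A)$, and invoking Lemma~\ref{lem:ext} reduces everything to~(i). The paper instead computes $\cO(G)$ directly as the algebra of invariants of $\cO(U)$ under $H^1(A,\cO_A)^*$ acting by translations via $\gamma$, which is $\cO\bigl(U/\gamma(H^1(A,\cO_A)^*)\bigr)$; the two arguments cost about the same, but yours makes the role of Lemma~\ref{lem:ext} explicit and avoids the invariant-theoretic computation. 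Part~(iii) matches the paper: it records the subspace as the image of the transpose $\gamma^t$, you record $\ker\gamma$ and dualize, which is the same bijection.
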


\begin{proof}
(i) The affinization epimorphism (\ref{eqn:aff})
$$
\varphi  = \varphi_{E(A)} : E(A) \to V
$$ 
induces epimorphisms 
$$
H^1(A,\cO_A)^* \to W, \quad 
A = E(A)/H^1(A,\cO_A)^* \to V/W
$$
where $W$ is a subspace of $V$. The latter epimorphism must be
trivial, and hence $\varphi$ restricts to an epimorphism
$$
\delta : H^1(A,\cO_A)^* \to V.
$$
Moreover, $V$ is a vector group, and $\delta$ is $k$-linear.
The extension given by the commutative diagram
$$
\CD
0 @>>> H^1(A,\cO_A)^* @>>> E(A) @>>>  A @>>> 0 \\
&  &   @V{\delta}VV        @VVV       @V{\id}VV \\
0 @>>> V              @>>> H    @>>>  A @>>> 0 \\
\endCD
$$
is split, as the map $- \varphi + \id : E(A) \times V \to V$ factors 
through a retraction of $H$ onto $V$. Since $E(A)$ is the universal 
extension, it follows that $\delta = 0$, i.e., $V = 0$.

(ii) The group $G$ is the quotient of $E(A) \times U$ by the diagonal 
image of $H^1(A,\cO_A)^*$. Since $\cO \big(E(A) \big) = k$, it follows 
that $\cO(G)$ is the algebra of invariants of 
$\cO(U)$ under $H^1(A,\cO_A)^*$ acting by translations
via $\gamma$. This implies the assertion.

(iii) follows from (ii) by assigning to $\gamma$ the image of the 
transpose map $\gamma^t : U^* \to H^1(A,\cO_A)$. 
\end{proof}

\begin{remark}
In the preceding statement, the assumption of characteristic $0$
cannot be omitted in view of Proposition \ref{prop:pos}. 
This may also be seen directly as follows.
If $k$ has characteristic $p > 0$, any vector extension
$0 \to U \to G \to A \to 0$ 
splits after pull-back under the multiplication map 
$p_A : A \to A$ (since $p_A$ is an isogeny, and $p_U = 0$).
This yields an isogeny $U \times A \to G$. Thus, $G$ cannot be
anti-affine in view of Lemma \ref{lem:iso}.
\end{remark}

\subsection{Classification of anti-affine groups}
\label{subsec:cla}

To complete this classification, we may assume that $k$ has 
characteristic $0$, in view of Proposition \ref{prop:pos}. 

Let $G$ be an anti-affine algebraic group. Then $G_{\aff}$ is a 
connected commutative algebraic group, and hence admits a unique 
decomposition
\begin{equation}\label{eqn:prod}
G_{\aff} = T \times U
\end{equation}
where $T$ is a torus, and $U$ is connected and unipotent; $U$ has
then a unique structure of $k$-vector space. Thus, $G/U$ is a 
semi-abelian variety (extension of $A$ by $T$) and $G/T$ is a 
vector extension of $A$ by $U$. Moreover, the quotient homomorphisms 
$p_U : G \to G/U$, $p_T : G \to G/T$ fit into a cartesian square
\begin{equation}\label{eqn:cart}
\CD
G  @>{p_U}>>  G/U            \\
@V{p_T}VV     @V{\alpha_{G/U}}VV    \\
G/T @>{\alpha_{G/T}}>> A       \\
\endCD
\end{equation}
where $\alpha_{G/U}$ (resp.~$\alpha_{G/T}$) is the quotient by $T$
(resp.~$U$). Moreover,
$\alpha = \alpha_{G/T} p_T = \alpha_{G/U} p_U$.
This yields a canonical isomorphism of algebraic groups
over $A$:
\begin{equation}\label{eqn:gtu}
\CD
G @>{\simeq}>> G/U \times_A G/T. \\
\endCD
\end{equation}

\begin{proposition}\label{prop:quot}
With the preceding notation, $G$ is anti-affine if and only 
if $G/U$ and $G/T$ are both anti-affine.
\end{proposition}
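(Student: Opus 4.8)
The plan is to use the canonical isomorphism (\ref{eqn:gtu}), which presents $G$ as a fibre product $G/U \times_A G/T$ over $A$, together with the stability properties of anti-affineness established in Lemmas \ref{lem:ext} and \ref{lem:equ}. One direction is immediate: if $G$ is anti-affine, then both $G/U$ and $G/T$ are anti-affine, since they are quotients of $G$ and anti-affineness passes to quotients by Lemma \ref{lem:ext}(ii) (the exact sequences $1 \to U \to G \to G/U \to 1$ and $1 \to T \to G \to G/T \to 1$).

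For the converse, suppose $G/U$ and $G/T$ are both anti-affine; I must show $G$ is. The idea is to realize $G$ as an extension whose sub and quotient are both known to be anti-affine, so that Lemma \ref{lem:ext}(ii) applies. First I would observe that the cartesian square (\ref{eqn:cart}) identifies the kernel of $p_T : G \to G/T$ with $U$, and that $p_T$ carries $G_{\aff} = T \times U$ onto the affine subgroup of $G/T$; more precisely, restricting $p_T$ to $G/U$'s preimage structure, one sees that $p_U$ maps $U \subset G$ isomorphically onto a subgroup of $G/U$ which is a vector group inside $(G/U)_{\aff} = T$ — but $T$ is a torus, so this forces the image of $U$ under $p_U$ to be trivial, i.e. $U \subset \ker p_U$; since $\ker p_U = T$ as well, we would get $U \cap T \ne 0$ unless handled carefully. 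So instead the cleaner route is: inside $G$, form the subgroup $S := p_T^{-1}\big( \text{(something anti-affine in } G/T) \big)$. Actually the most economical argument is as follows. Consider the quotient map $q : G \to G/U$; its kernel is $U$. Now $U$ sits inside $G_{\aff} = T\times U$, and the composite $G_{\aff} \to G \to G/U$ has image the affine part of $G/U$, namely $T$; its kernel is exactly $U$ (the $U$-factor), so $G_{\aff}/U \cong T$ inside $G/U$. Hence we have an exact sequence $1 \to U \to G \to G/U \to 1$ with $G/U$ anti-affine by hypothesis, and I need $U$ itself to be ``invisible'' to $\cO(G)$. This is where I would invoke the anti-affineness of $G/T$: the vector extension $G/T$ being anti-affine means, by Proposition \ref{prop:vec}(ii), that the classifying map $\gamma : H^1(A,\cO_A)^* \to U$ is surjective, which constrains how $U$ is glued into $G$. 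Pulling this back through the fibre-product description (\ref{eqn:gtu}), the action of $U$ on $\cO(G)$ by translation has no nonconstant invariants beyond those already killed.

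The cleanest formulation I would actually write: by (\ref{eqn:gtu}), $\cO(G) = \cO(G/U \times_A G/T)$, and since the projections to $G/U$ and $G/T$ realize $G$ as a fibre product over $A$, one has $G = (G/U \times_A E(A)) / H^1(A,\cO_A)^*$-type presentations; combining the descriptions of $\cO(G/U)$ from Proposition \ref{prop:sab} and of $\cO(G/T)$ from Proposition \ref{prop:vec} as sheaf cohomology on $A$, and using that $\alpha_*(\cO_{G_{k_s}})$ decomposes compatibly under the torus and vector actions, reduces the vanishing $H^0(A_{k_s}, (\alpha_G)_* \cO_{G_{k_s}}) = k_s$ to the conjunction of the two vanishings already proved. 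Concretely: extend scalars to $k_s$, decompose $(\alpha_{G/U})_*\cO$ into the line bundles $\cL_\lambda$ and note the unipotent part contributes a compatible filtration whose graded pieces are among these same $\cL_\lambda$ tensored with symmetric powers of a trivial bundle; the hypotheses that $G/U$ and $G/T$ are anti-affine say respectively that $\cL_\lambda$ is nontrivial for $\lambda \ne 0$ and that the vector-extension data is ``as non-split as possible'', and together these force all higher global sections to vanish.

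The main obstacle, I expect, is the bookkeeping in the last step: writing down precisely how $\alpha_*(\cO_{G_{k_s}})$ decomposes as a sheaf on $A_{k_s}$ under the combined action of the torus $T$ and the vector group $U$, and checking that the filtration pieces are controlled by the two separate anti-affineness hypotheses with no cross terms producing spurious global sections. One must be careful that the fibre product over $A$ does not introduce sections that neither factor separately sees — this is exactly the content of the cartesian square (\ref{eqn:cart}) guaranteeing $\cO(G)$ is computed from the two factors without a correction term, which should follow from flatness of $\alpha_{G/U}$ and $\alpha_{G/T}$ and the projection/Künneth-type formula over $A$. Once that compatibility is in hand, the vanishing of global sections is a formal consequence of Propositions \ref{prop:sab}(i) and \ref{prop:vec}(ii).
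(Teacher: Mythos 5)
Your final ``cleanest formulation'' is indeed the paper's argument: after reducing to $k$ algebraically closed, the cartesian square (\ref{eqn:cart}) gives $\alpha_*(\cO_G) \simeq \alpha_{G/U,*}(\cO_{G/U}) \otimes_{\cO_A} \alpha_{G/T,*}(\cO_{G/T})$ (this needs no flatness or K\"unneth formula --- both morphisms are affine, so it is the local identity $\cO_{X \times_Y Z} = \cO_X \otimes_{\cO_Y} \cO_Z$), one decomposes the first factor as $\bigoplus_\lambda \cL_\lambda$ with $H^0(A,\cL_\lambda)=0$ for $\lambda \neq 0$ by anti-affineness of $G/U$, and one concludes. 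But the decisive step, which you defer as ``bookkeeping,'' is a genuine lemma that you never prove: the pushforward $\alpha_{G/T,*}(\cO_{G/T})$ of the structure sheaf of the $U$-torsor $G/T \to A$ admits an increasing filtration with \emph{all} subquotients isomorphic to $\cO_A$ (the paper's Lemma \ref{lem:fib}, proved via the isomorphism $\pi_*(\cO_X) \simeq \pi_*(\cO_X \otimes_k \cO(U))^U$). Granting that, the $\lambda \neq 0$ summands vanish for purely formal reasons: $\cL_\lambda \otimes_{\cO_A} \alpha_{G/T,*}(\cO_{G/T})$ is filtered with subquotients $\cL_\lambda$, each without sections, and $H^0$ is left exact. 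In particular the anti-affineness of $G/T$ is \emph{not} used there at all --- no appeal to the surjectivity of $\gamma$ from Proposition \ref{prop:vec}(ii) is needed; it enters only in the $\lambda = 0$ term, to give $H^0(A,\alpha_{G/T,*}(\cO_{G/T})) = \cO(G/T) = k$. Your phrasing that the two hypotheses ``together force'' the vanishing, and your worry about ``cross terms,'' show that this division of labour --- which is the whole point of the proof --- is not yet in place.

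Two further remarks. The route you first try, applying Lemma \ref{lem:ext}(ii) to $1 \to U \to G \to G/U \to 1$, cannot work: that lemma requires the kernel to be anti-affine, and $U$ is affine; there is no way to make $U$ ``invisible'' to $\cO(G)$ by quoting Proposition \ref{prop:vec}(ii) without redoing the sheaf computation over $A$. And the intermediate paragraph about $p_U$ carrying $U$ into $T$ is simply incorrect ($\ker p_U = U$ and $\ker p_T = T$, so $p_U$ kills $U$ rather than embedding it); you rightly abandon it, and it should be deleted.
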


\begin{proof}
If $G$ is anti-affine, then so are its quotient groups $G/U$ and $G/T$.

For the converse, we may assume that $k$ is algebraically closed in
view of Lemma \ref{lem:equ}. Note that the diagram (\ref{eqn:cart})
yields an isomorphism of quasi-coherent sheaves on $A$:
\begin{equation}\label{eqn:tens}
\alpha_* (\cO_G) \simeq
\alpha_{G/U,*} (\cO_{G/U}) \otimes_{\cO_A} \alpha_{G/T,*} (\cO_{G/T}).
\end{equation}
Moreover, we have a decomposition
$$
\alpha_{G/U,*} (\cO_{G/U}) = 
\bigoplus_{\lambda \in \Lambda} \cL_{\lambda}
$$
as in (\ref{eqn:dec}), where $\cL_0 = \cO_A$ while
$H^0(A,\cL_{\lambda}) = 0$ for any $\lambda \neq 0$.
On the other hand, the quasi-coherent sheaf $\alpha_{G/T,*} (\cO_{G/T})$ 
admits an increasing filtration with subquotients isomorphic 
to $\cO_A$, by the next lemma applied to the $U$-torsor
$\alpha_{G/T} : G/T \to A$. It follows that
$$
H^0 \big( A, 
\cL_{\lambda} \otimes_{\cO_A} \alpha_{G/T,*} (\cO_{G/T}) \big) = 0
$$
for any $\lambda \neq 0$. Thus,
$$\displaylines{
\cO(G) = H^0 \big( A,\alpha_*(\cO_G) \big) \simeq
\bigoplus_{\lambda \in \Lambda} H^0 \big( A, 
\cL_{\lambda} \otimes_{\cO_A} \alpha_{G/T,*} (\cO_{G/T}) \big)
\hfill \cr \hfill 
= H^0 \big( A,\alpha_{G/T,*} (\cO_{G/T}) \big) = \cO(G/T) = k.
\cr}
$$
\end{proof}

\begin{lemma}\label{lem:fib}
Let $\pi : X \to Y$ be a torsor under a non-trivial connected 
unipotent algebraic group $U$. Then the quasi-coherent sheaf 
$\pi_* (\cO_X)$ admits an infinite increasing filtration with
subquotients isomorphic to~$\cO_Y$.
\end{lemma}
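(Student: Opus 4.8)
The plan is to reduce to the case of a one-dimensional unipotent group by dévissage, and then exploit the fact that $\mathbb{G}_a$-torsors are affine-locally split with affine transition functions. First I would recall that every connected unipotent algebraic group $U$ in characteristic $0$ has a composition series $U = U_0 \supset U_1 \supset \cdots \supset U_n = \{0\}$ with each $U_i$ normal in $U$ and each successive quotient $U_{i-1}/U_i \cong \mathbb{G}_a$; this is the standard structure theory of unipotent groups over a field of characteristic $0$ (and here $U$ is moreover commutative, being a vector group). Quotienting the torsor $\pi : X \to Y$ by the subgroup $U_1 \subset U$ yields a factorization $X \to X/U_1 \to Y$ in which $X \to X/U_1$ is a torsor under $\mathbb{G}_a$ and $X/U_1 \to Y$ is a torsor under $U/U_1$; by induction on $\dim U$ it suffices to treat the case $U = \mathbb{G}_a$, and then compose the filtrations (noting that an extension of a filtration with subquotients $\cO_Y$ by another such is again such, and that pushing forward along an affine morphism is exact so the filtration on $(X/U_1 \to Y)_*\cO$ pulls up correctly).

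So assume $U = \mathbb{G}_a$ and $\pi : X \to Y$ is a $\mathbb{G}_a$-torsor. Such a torsor is Zariski-locally trivial — indeed it is classified by $H^1_{\mathrm{Zar}}(Y, \cO_Y) = H^1(Y, \cO_Y)$ since $\mathbb{G}_a$ is a quasi-coherent sheaf in the Zariski topology — so $\pi$ is an affine morphism and $X$ is covered by opens of the form $\pi^{-1}(V) \cong V \times \mathbb{A}^1$ over an affine open cover $\{V\}$ of $Y$. On overlaps the transition functions act by $t \mapsto t + g_{VV'}$ with $g_{VV'} \in \cO(V \cap V')$, so the sheaf $\pi_*\cO_X$ is the quasi-coherent sheaf on $Y$ obtained by gluing $\cO_V[t]$ along these affine translations. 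The key point is that the subsheaf $F_m \subset \pi_*\cO_X$ of "functions of degree $\le m$ in the fibre coordinate" is well-defined globally: a translation $t \mapsto t + g$ preserves the degree filtration on the polynomial ring $\cO_V[t]$, since $(t+g)^j = t^j + (\text{lower order})$. This gives an exhaustive increasing filtration $\cO_Y = F_0 \subset F_1 \subset F_2 \subset \cdots$ of $\pi_*\cO_X$ by quasi-coherent subsheaves.

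It remains to identify the subquotients. The graded piece $F_m/F_{m-1}$ is locally free of rank one on $Y$: on each $V$ it is spanned by the class of $t^m$. To see that it is isomorphic to $\cO_Y$ globally — not merely a line bundle — I would compute the transition function for $F_m/F_{m-1}$ explicitly: the class of $t^m$ on $V$ and the class of $(t')^m = (t - g_{V'V})^m \equiv t^m$ modulo $F_{m-1}$ on $V'$ agree on the overlap, so the gluing datum on the associated graded is trivial and $F_m/F_{m-1} \cong \cO_Y$. The main obstacle is this last verification — one must be careful that the filtration is genuinely intrinsic (independent of the local trivializations) and that the identifications of successive quotients with $\cO_Y$ are compatible on triple overlaps — but this reduces to the elementary observation that the upper-triangular-with-$1$'s action of translation on a polynomial ring is canonical at the level of the associated graded. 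Since $U = \mathbb{G}_a$ is non-trivial the coordinate $t$ genuinely exists and the filtration is infinite, as asserted.
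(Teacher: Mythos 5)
Your argument is genuinely different from the paper's. The paper never trivializes the torsor: it establishes a canonical isomorphism $\pi_*(\cO_X) \simeq \pi_*\bigl(\cO_X \otimes_k \cO(U)\bigr)^U$ (a sheafified form of the tautological isomorphism $M \simeq (M \otimes_k \cO(U))^U$ valid for any $U$-module $M$), and then imports the filtration of the $U$-module $\cO(U)$ by submodules with trivial subquotients --- a fact that holds for any unipotent group over any field. Your route is instead to split off copies of $\bG_a$, use that $\bG_a$-torsors are Zariski-locally trivial (via $H^1_{\mathrm{fppf}}(Y,\bG_a)=H^1(Y,\cO_Y)$), and observe that translation preserves the degree filtration of $\cO_V[t]$ and acts as the identity on its associated graded. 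In characteristic $0$ this is correct and more concrete. Two small points: the roles of $X \to X/U_1$ and $X/U_1 \to Y$ are interchanged in your d\'evissage (the first is a $U_1$-torsor, the second a $U/U_1\simeq\bG_a$-torsor), and the refined filtration you obtain is indexed by an ordinal of type $\omega^{\dim U}$ rather than by $\mathbb{N}$ --- harmless for the two places the lemma is used, and avoidable by trivializing the whole vector group $U\simeq\bG_a^{\,d}$ at once and filtering $\cO_V[t_1,\dots,t_d]$ by total degree.

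The real gap is one of generality: the lemma is stated for an arbitrary connected unipotent algebraic group over an arbitrary field, whereas your proof covers characteristic $0$ (and, with the same words, any perfect field, where $U$ is still split unipotent). Over an imperfect field of characteristic $p$ the composition series of $U$ has subquotients that are in general only $k$-forms of $\bG_a$, and torsors under such forms need not be Zariski-locally trivial --- they can already be non-trivial over the base field itself --- so the local-coordinate computation is unavailable. The paper's coinduction argument sidesteps this entirely, which is why it works in the stated generality. Since the lemma is only invoked in the paper in characteristic-$0$ situations, your proof suffices for the applications, but it does not prove the statement as written.
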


\begin{proof}
We claim that there is an isomorphism of quasi-coherent sheaves 
over $Y$:
$$
\CD
u : \pi_*(\cO_X) @>{\simeq}>> 
\pi_* \big( \cO_X \otimes_k \cO(U) \big)^U. \\
\endCD
$$
Here the right-hand side denotes the subsheaf of $U$-invariants
in the quasi-coherent sheaf $\pi_* \big( \cO_X \otimes_k \cO(U) \big)$, 
where $U$ acts via its natural action on $\cO_X$ and its action 
on $\cO(U)$ by left multiplication.

The assertion of the lemma follows from that claim, as the 
$U$-module $\cO(U)$ admits an infinite increasing filtration 
with trivial subquotients.

To prove the claim, we first construct a natural isomorphism
$$
\CD
u_M : M @>{\simeq}>> \big( M \otimes_k \cO(U) \big)^U \\
\endCD
$$
for any $U$-module $M$. Indeed, the right-hand side may be 
regarded as the space of $U$-equivariant morphisms $f : U \to M$. 
Any such morphism is of the form $f_m : u \to u \cdot m$ for a 
unique $m \in M$, namely, $m = f(e_U)$. We then set 
$u_M(m) := f_m$.

Next, if the $U$-module $M$ is also a $k$-algebra where $U$ acts 
by algebra automorphisms, then $u_M$ is an isomorphism of 
$M^U$-algebras, where the algebra of invariants $M^U$ acts on 
$\big( M \otimes_k \cO(U) \big)^U$ via multiplication on $M$.
Moreover, $u_M$ commutes with localization by elements of $M^U$.
Thus, the isomorphisms $u_{\cO(\pi^{-1}(Y_i))}$, where 
$(Y_i)_{i \in I}$ is an affine open covering of $Y$, may be glued 
to yield the desired isomorphism.
\end{proof}

Combining the results of Propositions \ref{prop:pos},
\ref{prop:vec} and \ref{prop:quot}, we obtain the following 
classification:

\begin{theorem}\label{thm:cla}
{\rm (i)} In positive characteristics, the isomorphism classes 
of anti-affine groups over an abelian variety $A$ correspond 
bijectively to the sub-$\Gamma_k$-lattices 
$\Lambda \subset A^{\vee}(k_s)$.

\smallskip

\noindent
{\rm (ii)} In characteristic $0$, the isomorphism classes of 
anti-affine groups over $A$ correspond bijectively to the pairs 
$(\Lambda,V)$, where $\Lambda$ is as in {\rm (i)}, and $V$ is a 
subspace of $H^1(A,\cO_A)$.
\end{theorem}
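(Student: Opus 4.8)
The plan is to assemble Theorem \ref{thm:cla} directly from the structural results already established, treating the positive-characteristic and characteristic-zero cases separately.

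For part (i), Proposition \ref{prop:pos} reduces us immediately to semi-abelian varieties, and then Proposition \ref{prop:sab}(ii) gives the asserted bijection with sub-$\Gamma_k$-lattices $\Lambda \subset A^\vee(k_s)$. So part (i) is essentially a restatement and requires no new argument.

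For part (ii), assume $\operatorname{char} k = 0$. Given an anti-affine group $G$ over $A$, use the decomposition $G_{\aff} = T \times U$ of (\ref{eqn:prod}) and the cartesian square (\ref{eqn:cart}), which yields the canonical isomorphism $G \simeq G/U \times_A G/T$ of (\ref{eqn:gtu}). By Proposition \ref{prop:quot}, $G$ is anti-affine if and only if both $G/U$ (a semi-abelian variety over $A$) and $G/T$ (a vector extension of $A$ by $U$) are anti-affine. By Proposition \ref{prop:sab}(ii), the isomorphism classes (over $A$) of the first factor correspond to sub-$\Gamma_k$-lattices $\Lambda \subset A^\vee(k_s)$, and by Proposition \ref{prop:vec}(iii), the isomorphism classes (over $A$) of the second factor correspond to subspaces $V \subset H^1(A,\cO_A)$. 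Thus the pair $(\Lambda, V)$ is a complete invariant: the map $G \mapsto (\Lambda, V)$ is well defined and injective, since $G$ is recovered (up to isomorphism over $A$) as the fibre product $G/U \times_A G/T$ of the two factors determined by $\Lambda$ and $V$; and it is surjective, since for any pair $(\Lambda, V)$ one forms the corresponding semi-abelian variety $G_1$ and vector extension $G_2$ over $A$, sets $G := G_1 \times_A G_2$, and notes that $G$ is an extension of $A$ by $T \times U$ with $G/U \simeq G_1$ and $G/T \simeq G_2$ both anti-affine, so $G$ is anti-affine by Proposition \ref{prop:quot}.

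The only point demanding a little care — and the step I would write out most carefully — is checking that the assignment is genuinely a bijection on isomorphism classes over $A$, i.e. that two anti-affine groups $G, G'$ over $A$ are isomorphic over $A$ if and only if their pairs $(\Lambda, V)$ and $(\Lambda', V')$ coincide. One direction is clear since the invariants $T$, $U$, $\Lambda$, $V$ are built canonically from $G$. For the converse, an isomorphism $G \xrightarrow{\sim} G'$ over $A$ must carry $G_{\aff}$ to $G'_{\aff}$, hence (by uniqueness of the decomposition (\ref{eqn:prod})) $T$ to $T'$ and $U$ to $U'$, inducing isomorphisms $G/U \xrightarrow{\sim} G'/U'$ and $G/T \xrightarrow{\sim} G'/T'$ over $A$; Propositions \ref{prop:sab}(ii) and \ref{prop:vec}(iii) then force $\Lambda = \Lambda'$ and $V = V'$. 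Conversely, $(\Lambda,V) = (\Lambda',V')$ gives isomorphisms over $A$ of the two factors, which combine via (\ref{eqn:gtu}) into an isomorphism $G \xrightarrow{\sim} G'$ over $A$. With this bookkeeping done, the theorem follows.
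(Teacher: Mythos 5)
Your proposal is correct and follows the same route as the paper, which derives the theorem in one line by combining Propositions \ref{prop:pos}, \ref{prop:sab}, \ref{prop:vec} and \ref{prop:quot} via the fibre-product decomposition (\ref{eqn:gtu}). The extra bookkeeping you supply on well-definedness and bijectivity of the assignment $G \mapsto (\Lambda,V)$ is exactly what the paper leaves implicit, and it is carried out correctly.
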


\begin{remark}
(i) The preceding classification may be formulated in terms of 
the dual variety $A^{\vee}$ only, as $H^1(A,\cO_A)$ is naturally
isomorphic to the tangent space $T_0(A^{\vee})$ (the Lie algebra
of $A^{\vee}$); see e.g. \cite[p.~130]{Mu70}. 

\smallskip

\noindent
(ii) To classify the anti-affine groups $G$ without prescribing 
an isomorphism $G/G_{\aff} \simeq A$, it suffices to replace 
the sublattices $\Lambda$ (resp.~the pairs $(\Lambda,V)$) with 
their isomorphism classes under the natural action 
of the automorphism group $\Aut(A)$ of the abelian variety $A$
(resp.~of the natural action of $\Aut(A) \times \Aut(A)$ on pairs). 
\end{remark}

\subsection{Universal morphisms}
\label{subsec:uni}

Throughout this subsection, we assume that the ground field $k$ is
perfect. We investigate morphisms from a prescribed variety
to anti-affine algebraic groups, by adapting results and
methods of Serre (see \cite{Se58a,Se58b}).

Consider a variety $X$ equipped with a $k$-rational point $x$. Then 
there exists a universal morphism to a semi-abelian variety
$$
\sigma_{X,x} : X \longrightarrow S, \quad x \longmapsto e_S.
$$
Indeed, this is a special case of \cite[Thm.~7]{Se58a} when $k$ is
algebraically closed, and the case of a perfect field follows
by Galois descent as in \cite[Sec.~V.4]{Se59} (see 
\cite[Thm.~A.1]{Wit06} for a generalization to an arbitrary field). 

We say that $\sigma_{X,x}$ is the \emph{generalized Albanese morphism}
of the pointed variety $(X,x)$, and $S = S_X$ is the 
\emph{generalized Albanese variety}, which indeed depends only on
$X$. The formation of $\sigma_{X,x}$ commutes with base change to
perfect field extensions.

Recalling the extension of algebraic groups (\ref{eqn:sab})
$$
\CD 
1 @>>> T @>>> S @>{\alpha_S}>> A @>>> 0, 
\endCD
$$ 
the composite morphism
$$
\alpha_{X,x} := \alpha_S \sigma_{X,x} : X \longrightarrow A = A_X
$$
is the \emph{Albanese morphism} of $X$, i.e., the universal morphism 
to an abelian variety that maps $x$ to the origin.

We note that the pull-back map
$$
\alpha_{X,x}^* : A^{\vee}(k) \subset \Pic(A) \longrightarrow \Pic(X)
$$
is independent of the choice of $x \in X(k)$ (indeed, any two Albanese 
morphisms differ by a translation by a $k$-rational point of $A$, 
and the translation action of $A$ on $A^{\vee}$ is trivial); we denote
that map by $\alpha_X^*$. Likewise, the analogous map 
$H^1(A,\cO_A) \to H^1(X,\cO_X)$ is independent of $x$.

We also record the following observation:

\begin{lemma}\label{lem:com}
Let $X$ be a complete variety equipped with a $k$-rational point. Then
the pull-back maps $A^{\vee}(k) \to \Pic(X)$ and  
$H^1(A,\cO_A) \to H^1(X,\cO_X)$ are both injective.
\end{lemma}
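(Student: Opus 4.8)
The plan is to reduce everything to the classical fact that the Albanese morphism of a complete variety is, in a suitable sense, generically surjective onto its image, which spans the abelian variety $A = A_X$. Concretely, write $\alpha = \alpha_{X,x} : X \to A$ for the Albanese morphism and recall that its image generates $A$ as an algebraic group; since $X$ is complete, $\alpha(X)$ is a closed subvariety of $A$ containing $0$, and the subgroup it generates is all of $A$. First I would treat the map $A^{\vee}(k) \to \Pic(X)$. Suppose $\cL \in A^{\vee}(k) \subset \Pic(A)$ pulls back to the trivial bundle on $X$; I want to conclude $\cL = 0$. After base change to $\kb$ (which is harmless, as $k$ is perfect and both $\Pic$ and $H^1(\cO)$ behave well, and triviality can be checked there), $\cL$ is an algebraically trivial line bundle, so it corresponds to a point $c \in A^{\vee}(\kb)$, and pulling back along translations $t_a$ of $A$ does not change $\cL$ up to isomorphism. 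The key point: $\alpha^* \cL \cong \cO_X$ forces $(m \circ (\alpha \times \alpha))^* \cL \cong \cO_{X \times X}$ where $m : A \times A \to A$ is addition, because $\cL \in A^{\vee}$ satisfies the theorem-of-the-square identity $m^*\cL \cong p_1^*\cL \otimes p_2^*\cL$; iterating, $\cL$ pulls back trivially along the map $X^n \to A$ sending $(x_1,\dots,x_n) \mapsto \alpha(x_1) + \cdots + \alpha(x_n)$ (and along the analogous maps with signs). Since $\alpha(X)$ generates $A$, these images exhaust $A$, so $\cL$ restricted to every point of $A$ — more precisely, along a surjective map onto $A$ — is trivial, hence $\cL = 0$.

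The cleanest way to run the last step is via the seesaw principle: having arranged a variety $Z$ (a suitable power of $X$) with a surjective morphism $f : Z \to A$ such that $f^*\cL \cong \cO_Z$, one concludes $\cL \cong \cO_A$ directly, because a line bundle pulled back trivially along a surjective proper morphism with geometrically connected fibers (which one arranges, or circumvents by passing to the Stein factorization and using that $\cL$ is algebraically trivial) must be trivial. Alternatively, and more in the spirit of the paper: the Albanese property says that any morphism from $X$ to an abelian variety factors through $\alpha$; apply this to the morphism $X \to A^{\vee\vee} = A$ — no, better: a nontrivial $\cL \in A^\vee(k)$ gives a nontrivial point of $A^\vee$, and $\alpha^*\cL = \cO_X$ would say the composite $X \xrightarrow{\alpha} A \xrightarrow{c} \Pic$ lands in the identity, i.e. $\alpha(X) \subseteq \ker(\text{translation-invariant section detecting }c)$, but this kernel is a proper closed subgroup of $A$, contradicting that $\alpha(X)$ generates $A$.

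For the second map, $H^1(A,\cO_A) \to H^1(X,\cO_X)$, I would argue in exactly parallel fashion, now linearized. One identifies $H^1(A,\cO_A)$ with the tangent space $T_0(A^\vee)$ (as noted in the Remark after Theorem \ref{thm:cla}), or more directly uses that $\alpha^* : H^1(A,\cO_A) \to H^1(X,\cO_X)$ is the differential at $0$ of the pull-back map on $\Pic^0$; since the latter is injective by the first part and is a homomorphism of algebraic groups ($A^\vee \to \Pic^0_{X/k}$), and $A^\vee$ is smooth, injectivity of the homomorphism of algebraic groups forces injectivity on tangent spaces at $0$ — this is where one uses characteristic $0$ implicitly if needed, but in fact injectivity of a homomorphism of abelian varieties (or of $A^\vee$ into a group scheme) on $\kb$-points already gives that it is a closed immersion on $A^\vee$ when the source is an abelian variety, hence injective on Lie algebras. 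Alternatively one repeats the seesaw/generation argument at the level of Čech or derived-functor $H^1$, using the addition formula $m^* = p_1^* + p_2^*$ on $H^1(\cO)$ of an abelian variety.

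The main obstacle I anticipate is the very last implication — deducing triviality of $\cL$ on $A$ itself from triviality of its pull-back to the (singular, possibly reducible in higher powers) variety $Z$ mapping onto $A$. The honest way around it is to avoid needing $Z$ smooth or connected: use that $\cL$ lies in $\Pic^0(A)$, pick a point $z \in Z$ with $f(z) = 0$, and invoke that an element of $\Pic^0(A)$ whose pull-back along a surjection $f : Z \to A$ (with $Z$ proper, $f(z)=0$) is trivial must itself be trivial, which follows from the rigidity/seesaw package for line bundles in $\Pic^0$ together with properness of $Z$ — or, most efficiently, from the universal property of the Albanese morphism applied to $Z$, which forces the classifying map $Z \to A^\vee$ (given by the family $z' \mapsto \cL|_{\{f(z')\}}$, constantly $0$) to be the one induced from $A$, i.e. $c$ itself, so $c = 0$.
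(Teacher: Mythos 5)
Your overall strategy (theorem of the square, generation of $A$ by $\alpha(X)$, then descent of triviality along a surjection $Z = X^{2n} \to A$) does not close, and the gap is exactly at the step you flag as "the main obstacle." Triviality of $f^*\cL$ for a proper surjection $f : Z \to A$ does \emph{not} imply triviality of $\cL$, even for $\cL \in \Pic^0(A)$: a torsion line bundle of exact order $n$ is algebraically trivial and is trivialized by the associated degree-$n$ \'etale cover $A' \to A$, and the Stein factorization of $X^{2n} \to A$ is in general precisely such a finite cover (its degree is the number of connected components of the generic fibre, which you have no control over). So "using that $\cL$ is algebraically trivial" does not rescue the Stein-factorization route. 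Your two alternatives do not work either: the "kernel of the translation-invariant section detecting $c$" is not a well-defined object (for $\cL \in \Pic^0$ the map $a \mapsto t_a^*\cL \otimes \cL^{-1}$ is identically zero, so there is no proper closed subgroup to contradict), and the "classifying map $z' \mapsto \cL|_{\{f(z')\}}$" is vacuous, since a line bundle restricted to a closed point is always trivial. The paper closes exactly this gap by a different mechanism: it realizes $\cL$ as the class of the $\bG_m$-extension $1 \to \bG_m \to G \to A \to 0$, so that $\alpha^*\cL = 0$ means $\alpha$ lifts to $\gamma : X \to G$; the subgroup $H \subset G$ generated by $\gamma(X)$ is complete (hence an abelian variety) because $X$ is, $H \to A$ is an isogeny, and the universal property of the Albanese morphism forces this isogeny to split, i.e.\ to be an isomorphism, so the extension is trivial. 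Note that this kills torsion classes too, which is the whole point.

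The second assertion has an independent problem: it is not a formal consequence of the first. Injectivity of $A^{\vee}(\kb) \to \Pic(X_{\kb})$ does not give injectivity on tangent spaces in positive characteristic --- the kernel of $A^{\vee} \to \Pic_{X/k}$ could a priori be infinitesimal (invisible on $\kb$-points but with nonzero Lie algebra), and your claim that a homomorphism from an abelian variety that is injective on $\kb$-points is a closed immersion is false in characteristic $p$ (Frobenius). The paper therefore proves the two statements by parallel but separate arguments, replacing the $\bG_m$-extension by the $\bG_a$-extension classified by $u \in H^1(A,\cO_A) = \operatorname{Ext}^1(A,\bG_a)$ and running the same lifting/completeness/Albanese argument. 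If you restrict to characteristic $0$ your reduction of the second part to the first is salvageable (trivial kernel on $\kb$-points does imply smooth trivial kernel there), but the lemma is stated and used over arbitrary perfect fields, and in any case it rests on the first part, which is where the real gap lies.
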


\begin{proof}
This follows from general results on the Picard functor
(see \cite[Chap.~8]{BLR90}); we provide a direct argument. 
We may assume that $k$ is algebraically closed. Let 
$L \in A^{\vee}(k)$ such that $\alpha_X^*(L) = 0$. 
Consider the corresponding extension
\begin{equation}\label{eqn:tors}
\CD
1 @>>> \bG_m @>>> G @>{\alpha}>> A  @>>>0
\endCD
\end{equation}
as a $\bG_m$-torsor over $A$. Then the pull-back of this
torsor under $\alpha_{X,x}$ is trivial, that is, the projection
$X \times_A G \to X$ has a section. Thus, $\alpha_{X,x}$ lifts to
a morphism $\gamma : X \to G$ and hence to a morphism 
$X \to H$ where $H \subset G$ denotes the algebraic subgroup 
generated by the image of $\gamma$. Since $X$ is complete, $H$ is an
abelian variety (as follows e.g. from \cite[Exp.~VIB, Prop.~7.1]{SGA3}). 
Thus, $\alpha$ restricts to an isogeny $\beta : H \to A$. 
By the universal property of the Albanese morphism, it follows that 
$\beta$ is an isomorphism. Thus, the extension (\ref{eqn:tors}) is
split; in 
other words, $L$ is trivial. 

Likewise, given $u \in H^1(A,\cO_A)$ such that $\alpha_X^*(u) = 0$, 
one checks that $u = 0$ by considering the associated extension
$$
0 \longrightarrow \bG_a \longrightarrow G \longrightarrow A
\longrightarrow 0.
$$
\end{proof}

We now obtain a criterion for anti-affineness of the generalized
Albanese variety:

\begin{proposition}\label{prop:uni}
Given a pointed variety $(X,x)$, the associated semi-abelian variety 
$S = S_X$ is anti-affine if and only if $\cO(X_{\kb})^* = \kb^*$. 

Under that assumption, $S$ is classified by the pair $(A,\Lambda)$,
where $A$ is the Albanese variety of $X$, and $\Lambda$ is the kernel
of the pull-back map
$$
\alpha_X^* : A^{\vee}(\kb) \longrightarrow \Pic(X_{\kb}).
$$
In particular, this kernel is a $\Gamma_k$-lattice.
\end{proposition}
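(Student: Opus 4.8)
The plan is to reduce to the case where $k$ is algebraically closed (which is permitted since the formation of $\sigma_{X,x}$ commutes with base change to perfect extensions, and anti-affineness descends by Lemma \ref{lem:equ}), and then exploit the structure of $S$ as a semi-abelian variety together with the universal property of $\sigma_{X,x}$. Write the extension (\ref{eqn:sab}) for $S$ as $1 \to T \to S \xrightarrow{\alpha_S} A \to 0$, with $\Lambda$ the character group of $T$. By Proposition \ref{prop:sab}, $S$ is anti-affine if and only if the classifying homomorphism $c : \Lambda \to A^{\vee}(\kb)$, $\lambda \mapsto c(\lambda) = [\cL_\lambda]$, is injective.

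The first main step is to describe $c$ concretely via pull-back. A character $\lambda \in \Lambda$ gives a pushout extension $1 \to \bG_m \to S_\lambda \to A \to 0$, i.e.\ the $\bG_m$-torsor over $A$ attached to the line bundle $c(\lambda)$; composing $\sigma_{X,x}$ with $S \to S_\lambda$ and then with the coordinate on $\bG_m$ exhibits $\alpha_{X,x}^*(c(\lambda))$ as (the class of) a $\bG_m$-torsor on $X$ that is visibly trivializable through $S_\lambda$. Conversely, and this is the crux: if $L = c(\lambda) \in A^\vee(\kb)$ satisfies $\alpha_X^*(L) = 0$ in $\Pic(X)$, then as in the proof of Lemma \ref{lem:com} the morphism $\sigma_{X,x}$ lifts along the associated $\bG_m$-torsor $S_\lambda \to A$ to a morphism $X \to S_\lambda$ sending $x$ to $e$; pulling back further along $S \to S_\lambda$ and using the universal property of $\sigma_{X,x}$ forces this lift to factor through $S$, which means $\lambda$ lies in the kernel of $c$, hence $\lambda = 0$ once $c$ is injective. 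Running the argument in the other direction identifies $\ker(\alpha_X^* : A^\vee(\kb) \to \Pic(X_{\kb}))$ with $c(\Lambda)$, so that $c$ is injective precisely when $\Lambda \cong c(\Lambda) = \ker \alpha_X^*$, and by Proposition \ref{prop:sab}(ii) the latter $\Gamma_k$-lattice classifies $S$.

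It remains to match the analytic criterion $\cO(X_{\kb})^* = \kb^*$ with injectivity of $c$. One direction: if $f \in \cO(X_{\kb})^* \setminus \kb^*$, then $f$ defines a morphism $X_{\kb} \to \bG_m$; translating so that $x \mapsto 1$ and using the universal property gives a nonconstant homomorphism $S_{\kb} \to \bG_m$, i.e.\ a nonzero character of $T_{\kb}$ in the kernel of $c$ (because $\bG_m$ is affine, so the extension of $A$ it would have to witness is split), contradicting anti-affineness. For the converse one shows that if $S$ is anti-affine then every invertible regular function on $X_{\kb}$ is constant: the exact sequence $1 \to \cO(X_{\kb})^*/\kb^* \to \Pic^0$-type data, or more directly the fact that a unit gives a map to $\bG_m$ and hence (by universality of $\sigma_{X,x}$) a homomorphism $S \to \bG_m$, which must be trivial since $\cO(S) = k$.

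I expect the main obstacle to be the careful bookkeeping in the middle step: showing that a lift $X \to S_\lambda$ of $\sigma_{X,x}$ necessarily refines to a lift $X \to S$ compatibly, so that the kernel of $c$ is \emph{exactly} $\ker \alpha_X^*$ and not merely contained in it. This requires invoking the universal property of the generalized Albanese morphism for the right target group and checking that the relevant square of semi-abelian varieties over $A$ commutes; the subtlety is that $S_\lambda$ is only a \emph{quotient} of $S$ (pushout along $\lambda : T \to \bG_m$), so one must argue that the composite $X \to S_\lambda$ being trivial on the torsor level lifts back through the surjection $S \twoheadrightarrow S_\lambda$ using that the Albanese target $S$ already receives the universal map. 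Everything else — Galois descent, the reduction to $\bG_m$-torsors, and the curve argument controlling $H^0$ of algebraically trivial bundles — is either already available in the excerpt or a direct repetition of the proof of Lemma \ref{lem:com}.
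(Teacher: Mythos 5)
Your overall strategy --- identify $\ker(\alpha_X^*)$ with the image $c(\Lambda)$ of the classifying map of the extension $1\to T\to S\to A\to 0$ and then invoke Proposition \ref{prop:sab} --- is viable and genuinely different from the paper's (which classifies liftings of $\alpha_{X,x}$ to the extensions of $A$ by $M^{\vee}$ for sublattices $M\subset A^{\vee}(k_s)$, and then proves separately, by compactifying $X$, that $\ker(\alpha_X^*)$ is a lattice). But two steps of your argument are broken.

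First, the step you yourself call the crux is circular, and its stated conclusion is false. To prove $\ker(\alpha_X^*)\subseteq c(\Lambda)$ you must start from an \emph{arbitrary} $L\in A^{\vee}(\kb)$ with $\alpha_X^*(L)=0$ and produce $\lambda\in\Lambda$ with $c(\lambda)=L$. Instead you start from $L=c(\lambda)$, which assumes what is to be shown, and then conclude that ``$\lambda$ lies in the kernel of $c$, hence $\lambda=0$''; that would say $c(\Lambda)\cap\ker(\alpha_X^*)=0$, contradicting the easy inclusion $c(\Lambda)\subseteq\ker(\alpha_X^*)$ which you correctly establish just before. The correct argument runs: given $L$ with $\alpha_X^*(L)=0$, form the extension $1\to\bG_m\to G_L\to A\to 0$ of class $L$; the pulled-back $\bG_m$-torsor on $X$ has a section, which after scaling gives a pointed lift $X\to G_L$ of $\alpha_{X,x}$; since $G_L$ is semi-abelian, the universal property of $\sigma_{X,x}$ factors this through a homomorphism $h:S\to G_L$ over $A$, and then $L=c(h|_T)$ by the pushout description of extension classes. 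Done this way, your route would in fact deliver the final assertion (that $\ker(\alpha_X^*)\cong\Lambda$ is a $\Gamma_k$-lattice) for free, bypassing the paper's compactification argument --- but only once this step is repaired.

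Second, you prove only one direction of the equivalence: both of your paragraphs (``One direction\ldots'' and ``For the converse\ldots'') establish the same implication, namely that anti-affineness of $S$ forces $\cO(X_{\kb})^*=\kb^*$. The implication from constant units to anti-affineness is missing. In your framework it would go: if $c(\lambda)=0$ for some $\lambda\neq 0$, the pushout $S_\lambda$ is the split extension $\bG_m\times A$, and the composite $X\to S\to S_\lambda\to\bG_m$ is a unit on $X_{\kb}$; to see that it is nonconstant one must use that the image of $\sigma_{X,x}$ generates $S$ (otherwise $S$ would not be universal), a fact you never invoke. The paper instead shows that $\varphi_S\circ\sigma_{X,x}$ is the universal pointed morphism from $X$ to a torus and identifies that torus with the dual of the $\Gamma_k$-lattice $\cO(X_{\kb})^*/\kb^*$, which gives both directions at once.
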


\begin{proof}
Denote by $\varphi : S \to S/S_{\ant}$ the affinization morphism
(\ref{eqn:aff}). Then $S/S_{\ant}$ is affine and semi-abelian, hence a
torus. Clearly, the composite $\varphi \sigma_{X,x}$ is the
universal morphism from $X$ to a torus, that maps $x$ to the neutral
element. 

Given a $\Gamma_k$-lattice $M$, the morphisms from $X$ to the dual
torus $M^{\vee}$ correspond bijectively to the $\Gamma_k$-equivariant
homomorphisms $M \to \cO(X_{\kb})^*$. Moreover, the exact sequence of 
$\Gamma_k$-modules
$$
1 \longrightarrow \kb^* \longrightarrow \cO(X_{\kb})^* \longrightarrow  
\cO(X_{\kb})^*/\kb^* \longrightarrow 1
$$
is split by the evaluation map at $x \in X(k)$, and 
$\cO(X_{\kb})^*/\kb^*$ is a $\Gamma_k$-lattice. Thus, the morphisms of
pointed varieties 
$$
(X,x) \longrightarrow (M^{\vee},e_{M^{\vee}})
$$ 
correspond bijectively to the $\Gamma_k$-equivariant homomorphisms
$$
M \longrightarrow \cO(X_{\kb})^*/\kb^*.
$$  
In particular, there is a universal such morphism, to the dual torus
of the lattice $\cO(X_{\kb})^*/\kb^*$. This yields the first assertion.

Assuming that $\cO(X_{\kb})^* = \kb^*$, consider a 
sub-$\Gamma_k$-lattice $M \subset A^{\vee}(\kb)$ and the 
corresponding extension 
$$
\CD
1 @>>> M^{\vee} @>>> G @>>> A @>>> 0.
\endCD
$$ 
We regard $G$ as a $M^{\vee}$-torsor over $A$. Then the 
morphisms $\gamma : X \to G$ that lift the Albanese morphism
$\alpha_{X,x} : X \to A$ are identified to the sections of
the pull-back $M^{\vee}$-torsor $X \times_A S \to X$, as in the proof
of Lemma \ref{lem:com}. Such sections exist if and only if the
pull-back map $M \to \Pic(X_{\kb})$ is trivial; moreover,
any two sections differ by a morphism $X \to T$, i.e. 
by a $k$-rational point of $T$. Thus, there exists a unique section
such that the associated morphism $\gamma$ maps $x$ to $e_S$.

As a consequence, the liftings of $\alpha_{X,x}$ to semi-abelian 
varieties over $A$ are classified by the sub-$\Gamma_k$-lattices 
of $\Lambda := \ker(\alpha_X^*)$. We now show that $\Lambda$
is a $\Gamma_k$-lattice, thereby completing the proof. 
For this, we may assume that $k$ is algebraically closed.

If $X$ is complete, then $\Lambda$ is trivial by Lemma
\ref{lem:com}. In the general case, let $i : X \to \overline{X}$ be an
open immersion into a complete variety. We may assume that
$\alpha_{X,x}$ extends to a morphism 
$\alpha_{\overline{X},x} : \overline{X} \to A$; then
$\alpha_{\overline{X},x}$ is the Albanese morphism of
$(\overline{X},i(x))$. Since $\alpha_{\overline{X}}^*$ is injective,
$\Lambda$ is identified to a subgroup of the kernel of
$$
i^* : \Pic(\overline{X}) \to \Pic(X).
$$
But $\ker(i^*)$ is the group of Cartier divisors with support in 
$\overline{X} \setminus X$, as $\cO(X)^* = k^*$. In particular, the
abelian group $\ker(i^*)$ is free of finite rank, and hence so is
$\Lambda$.
\end{proof}

By another result of Serre (see \cite[Thm.~8]{Se58a}), 
a pointed variety $(X,x)$ admits a universal morphism to 
a commutative algebraic group,
$$
\gamma_{X,x} : X \longrightarrow G, \quad x \longmapsto e_G
$$
if and only if $\cO(X) = k$, that is, $X$ is anti-affine. 
Then $G$ is also anti-affine, as this group is generated over 
$\kb$ by the image of $X$. In positive characteristics, the universal
group $G$ is just the generalized Albanese variety, by Proposition
\ref{prop:pos}. In characteristic $0$, this group may be described as 
follows: 

\begin{proposition}\label{prop:dat}
Let $(X,x)$ be a pointed anti-affine variety over a field $k$ of
characteristic $0$ and let $G$ be the associated anti-affine
group. Then $G$ is classified by the triple $(A,\Lambda,V)$ where $A$
and $\Lambda$ are as in the preceding proposition, and $V$ is the
kernel of the pull-back map
$\alpha_X^* : H^1(A,\cO_A) \longrightarrow H^1(X,\cO_X)$.
\end{proposition}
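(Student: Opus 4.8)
The plan is to follow the template already established in Proposition \ref{prop:uni}, now working with vector extensions rather than tori. Since $k$ has characteristic $0$, Proposition \ref{prop:pos} does not apply and the universal group $G$ need not be semi-abelian; instead, by the classification (Theorem \ref{thm:cla}), $G$ is determined by a pair $(\Lambda', V')$ over its Albanese variety $A$. The first step is to identify $A$ and $\Lambda'$ with the data of Proposition \ref{prop:uni}: the Albanese morphism of $(X,x)$ is the universal morphism to an abelian variety, so $A_X = A$ is forced, and the semi-abelian part of $G$ receives a morphism from $X$, hence is dominated by the generalized Albanese variety $S_X$; conversely $S_X$, being anti-affine (by Proposition \ref{prop:uni}, using $\cO(X) = k$ so a fortiori $\cO(X_{\kb})^* = \kb^*$) and receiving $X$, maps to $G$. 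A short argument using the universal properties shows the torus part of $G$ equals that of $S_X$, so $\Lambda' = \Lambda = \ker\big(\alpha_X^* : A^{\vee}(\kb) \to \Pic(X_{\kb})\big)$.

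The substance of the proof is identifying $V'$ with $\ker\big(\alpha_X^* : H^1(A,\cO_A) \to H^1(X,\cO_X)\big)$. Here I would run the mirror of the argument in Proposition \ref{prop:uni}. By Proposition \ref{prop:vec}(iii), the anti-affine vector extensions of $A$ are classified by subspaces of $H^1(A,\cO_A)$; dually, given a subspace $W \subset H^1(A,\cO_A)$, the corresponding extension $0 \to W^* \to G_W \to A \to 0$ is classified by the surjection $H^1(A,\cO_A)^* \to W^*$ transpose to the inclusion $W \hookrightarrow H^1(A,\cO_A)$. Regarding $G_W$ as a $W^*$-torsor over $A$, a morphism $\gamma : X \to G_W$ lifting the Albanese morphism $\alpha_{X,x}$ exists if and only if the pull-back of this torsor to $X$ is trivial. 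One checks — using that a $\bG_a$-torsor pulled back is trivial iff the corresponding class in $H^1(X,\cO_X)$ vanishes, exactly as in the $\bG_a$ case at the end of the proof of Lemma \ref{lem:com} — that this triviality is equivalent to $W \subset \ker(\alpha_X^* : H^1(A,\cO_A) \to H^1(X,\cO_X)) =: V$. Moreover, as in Proposition \ref{prop:uni}, any two such liftings differ by a morphism $X \to W^*$, i.e. by a $k$-rational point of $W^*$ since $\cO(X) = k$ forces such morphisms to be constant; so there is a unique lifting sending $x$ to the origin.

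Combining this with the semi-abelian analysis (via the cartesian square \eqref{eqn:cart} and the isomorphism \eqref{eqn:gtu}, which present $G$ as the fibre product over $A$ of its semi-abelian and vector quotients), the liftings of $\alpha_{X,x}$ to anti-affine groups over $A$ are classified by pairs $(\Lambda'', W)$ with $\Lambda'' \subset \Lambda$ a sub-$\Gamma_k$-lattice and $W \subset V$ a subspace, and the universal such lifting corresponds to the maximal pair $(\Lambda, V)$. This exhibits $G$ as the anti-affine group classified by $(A, \Lambda, V)$, as claimed. It remains only to confirm that $V$ is genuinely finite-dimensional, i.e. a legitimate subspace in the sense of the classification; but $V$ is a subspace of the finite-dimensional space $H^1(A,\cO_A)$, so this is automatic.

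The main obstacle I anticipate is the bookkeeping in the last step: one must verify that the fibre product decomposition \eqref{eqn:gtu} is compatible with the universal property in the sense that a morphism $X \to G$ is equivalent to a compatible pair of morphisms $X \to G/U$ and $X \to G/T$ agreeing over $A$, and that "universal among such pairs" means "universal in each coordinate separately." This follows formally from the universal property of fibre products together with the fact (from Propositions \ref{prop:sab}(ii) and \ref{prop:vec}(iii)) that the semi-abelian and vector parts are classified independently by a lattice and a subspace respectively, so that the join of two liftings is again a lifting. Once this compatibility is in hand, the identification of the triple is immediate from the two separate universal constructions.
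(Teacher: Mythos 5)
Your proposal is correct and follows exactly the route the paper itself indicates: the paper's entire proof of Proposition \ref{prop:dat} is the one-line remark that the argument is analogous to that of Proposition \ref{prop:uni}, using the fibre product decomposition (\ref{eqn:gtu}) and the classification of anti-affine vector extensions from Proposition \ref{prop:vec}, which is precisely the torsor-lifting argument you carry out for the $W^*$-torsor and then glue over $A$. Your write-up in fact supplies more detail than the paper does, and the details (triviality of the pulled-back vector-group torsor being governed by $H^1(X,\cO_X)$, uniqueness of the pointed lifting via $\cO(X)=k$, and the compatibility of liftings with the fibre product) are all sound.
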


The proof is analogous to that of Proposition \ref{prop:uni},
taking into account the isomorphism (\ref{eqn:gtu}) and the 
structure of anti-affine vector extensions of $A$.

\begin{remarks}
(i) The associated data $A,\Lambda,V$ may be described explicitly 
in terms of completions, for smooth varieties in characteristic $0$.
Namely, given such a variety $X$, there exists an open immersion 
$i : X \to \overline{X}$ where $\overline{X}$ is a smooth complete
variety. Then $\Pic^0(\overline{X})$ is an abelian variety with dual
the Albanese variety $A_X = A_{\overline{X}}$.

If $\cO(X_{\kb})^*  = \kb^*$, then the $\Gamma_k$-lattice 
$\Lambda$ of Proposition \ref{prop:uni} is the group of divisors 
supported in $\overline{X} \setminus X$ and algebraically equivalent
to $0$, by the arguments in \cite[Sec.~1]{Se58b}. 

Under the (stronger) assumption that $\cO(X) = k$, the subspace
$V \subset H^1(A,\cO_A)$ of Proposition \ref{prop:dat} equals
$$
H^1_{\overline{X} \setminus X}(\overline{X}, \cO_{\overline{X}}) 
= H^0(\overline{X}, i_*(\cO_X)/\cO_{\overline{X}}),
$$
as follows from similar arguments.

\smallskip

\noindent
(ii) Dually, one may also consider morphisms from varieties, or
schemes, to a prescribed anti-affine group $G$. In fact, such a group
admits a modular interpretation, which generalizes the duality of
abelian varieties.

To state it, recall that any abelian variety $A$ classifies the
invertible sheaves on $A^{\vee}$, algebraically equivalent to $0$ and
equipped with a rigidification along the zero section.

The universal extension $E(A)$ has also a modular interpretation: it
classifies the algebraically trivial invertible sheaves on $A^{\vee}$,
equipped with a rigidification along the first infinitesimal
neighbourhood $T_0(A^{\vee})$ (see \cite[Prop.~2.6.7]{MM74}).    

It follows that the algebraically trivial invertible sheaves
on $A^{\vee}$, equipped with rigidifications along a basis of the lattice
$\Lambda$ and along the subspace $V \subset T_0(A^{\vee})$, are classified 
by an anti-affine algebraic group over $A$ with data $(\Lambda,V)$.
\end{remarks}

\section{Some consequences}

\subsection{The Rosenlicht decomposition}
\label{subsec:ros}

We first obtain a variant of a structure theorem for algebraic groups
due to Rosenlicht (see \cite[Cor.~5, p.~440]{Ro56}), in the setting  
of group schemes.

\begin{proposition}\label{prop:ros}
Let $G$ be a connected group scheme over a field $k$. Then:

\smallskip

\noindent 
{\rm (i)} The group law of $G$ yields an exact sequence of group 
schemes
\begin{equation}\label{eqn:ros}
1 \longrightarrow G_{\aff} \cap G_{\ant} \longrightarrow
G_{\aff} \times G_{\ant} \longrightarrow G \longrightarrow 1.
\end{equation}
In other words, we have the decomposition $G = G_{\aff} \, G_{\ant}$. 

\smallskip

\noindent
{\rm (ii)} The connected subgroup scheme 
$(G_{\ant})_{\aff} \subset G_{\ant}$ 
is an algebraic group, contained in $G_{\aff}$; moreover, 
the quotient $(G_{\aff} \cap G_{\ant})/(G_{\ant})_{\aff}$ is finite.

\smallskip

\noindent
{\rm (iii)} The quotient group scheme $G' := G/(G_{\ant})_{\aff}$ 
has the decomposition $G' = G'_{\ab} \, G'_{\aff}$ where 
$G'_{\ab} = G_{\ant}/(G_{\ant})_{\aff}$ is the largest abelian 
subvariety of $G'$, and $G'_{\aff} = G_{\aff}/(G_{\ant})_{\aff}$.

\smallskip

\noindent
{\rm (iv)} Any subgroup scheme $H \subset G$ such that 
$G = G_{\aff} \, H$ contains $G_{\ant}$. 
\end{proposition}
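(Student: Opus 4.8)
The plan is to prove the four parts in order, each one feeding into the next. For part (i), recall that $G_{\aff}$ is normal in $G$ with $G/G_{\aff}$ an abelian variety, and that $G_{\ant}$ is central (hence normal). The multiplication map $m : G_{\aff} \times G_{\ant} \to G$ is a homomorphism of group schemes since $G_{\ant}$ is central, and its kernel is the antidiagonal copy of $G_{\aff} \cap G_{\ant}$. So everything reduces to showing $m$ is surjective, i.e. that $G = G_{\aff} \, G_{\ant}$. For this I would pass to the quotient $G/G_{\aff} = A(G)$ and show the composite $G_{\ant} \hookrightarrow G \to A(G)$ is surjective: its image is a subgroup scheme $B \subset A(G)$, and the quotient $A(G)/B$ is at once a quotient of the abelian variety $A(G)$ (so an abelian variety or, more precisely, complete and connected) and a quotient of $G/G_{\ant}$ (so affine, by definition of $G_{\ant}$ as giving the affinization); hence $A(G)/B$ is trivial, so $B = A(G)$ and $m$ is surjective. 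This gives the exact sequence (\ref{eqn:ros}).

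For part (ii): $(G_{\ant})_{\aff}$ is a connected affine subgroup scheme of the algebraic group $G_{\ant}$ (which is an algebraic group by the results quoted in the introduction), hence is itself an algebraic group. It is contained in $G_{\aff}$ because $G_{\aff}$ is the \emph{smallest} normal connected subgroup scheme with abelian variety quotient: indeed $(G_{\ant})_{\aff}$ is connected and affine and normal in $G$ (being characteristic in the central — hence normal — subgroup $G_{\ant}$), and $G/(G_{\ant})_{\aff}$ has $G_{\ant}/(G_{\ant})_{\aff}$, an abelian variety, sitting inside with affine quotient $G/G_{\ant}$; one checks the minimality of $G_{\aff}$ forces $(G_{\ant})_{\aff} \subset G_{\aff}$ — alternatively, apply the universal property directly since $(G_{\ant})_{\aff} \subset G_{\ant}$ and the composite $(G_{\ant})_{\aff} \to G \to A(G)$ is trivial as $A(G)$ is complete while $(G_{\ant})_{\aff}$ is affine and connected. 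Thus $(G_{\ant})_{\aff} \subset G_{\aff} \cap G_{\ant}$. For the finiteness of the quotient $(G_{\aff} \cap G_{\ant})/(G_{\ant})_{\aff}$: from (\ref{eqn:ros}) we get $\dim G = \dim G_{\aff} + \dim G_{\ant} - \dim(G_{\aff} \cap G_{\ant})$, and separately Chevalley gives $\dim G = \dim G_{\aff} + \dim A(G)$; combining, $\dim(G_{\aff}\cap G_{\ant}) = \dim G_{\ant} - \dim A(G)$. Now $G_{\ant}/(G_{\ant})_{\aff}$ is an abelian variety which surjects onto $B = A(G)$ (from part (i)) with finite kernel — in fact an isogeny — because $\ker$ is affine (quotient of $(G_{\ant})_{\aff}$'s complement... more directly, the kernel $(G_{\aff}\cap G_{\ant})/(G_{\ant})_{\aff}$ is both affine, as a subquotient of $G_{\aff}$, and complete, as a subgroup scheme of the abelian variety $G_{\ant}/(G_{\ant})_{\aff}$, hence finite). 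So $\dim G_{\ant} - \dim (G_{\ant})_{\aff} = \dim A(G)$, giving $\dim(G_{\aff}\cap G_{\ant}) = \dim(G_{\ant})_{\aff}$, and the quotient is finite.

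For part (iii): pass to $G' = G/(G_{\ant})_{\aff}$. Its image of $G_{\ant}$ is $G_{\ant}/(G_{\ant})_{\aff}$, an abelian variety, and I would show it equals $G'_{\ab}$, the largest abelian subvariety. That it is the largest follows because any abelian subvariety of $G'$ maps trivially to $G'/G'_{\ant} = (G/(G_{\ant})_{\aff})/(\text{image of } G_{\ant}/(G_{\ant})_{\aff})$ — using that $G'_{\ant}$ contains the image of $G_{\ant}$ and is anti-affine hence by part (ii) equals $G_{\ant}/(G_{\ant})_{\aff}$ exactly (its own affine part is trivial) — and an abelian variety with affine complement lands in any such anti-affine subgroup. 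Then $G'_{\aff} = G_{\aff}/(G_{\ant})_{\aff}$ is checked from the definitions (it is normal, connected, affine, with abelian variety quotient $A(G) = A(G')$, and minimal), and $G' = G'_{\ab}\, G'_{\aff}$ follows from part (i) applied to $G'$, noting $G'_{\aff} \cap G'_{\ant}$ is now finite. For part (iv): if $H \subset G$ with $G = G_{\aff}\, H$, then $H \to G/G_{\aff} = A(G)$ is surjective. The affinization homomorphism $\varphi_G : G \to G/G_{\ant}$ restricted to $H$ has image a subgroup scheme $H'$; the composite $H \to G \to G/G_{\ant} \to (G/G_{\ant})/(\text{image of }G_{\aff})$ — note $G/(G_{\aff}G_{\ant}) = A(G)/B$ is trivial by part (i), so $G_{\aff}$ surjects onto $G/G_{\ant}$ — wait, this shows $H$ need not surject onto $G/G_{\ant}$. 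Instead: $G_{\ant}$ is central, so $H G_{\ant}$ is a subgroup scheme, and $G = G_{\aff} H \subset G_{\aff}(HG_{\ant})$, while $HG_{\ant} \supset$... I would argue that $HG_{\ant}/G_{\ant}$ is the image of $H$ in $G/G_{\ant}$, and since $G = G_{\aff} H$ we get $G/G_{\ant} = (\text{image of }G_{\aff})\cdot(\text{image of }H)$; but the relevant point is rather to show $G_{\ant} \subset H$ directly by considering $G/H_{\text{something}}$ — cleanly, consider the smallest normal subgroup; I expect this last part to follow by a short diagram chase once (i)–(iii) are in place, using that $G_{\ant}$ is anti-affine and $G/H$-type quotients are affine when $H \supset G_{\aff}\cap$(enough).

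The main obstacle I anticipate is part (iv), specifically getting the direction of the inclusion right: one must show $G_{\ant}$ is forced to lie inside any $H$ with $G = G_{\aff} H$, and the clean argument is to observe that $G_{\ant}/(G_{\ant}\cap H)$ injects into $G/H$; the latter is a quotient of $G/H$, but since $G = G_{\aff} H$ the natural map $G_{\aff} \to G/H$ is surjective, so $G/H$ is affine; meanwhile $G_{\ant}/(G_{\ant}\cap H)$ is anti-affine as a quotient of $G_{\ant}$; an affine anti-affine group is trivial, forcing $G_{\ant} = G_{\ant}\cap H$, i.e. $G_{\ant} \subset H$. The subtlety is only in checking $G/H$ is a scheme and the affineness claim — for the affineness, $G_{\aff}$ surjects onto it, and the image of an affine group scheme under a homomorphism is affine — which is routine.
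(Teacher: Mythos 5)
Your parts (i) and (iii) follow the paper's route and are fine. There are, however, two genuine gaps, one in (ii) and one in (iv).

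In (ii), you justify that $(G_{\ant})_{\aff}$ is an algebraic group (i.e.\ a \emph{smooth} group scheme, in this paper's conventions) by saying it is a connected affine subgroup scheme of the algebraic group $G_{\ant}$, ``hence is itself an algebraic group.'' That inference is false in positive characteristic: subgroup schemes of smooth group schemes need not be smooth (e.g.\ $\mu_p \subset \bG_m$), and the paper's Example \ref{ex:npe} even exhibits a smooth connected commutative $G$ over a non-perfect field whose $G_{\aff}$ is non-smooth. The smoothness of $(G_{\ant})_{\aff}$ genuinely needs the classification input: $G_{\ant}$ is anti-affine, hence by Proposition \ref{prop:pos} it is a semi-abelian variety in positive characteristic, so $(G_{\ant})_{\aff}$ is a torus (and in characteristic $0$ everything is smooth). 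The rest of your (ii) --- the inclusion $(G_{\ant})_{\aff} \subset G_{\aff}$ via rigidity, and finiteness of the quotient because it is both affine and complete --- matches the paper; the dimension count is an unnecessary detour.

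In (iv), your ``clean argument'' hinges on $G/H$ being affine because $G_{\aff}$ surjects onto it. But $H$ is not assumed normal, so $G/H$ is only a homogeneous space, the surjection $G_{\aff} \to G/H$ is not a homomorphism of group schemes, and homogeneous spaces under affine groups need not be affine (flag varieties $G/B$ satisfy $G = G_{\aff}B$ and are projective). So the step ``the image of an affine group scheme under a homomorphism is affine'' does not apply, and this is exactly where your proof breaks. The paper sidesteps this by quotienting by a \emph{central} subgroup: writing $G = G_{\aff}\, H^0_{\aff}\, H^0_{\ant}$ with $H^0_{\ant} \subset G_{\ant}$ central, one gets $G/H^0_{\ant} \simeq (G_{\aff}\, H^0_{\aff})/\bigl(H^0_{\ant} \cap (G_{\aff}\, H^0_{\aff})\bigr)$, a quotient group scheme of an affine group scheme, hence affine; the universal property of the affinization $G \to G/G_{\ant}$ then forces $G_{\ant} \subset H^0_{\ant} \subset H$. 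Your underlying idea --- play an anti-affine quotient of $G_{\ant}$ against an affine quotient of $G$ --- is the right one, but it must be routed through a normal (central) subgroup for the affineness step to be legitimate.
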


\begin{proof}
(i) Since $G_{\aff}$ is a normal subgroup scheme of $G$, and 
$G_{\ant}$ is contained in the centre of $G$, we see that 
the multiplication map $G_{\aff} \times G_{\ant} \to G$ is a
homomorphism with kernel isomorphic to $G_{\aff} \cap G_{\ant}$; 
the image $G_{\aff} \, G_{\ant}$ is a normal subgroup scheme of $G$ 
by \cite[Exp.~VIA 5.3, 5.4]{SGA3}. 
The quotient $G /(G_{\aff} \, G_{\ant})$ is affine, as 
a quotient of $G/G_{\ant}$; but it is also an abelian variety, as 
a quotient of $G/G_{\aff}$. Thus, this quotient is trivial.

(ii) The smoothness of $(G_{\ant})_{\aff}$ follows from 
Proposition \ref{prop:pos}. By rigidity (see e.g. 
\cite[Lem.~2.2]{Co02}), every homomorphism from $(G_{\ant})_{\aff}$ 
to an abelian variety is trivial. As a consequence, 
$(G_{\ant})_{\aff} \subset G_{\aff}$.

The scheme $(G_{\aff} \cap G_{\ant})/(G_{\ant})_{\aff}$ is 
affine (as a quotient of a subgroup scheme of $G_{\aff}$) 
and proper (as a subgroup scheme of the abelian variety
$G_{\ant}/(G_{\ant})_{\aff}$). Hence this scheme is finite.

(iii) follows readily from (i) and (ii).

(iv) Note that $G = G_{\aff} \, H^0$, as $G$ is connected. Thus, 
$$
G = G_{\aff} \, H^0_{\aff}\,  H^0_{\ant}
$$
and $H^0_{\ant} \subset G_{\ant}$; in particular, $H^0_{\ant}$
is contained in the centre of $G$. On the other hand, 
$G_{\aff} \, H^0_{\aff}$ is affine, so that
$$
G/H^0_{\ant} \simeq (G_{\aff} \, H^0_{\aff}) / 
\big( H^0_{\ant} \cap(G_{\aff} \, H^0_{\aff}) \big) 
$$
is affine as well. Since the quotient homomorphism 
$G \to G/G_{\ant}$ is the affinization, it follows that 
$H^0_{\ant}$ contains $G_{\ant}$.
\end{proof}

Next, we consider the functorial properties of the Rosenlicht 
decomposition. By the results of \cite[III.3.8]{DG70}, 
the formation of $G_{\ant}$ commutes with base change to 
arbitrary field extensions, and with homomorphisms of group 
schemes. Also, note that the homomorphism (\ref{eqn:aff}) 
$\varphi_G : G \to \Spec \cO(G) = G/G_{\ant}$ depends only on $G$
regarded as a scheme. In particular, $G_{\ant}$ depends only on the 
pointed scheme $(G,e_G)$.

These properties are also satisfied by $G_{\aff}$ under additional 
assumptions. Specifically, if $G$ is a connected algebraic group
over a perfect field $k$, then $G_{\aff}$ is the largest connected
affine algebraic subgroup of $G$; the formation of $G_{\aff}$ commutes 
with base change to any perfect field extension of $k$ and with 
homomorphisms of algebraic groups (see \cite{Co02} for these 
results). The quotient homomorphism $\alpha_G : G \to G/G_{\aff}$ 
is the Albanese morphism of the pair $(G,e_G)$. In particular,
$G_{\aff}$ depends only on the pointed variety $(G,e_G)$.   

The assumption that $k$ is perfect cannot be omitted in view of the 
following example, obtained by a construction of Raynaud (see
\cite[Exp.~XVII, App.~III, Prop.~5.1]{SGA3}):

\begin{example}\label{ex:npe}
Let $k$ be a non-perfect field of characteristic $p > 0$
and choose a finite, non-trivial field extension $K/k$ such that 
$K^p \subset k$. Given a non-trivial abelian variety $A$ over 
$k$, let $A_K := A \otimes_k K$ (a non-trivial abelian variety 
over $K$) and 
$$
G := \Pi_{K/k}(A_K)
$$
where $\Pi_{K/k}$ denotes the Weil restriction; in other words, 
$G$ is the unique $k$-scheme such that 
\begin{equation}\label{eqn:uni}
G(R) = A_K(R \otimes_k K)
\end{equation} 
for any $k$-algebra $R$. Then $G$ is a commutative connected 
algebraic $k$-group, as follows e.g. from the results of 
\cite[A.2]{Oe84} that we shall use freely.

We claim that 
\begin{equation}\label{eqn:prK}
G_K = U \times A_K
\end{equation} 
where $U$ is a connected unipotent algebraic $K$-group; 
in particular,  $(G_K)_{\aff} = U$ and $(G_K)_{\ant} = A_K$. Moreover, 
$G_{\ant} = A$ but $G_{\aff}$ is not smooth, and 
$(G_{\aff})_K \neq (G_K)_{\aff}$.

Indeed, for any $K$-algebra $R$, we have
$$
G_K(R) = G(R) = A_K \big( R \otimes_K (K \otimes_k K) \big)
$$
and $K \otimes_k K$ is a finite-dimensional $K$-algebra.
The multiplication map $\mu : K \otimes_k K \to K$ yields 
an exact sequence
$$
0 \to \fm \to K \otimes_k K  \to K \to 0
$$
and the ideal $\fm$ is nilpotent, as 
$(x \otimes 1 - 1 \otimes x)^p = 0$ for any $x \in K$.
This yields a functorial morphism  $G_K(R) \to A_K(R)$ and, 
in turn, an extension of $K$-group schemes
\begin{equation}\label{eqn:exa}
\CD
1 @>>> U @>>> G_K @>{\alpha}>> A_K @>>> 0 
\endCD
\end{equation}
where $U$ has a filtration with subquotients isomorphic to the Lie 
algebra of $A_K$. In particular, $U$ is smooth, connected and
unipotent. Moreover, $\alpha$ is the Albanese morphism of
$(G_K,e_{G_K})$.

For any $k$-scheme $S$, the map $G(S) \to A_K(S_K)$ 
that sends any $f : S \to G$ to $\alpha f_K : S_K \to A_K$
is bijective. This yields a morphism $\beta : A \to G$ such that 
$\alpha \beta_K$ is the identity map of $A_K$. It follows
that $\beta$ is a closed immersion of group schemes; we shall
identify $A$ with $\beta(A)$, and likewise $A_K$ with
$\beta_K(A_K)$. As $\beta_K$ splits the extension (\ref{eqn:exa}), 
this yields the decomposition (\ref{eqn:prK}).

As a consequence, $G_{\ant} = A$ and hence $G = G_{\aff} \, A$. 
Also, $G_{\aff}$ is not smooth; indeed, any morphism from a 
connected affine algebraic group to $G$ is constant, as follows from
the equality (\ref{eqn:uni}) together with \cite[Lem.~2.3]{Co02}. 
Thus, the finite group scheme $G_{\aff} \cap A$ is non-trivial:
otherwise, $G \simeq G_{\aff} \times A$, so that $G_{\aff}$ 
would be smooth. In particular, 
$(G_{\aff})_K \neq U = (G_{\aff})_K$.

In particular, $G/G_{\aff}$ is the quotient of $A$ by a non-trivial
subgroup scheme. On the other hand, the quotient map 
$\alpha_G : G \to G/G_{\aff}$ is easily seen to be the Albanese
morphism of $(G,e_G)$ considered in \cite{Wit06}. Thus, the formation
of the Albanese morphism does not commute with arbitrary field
extensions.  
\end{example}

\subsection{Structure of connected commutative algebraic groups}
\label{subsec:com}

We first obtain a simple characterization of non-affine group 
schemes that are minimal for this property:

\begin{proposition}\label{prop:min}
The following conditions are equivalent for a non-trivial
group scheme $G$:

\smallskip

\noindent
{\rm (i)} $G$ is non-affine and every subgroup scheme 
$H \subset G$, $H \neq G$ is affine.

\smallskip

\noindent
{\rm (ii)}
$G$ is anti-affine and has no non-trivial anti-affine
subgroup.

\smallskip

\noindent
{\rm (iii)} $G$ is anti-affine and the abelian variety 
$A(G) = G/G_{\aff}$ is simple.

\smallskip

If one of these conditions holds, then either $G$ is an abelian 
variety or $G$ contains no complete subvariety of positive dimension.
\end{proposition}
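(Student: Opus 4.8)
The plan is to establish the three equivalences $(i)\Leftrightarrow(ii)\Leftrightarrow(iii)$ first, then deduce the dichotomy about complete subvarieties.

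For $(i)\Rightarrow(ii)$: if $G$ is non-affine and all proper subgroup schemes are affine, then $G_{\ant}$ is a non-affine subgroup scheme, so $G_{\ant}=G$, i.e. $G$ is anti-affine; and any proper anti-affine subgroup would be affine and anti-affine, hence trivial. For $(ii)\Rightarrow(iii)$: suppose $G$ is anti-affine but $A(G)$ is not simple, so there is a proper non-trivial abelian subvariety $B\subset A(G)$; pulling back the extension $1\to G_{\aff}\to G\to A(G)\to 0$ along $B\hookrightarrow A(G)$ gives a subgroup scheme $G_B\subset G$ with $(G_B)_{\aff}=G_{\aff}$ and $G_B/G_{\aff}\simeq B$. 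Here I would invoke the classification (Theorem \ref{thm:cla}, or more directly Propositions \ref{prop:sab} and \ref{prop:vec}): an anti-affine group over $A(G)$ with data $(\Lambda,V)$ restricts, over $B^\vee$ (a quotient of $A(G)^\vee$), to the anti-affine group over $B$ with data given by the images of $\Lambda$ and $V$. Since $G$ is anti-affine, $\Lambda$ spans and $V$ is a subspace, and the images are still a spanning lattice and a subspace, so $G_B$ is a non-trivial proper anti-affine subgroup — contradiction. (In positive characteristic only $\Lambda$ is involved; in characteristic $0$ one uses the pair.) For $(iii)\Rightarrow(i)$: an anti-affine $G$ is non-affine since $G/G_{\aff}=A(G)$ is a non-trivial abelian variety; and if $H\subsetneq G$ is a subgroup scheme, I must show $H$ is affine. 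If not, $H_{\ant}$ is a non-trivial anti-affine subgroup of $G$, so $H_{\ant}/(H_{\ant})_{\aff}$ is a non-trivial abelian subvariety of $A(G)$; by simplicity it equals $A(G)$, so $G/H_{\ant}$ is affine, forcing $H_{\ant}\supset G_{\ant}=G$, whence $H=G$, a contradiction.

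For the final dichotomy: assume $G$ satisfies $(iii)$ and contains a complete subvariety $Y$ of positive dimension; translating, I may assume $e_G\in Y$, and let $H\subset G$ be the subgroup scheme generated by $Y$. Then $H$ is non-trivial, and since it is generated by the complete variety $Y$, its image $\alpha_G(H)$ in $A(G)$ is a non-trivial abelian subvariety, hence all of $A(G)$ by simplicity, so $G=G_{\aff}H$; by Proposition \ref{prop:ros}(iv) this gives $H\supset G_{\ant}=G$, so $H=G$. It remains to see $G$ is then an abelian variety: $H=G$ is generated by the complete variety $Y$, so $G=G_{\aff}H$ again but now I want completeness of $G$ itself. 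Here the cleanest route is that $G$, being generated by a complete subvariety, has affine part $G_{\aff}$ that is also generated (over $\kb$) by complete subvarieties of $G_{\aff}$, but a connected affine algebraic group containing a complete curve is trivial (any morphism from a complete variety to an affine variety is constant); so $G_{\aff}$ is trivial and $G=A(G)$ is an abelian variety.

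The main obstacle I expect is the restriction argument in $(ii)\Rightarrow(iii)$: one must verify carefully that the classifying datum $(\Lambda,V)$ of an anti-affine group over $A(G)$ restricts, along the pullback to a proper abelian subvariety $B$, to a datum that is still ``anti-affine'' (spanning lattice, and a subspace of $H^1(B,\cO_B)$), so that $G_B$ really is a proper non-trivial anti-affine subgroup. This uses that the dual of $B\hookrightarrow A(G)$ is a surjection $A(G)^\vee\twoheadrightarrow B^\vee$ and that pullback of invertible sheaves (resp.\ of $H^1(-,\cO)$) is compatible with the constructions of Propositions \ref{prop:sab} and \ref{prop:vec}; once this compatibility is set up, the rest is formal. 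An alternative that sidesteps the classification entirely is to argue directly with $G_B$: it is a semi-abelian-by-vector extension of $B$ sitting inside the anti-affine $G$, and a quotient of $G$ shows $\cO(G_B)=\kb$, but this requires roughly the same sheaf-theoretic bookkeeping as in the proof of Proposition \ref{prop:quot}, so invoking the classification is the more economical path.
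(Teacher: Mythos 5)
Your cycle of implications is sound in outline, and your arguments for (i)$\Rightarrow$(ii) and (iii)$\Rightarrow$(i) match the paper's. However, the step you yourself identified as the crux of (ii)$\Rightarrow$(iii) does not work as stated: the pullback $G_B$ of a non-trivial proper abelian subvariety $B \subset A(G)$ need \emph{not} be anti-affine. The classifying map of $G_B$ as an extension of $B$ by $T$ is the composite $\Lambda \stackrel{c}{\to} A^{\vee}(k_s) \to B^{\vee}(k_s)$, and this composite can fail to be injective even when $c$ is: take $A = E_1 \times E_2$ a product of elliptic curves, $\Lambda = \bZ$ with $c(1)$ a non-torsion point of $\{0\} \times E_2^{\vee}$, and $B = E_1 \times \{0\}$; then $G \simeq E_1 \times G_2$ is anti-affine, but $G_B \simeq E_1 \times \bG_m$ is not. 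So ``the images are still a spanning lattice'' is false in general, and the classification does not deliver the contradiction. The repair is both simpler and classification-free, and is exactly what the paper does: $G_B$ surjects onto the non-trivial abelian variety $B$, hence is non-affine, hence $(G_B)_{\ant}$ is a non-trivial anti-affine subgroup, and it is proper because $G_B = \alpha_G^{-1}(B) \subsetneq G$. That alone contradicts (ii); no restriction of classifying data is needed.

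The final dichotomy also contains an unjustified step: after concluding $H = G$ you assert that $G_{\aff}$ is ``generated by complete subvarieties of $G_{\aff}$,'' which does not follow from $G$ being generated by a complete subvariety. The missing ingredient is the fact, cited in the paper from [SGA3, Exp.~VIB, Prop.~7.1] and already used in the proof of Lemma \ref{lem:com}, that the subgroup scheme generated by a complete subvariety is itself \emph{complete}. With that in hand the whole dichotomy is immediate and does not even need your reduction to $H = G$ via Proposition \ref{prop:ros}(iv): either $H = G$, so $G$ is complete, smooth and connected, hence an abelian variety; or $H \neq G$, so $H$ is affine by (i) and complete, hence finite, contradicting $\dim Y > 0$. (Your observation that $\alpha_G(Y)$ cannot be a point because the fibres of $\alpha_G$ are affine is correct but becomes unnecessary once completeness of $H$ is available.)
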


\begin{proof}
(i)$\Leftrightarrow$(ii) follows easily from the fact that a 
group scheme $H$ is affine if and only if $H^0_{\ant}$ is trivial.

(ii)$\Rightarrow$(iii) Assume that $A(G)$ contains a non-trivial 
abelian variety $B$, and denote by $H$ the pull-back of $B$ in $G$. 
Then $H_{\ant}$ is a non-trivial subgroup of $G$, a contradiction.

(iii)$\Rightarrow$(ii) Let $H$ be an anti-affine subgroup of $G$.
Then $H_{\aff} \subset G_{\aff}$, as $G_{\aff}$ is the largest connected 
affine subgroup of $G$; hence $A(H)$ is identified with a subgroup of 
$A(G)$. Thus, either $A(H)$ is trivial so that $H$ is affine, or 
$A(H) = A(G)$ so that $G_{\aff}\, H = G$. In the latter case, $H = G$ 
by Proposition \ref{prop:ros}.

Under one of these conditions, consider the algebraic subgroup 
$H \subset G$ generated by a complete subvariety of $G$. Then $H$ 
is complete as well (see e.g. \cite[Exp.~VIB, Prop.~7.1]{SGA3});
thus, either $H = G$ or $H$ is trivial.
\end{proof}

Next, we obtain a decomposition of connected commutative group schemes
over perfect fields:

\begin{theorem}\label{thm:com}
Let $G$ be a connected commutative group scheme over a perfect field 
$k$. Then there exist a subtorus $T \subset G$ and a connected unipotent
subgroup scheme $U \subset G$ such that the group law of $G$ induces an 
isogeny
\begin{equation}\label{eqn:com}
f : G_{\ant} \times T \times U \longrightarrow G.
\end{equation}
Moreover, $T$ is unique up to isogeny, and $U$ is unique up to 
isomorphism; if $G$ is an algebraic group, then so is $U$.
\end{theorem}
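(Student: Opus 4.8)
The plan is to use the Rosenlicht decomposition (Proposition \ref{prop:ros}) to write $G = G_{\ant} \, G_{\aff}$ with $G_{\aff} \cap G_{\ant}$ finite, and then decompose the commutative affine group $G_{\aff}$. First I would recall that over a perfect field $k$, any connected commutative affine algebraic group scheme $G_{\aff}$ is the direct product of its maximal subtorus $T$ and its unipotent radical $U$, which here is a connected unipotent subgroup scheme; this is classical (see \cite{Bo91}), and $G_{\aff} = T \times U$ gives the canonical decomposition already used in the characteristic-$0$ classification (cf.~(\ref{eqn:prod})). However, one must be slightly careful: if $G$ is merely a group scheme (not smooth), then $G_{\aff}$ need not be smooth, so one should instead work with $(G_{\aff})_{\red}$ or argue that $T$ and $U$ can be chosen so that the multiplication $T \times U \to G_{\aff}$ is at least an isogeny. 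Over a perfect field the reduced subscheme $(G_{\aff})_{\red}$ is a smooth subgroup scheme, and the quotient $G_{\aff}/(G_{\aff})_{\red}$ is infinitesimal, hence finite; so replacing $G_{\aff}$ by $(G_{\aff})_{\red}$ costs only a finite kernel.

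Next I would combine these: from Proposition \ref{prop:ros}(i) the multiplication map $G_{\ant} \times G_{\aff} \to G$ is surjective with finite kernel $G_{\aff} \cap G_{\ant}$, and composing with $\id \times (T \times U \to G_{\aff})$ (an isogeny onto $(G_{\aff})_{\red}$, up to a finite infinitesimal quotient) gives the desired homomorphism $f : G_{\ant} \times T \times U \to G$. Its kernel is an extension of the finite group scheme $G_{\aff} \cap G_{\ant}$ by the finite kernel of $T \times U \to G_{\aff}$ (or $(G_{\aff})_{\red}$), hence finite; its image is $G_{\ant}\,(G_{\aff})_{\red}$, which has finite index in $G = G_{\ant}\, G_{\aff}$, and since $G$ is connected, $f$ is surjective. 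Therefore $f$ is an isogeny.

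For uniqueness, I would argue as follows. Given any two such data $(T,U)$ and $(T',U')$, the subtorus $T$ is contained in $G_{\aff}$ (since $G_{\ant}$ contains no nontrivial torus: $(G_{\ant})_{\aff}$ is anti-affine by Proposition \ref{prop:ros}(ii) combined with the fact that its own affinization vanishes, so it contains no torus), and likewise $T$ is a subtorus of the maximal subtorus of $G_{\aff}$; by maximality and the fact that all maximal tori of a connected commutative affine group coincide with the unique maximal subtorus, $T = T'$ is the maximal subtorus of $G_{\aff}$, hence unique on the nose (in particular up to isogeny). For $U$: the image of $U$ in $G/G_{\ant}T$ is surjective with finite kernel, and $G/(G_{\ant}T)$ is a connected unipotent group scheme (quotient of $G_{\aff}/T \cong U$-type), so $U$ is determined up to isogeny; to get uniqueness up to isomorphism one notes that in characteristic $0$ a unipotent group is a vector group determined by its dimension, while in characteristic $p$ one uses that a connected unipotent subgroup scheme $U \subset G$ mapping isogenously onto the unipotent part must equal the unipotent radical of $G_{\aff}$ when $G$ is reduced, and in general equals that of $(G_{\aff})_{\red}$; since any isogeny of unipotent groups in characteristic $0$ is an isomorphism and in characteristic $p$ we may pin $U$ down as $(G_{\aff})_{\red}$ intersected with the unipotent part, $U$ is unique up to isomorphism. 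Finally, if $G$ is an algebraic group (smooth), then $G_{\aff}$ is smooth, hence $U = $ unipotent radical of $G_{\aff}$ is a smooth group scheme, i.e.~an algebraic group.

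The main obstacle I anticipate is the care needed in the non-smooth case: ensuring that passing from $G_{\aff}$ to a product $T \times U$ only introduces finite kernels (controlling the infinitesimal part via perfectness of $k$), and proving the uniqueness-up-to-isomorphism of $U$ in positive characteristic, where unlike tori, isogenous unipotent groups need not be isomorphic — so one genuinely has to identify $U$ intrinsically (as the unipotent radical of $(G_{\aff})_{\red}$, equivalently the maximal connected unipotent subgroup of $G$ meeting $G_{\ant}T$ trivially up to finite overlap) rather than merely up to isogeny.
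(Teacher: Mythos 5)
There is a genuine error at the foundation of your argument: you assert that ``the multiplication map $G_{\ant} \times G_{\aff} \to G$ is surjective with \emph{finite} kernel $G_{\aff} \cap G_{\ant}$,'' and, in the uniqueness discussion, that ``$G_{\ant}$ contains no nontrivial torus'' because ``$(G_{\ant})_{\aff}$ is anti-affine.'' Both claims are false. Proposition \ref{prop:ros}(i) only gives surjectivity with kernel $G_{\aff}\cap G_{\ant}$, and part (ii) says this intersection contains $(G_{\ant})_{\aff}$ with \emph{finite quotient} --- but $(G_{\ant})_{\aff}$ is the affine part of $G_{\ant}$ (affine by definition, certainly not anti-affine), and it is positive-dimensional whenever $G_{\ant}$ is a non-complete anti-affine group, e.g.\ a nontrivial anti-affine semi-abelian variety, where $(G_{\ant})_{\aff}$ is a nontrivial torus. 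Concretely, if $G=G_{\ant}$ is such a semi-abelian variety with $G_{\aff}=T_0\neq 0$, your recipe ($T=$ maximal torus of $G_{\aff}$, $U=0$) produces $f:G\times T_0\to G$, whose kernel has dimension $\dim T_0>0$; the correct choice is $T=0$. This also invalidates your uniqueness claim that $T$ equals the maximal subtorus of $G_{\aff}$ ``on the nose'': the theorem only asserts uniqueness up to isogeny precisely because $T$ must be chosen as a quasi-complement inside the maximal torus of $G_{\aff}$ to the part already accounted for by $G_{\ant}$.

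The paper's proof handles exactly this point. It applies the decomposition into torus and unipotent parts to \emph{both} $G_{\aff}=T'\times U'$ and the affinization $G/G_{\ant}=T''\times U''$, obtaining epimorphisms $\psi_s:T'\to T''$ and $\psi_u:U'\to U''$ from the exact sequence $1\to G_{\aff}\cap G_{\ant}\to G_{\aff}\to G/G_{\ant}\to 1$, and then chooses $T\subset T'$ so that $\psi_s|_T$ is an isogeny onto $T''$ (and, in characteristic $0$, $U\subset U'$ so that $\psi_u|_U$ is an isomorphism onto $U''$; in positive characteristic $\psi_u$ is already an isogeny since $(G_{\ant})_{\aff}$ is a torus, so $U=U'$ is forced). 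Only then does $G_{\ant}\times T\times U\to G$ have finite kernel. Your treatment of non-smoothness via $(G_{\aff})_{\red}$ over a perfect field is reasonable in itself, but it does not repair the dimension count above; you need to replace your choice of $T$ (and, in characteristic $0$, of $U$) by sections-up-to-isogeny of the map to $G/G_{\ant}$ rather than maximal subgroups of $G_{\aff}$.
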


\begin{proof}
The Rosenlicht decomposition yields an exact sequence of group schemes
$$
\CD
1 @>>> G_{\aff} \cap G_{\ant} @>>> G_{\aff} @>{\psi}>> G/G_{\ant} 
@>>> 1.
\endCD
$$
Moreover, we have unique decompositions $G_{\aff} = T' \times U'$
and $G/G_{\ant} = T'' \times U''$, where $T'$, $T''$ are tori and $U'$, 
$U''$ are connected unipotent group schemes. This yields epimorphisms
$\psi_s : T' \to T''$, $\psi_u : U' \to U''$. Thus, we may find
a subtorus $T \subset T'$ such that $\psi_s$ restricts to an isogeny 
$T \to T''$. 

If $k$ has characteristic $0$, we may also find a (connected) unipotent
subgroup $U \subset U'$ such that $\psi_u$ restricts to an 
isomorphism $U \to U''$, as $U'$ and $U''$ are vector groups. 
Then the homomorphism $f$ induces an isogeny 
$T \times U \to G/G_{\ant}$. Thus, $f$ is an isogeny, and $T$, $U$ are 
unique up to isogeny; hence the vector group $U$ is uniquely determined. 

In positive characteristics, $G_{\aff} \cap G_{\ant}$ contains the 
torus $(G_{\ant})_{\aff}$ and the quotient is finite; hence $\psi_u$ 
is an isogeny. Thus, our statement holds with $U = U'$, but for no other 
choice of $U$. 
\end{proof}

The assumption that $k$ is perfect cannot be omitted in the preceding 
result, as shown by Example \ref{ex:npe}.

\subsection{Further decompositions in positive characteristics}
\label{subec:dec}

In this subsection, we combine the Rosenlicht decomposition 
with the particularly simple structure of anti-affine algebraic 
groups in positive characteristics, to obtain information on 
general algebraic groups.

We begin with the case where the field $k$ is finite. Then 
Propositions \ref{prop:pos} and \ref{prop:ros} immediately imply
the following result, due to Arima in the setting of algebraic 
groups (see \cite[Thm.~1]{Ar60} and also \cite[Thm.~4]{Ro61}):

\begin{proposition}\label{prop:ari}
Let $G$ be a connected group scheme over a finite field $k$. Then 
$G = G_{\aff} \, G_{\ab}$ where $G_{\ab}$ denotes the largest 
abelian subvariety of $G$. Moreover, $G_{\aff} \cap G_{\ab}$ is finite.
\end{proposition}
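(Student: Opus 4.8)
The plan is to deduce Proposition \ref{prop:ari} directly from the two structural inputs it cites: the Rosenlicht decomposition (Proposition \ref{prop:ros}) and the fact (Proposition \ref{prop:pos}) that over a finite field every anti-affine algebraic group is an abelian variety. First I would invoke Proposition \ref{prop:ros}(i) to write $G = G_{\aff}\, G_{\ant}$, with $G_{\aff} \cap G_{\ant}$ the kernel of the multiplication homomorphism $G_{\aff} \times G_{\ant} \to G$. The key observation is then that $G_{\ant}$, being a connected anti-affine algebraic group over the finite field $k$, is an abelian variety by Proposition \ref{prop:pos}; in particular $(G_{\ant})_{\aff}$ is trivial, so the subtleties of Proposition \ref{prop:ros}(ii)--(iii) (the quotient by $(G_{\ant})_{\aff}$) disappear entirely in this setting.

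Next I would identify $G_{\ant}$ with $G_{\ab}$, the largest abelian subvariety of $G$. On one hand $G_{\ant}$ is an abelian subvariety of $G$, so $G_{\ant} \subseteq G_{\ab}$. On the other hand, any abelian subvariety $B \subseteq G$ is anti-affine (an abelian variety has no nonconstant regular functions), hence contained in the largest anti-affine subgroup $G_{\ant}$; so $G_{\ab} \subseteq G_{\ant}$. This gives $G_{\ant} = G_{\ab}$, and substituting into the Rosenlicht decomposition yields $G = G_{\aff}\, G_{\ab}$.

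Finally, for the finiteness of $G_{\aff} \cap G_{\ab}$: this intersection is a subgroup scheme of $G_{\aff}$, hence affine, and simultaneously a subgroup scheme of the abelian variety $G_{\ab}$, hence proper; an affine proper group scheme is finite. (Alternatively this is exactly Proposition \ref{prop:ros}(ii) specialized to the case where $(G_{\ant})_{\aff}$ is trivial.) I do not anticipate a real obstacle here — the proof is essentially a matter of unwinding definitions — the only point requiring care is the bookkeeping identification $G_{\ant} = G_{\ab}$, which hinges entirely on Proposition \ref{prop:pos} forcing $G_{\ant}$ to be complete over a finite field; over an infinite field $G_{\ant}$ can be a noncomplete semi-abelian variety and the statement genuinely fails, as the introduction already notes.
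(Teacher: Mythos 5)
Your proof is correct and follows exactly the route the paper intends: the paper states that Propositions \ref{prop:pos} and \ref{prop:ros} ``immediately imply'' the result, and your argument is precisely the unwinding of that implication (Rosenlicht decomposition, $G_{\ant}$ abelian over a finite field, hence $G_{\ant}=G_{\ab}$, and affine plus proper implies finite). Nothing is missing.
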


In particular, the Albanese morphism of $(G,e_G)$ is trivialized by
the finite cover $G_{\aff} \times G_{\ab} \to G$ (possibly
non-\'etale).

Returning to a possibly infinite field $k$, we record the following 
preliminary result:

\begin{lemma}\label{lem:gal}
Let $G$ be a connected algebraic group over a perfect field $k$. Then:

\smallskip

\noindent
{\rm (i)} There exists a smallest normal connected algebraic group 
$H \subset G$ such that $G/H$ is a semi-abelian variety.
The quotient homomorphism $G \to G/H$ is the generalized
Albanese morphism of the pointed variety $(G,e_G)$.

\smallskip

\noindent
{\rm (ii)} We have
\begin{equation}\label{eqn:der}
H = R_u(G_{\aff}) \, [G,G] = R_u(G_{\aff}) \, [G_{\aff},G_{\aff}]
\end{equation}
where $R_u(G_{\aff})$ denotes the unipotent radical of $G_{\aff}$, and 
$[G,G]$ the derived group. 

\smallskip

\noindent
{\rm (iii)} The formation of $H$ commutes with perfect field
extensions.

\smallskip

\noindent
{\rm (iv)} The group $H_{\kb}$ is generated by all connected unipotent 
subgroups of~$G_{\kb}$.
\end{lemma}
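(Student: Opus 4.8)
The plan is to prove the four assertions in order, the key device being the following claim: \emph{every morphism of varieties $f:G\to S$ to a semi-abelian variety $S$ with $f(e_G)=0$ is a homomorphism}. To see this I may assume $k=\kb$ and write $S$ as an extension $1\to T\to S\to B\to 0$ with $T$ a torus and $B$ an abelian variety. The composite $\bar f:G\to B$ is constant on $G_{\aff}$, since a morphism from a connected affine algebraic group to an abelian variety is constant (see \cite[Lem.~2.3]{Co02}); hence $\bar f$ factors through $\alpha_G:G\to A(G)$, and the induced map $A(G)\to B$ (sending $0$ to $0$) is a homomorphism by rigidity of abelian varieties (\cite[Lem.~2.2]{Co02}). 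Consequently the ``defect'' morphism
\[
\Phi:G\times G\longrightarrow S,\qquad (x,y)\longmapsto f(xy)-f(x)-f(y)
\]
takes its values in $T$. Composing with an arbitrary character $\chi$ of the split torus $T$ yields a unit $\chi\circ\Phi\in\cO(G\times G)^*$ which is $1$ on $G\times\{e_G\}$ and on $\{e_G\}\times G$; since every unit on a product of (geometrically) integral varieties is, up to a constant, a product of units pulled back from the two factors, $\chi\circ\Phi$ is constant, hence $\equiv 1$. As $\chi$ was arbitrary, $\Phi=0$ and $f$ is a homomorphism.

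For (i) and (ii) I would set $H:=R_u(G_{\aff})\,[G,G]$, a normal connected algebraic subgroup of $G$ (over the perfect field $k$ both $R_u(G_{\aff})$ and $[G,G]$ are smooth, connected and normal in $G$). Then $G/H$ is commutative since $[G,G]\subset H$, and $(G/H)_{\aff}$, being the image of $G_{\aff}$, is a quotient of $G_{\aff}/\bigl(R_u(G_{\aff})[G_{\aff},G_{\aff}]\bigr)=\bigl(G_{\aff}/R_u(G_{\aff})\bigr)^{\ab}$, a torus; so $G/H$ is an extension of an abelian variety by a torus, i.e.\ semi-abelian. Conversely, any normal connected algebraic $H'\subset G$ with $G/H'$ semi-abelian contains $[G,G]$ (as $G/H'$ is commutative) and contains $R_u(G_{\aff})$ (its image in $G/H'$ is a connected unipotent subgroup of a semi-abelian variety, hence lies in the toric part and is trivial); thus $H\subset H'$. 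This proves both the existence of the smallest such subgroup, which is $H$, and the first equality in (\ref{eqn:der}). For the second, put $N':=R_u(G_{\aff})[G_{\aff},G_{\aff}]\subset H$; then $(G/N')_{\aff}=G_{\aff}/N'$ is a torus, which is central in the connected group $G/N'$ because $\Aut$ of a torus is \'etale, and then $G/N'$ is commutative (the commutator map factors through a morphism $A(G/N')\times A(G/N')\to(G/N')_{\aff}$ to an affine group, hence is trivial); therefore $[G,G]\subset N'$, so $H=R_u(G_{\aff})[G,G]\subset N'$ and $H=N'$. Finally, the quotient map $q:G\to G/H$ is a morphism to a semi-abelian variety with $q(e_G)=0$, so it factors through the generalized Albanese morphism $\sigma=\sigma_{G,e_G}:G\to S_G$; by the criterion above $\sigma$ is a homomorphism, hence surjective, with $G/\ker\sigma\simeq S_G$ semi-abelian. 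Passing to $(\ker\sigma)^0_{\red}$ and using that any smooth connected commutative algebraic group isogenous to a semi-abelian variety is itself semi-abelian, one finds $H\subset(\ker\sigma)^0_{\red}\subset\ker\sigma$ by minimality; since conversely $H\subset\ker\sigma$ from the factorization of $q$, the induced surjections $S_G\to G/H\to S_G$ are mutually inverse, so $q$ \emph{is} the generalized Albanese morphism.

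Part (iii) is then immediate from (i): the formation of the generalized Albanese morphism commutes with base change to perfect field extensions (as recalled before the lemma), so for perfect $K/k$ the map $G_K\to(G/H)_K$ is the generalized Albanese morphism of $(G_K,e_{G_K})$, whence $H_K$ is the analogous subgroup of $G_K$. (Alternatively, one invokes (\ref{eqn:der}) together with the compatibility of the formations of $G_{\aff}$, of $R_u$, and of the derived group with perfect field extensions.)

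For (iv) I may take $k=\kb$. Let $N\subset G$ be the subgroup generated by all connected unipotent subgroups of $G$; it is a closed connected normal subgroup (see \cite[Exp.~VIB, Prop.~7.1]{SGA3}). Each connected unipotent subgroup maps trivially into the semi-abelian variety $G/H$ (its image there would be a connected unipotent subgroup of a semi-abelian variety, hence trivial), so $N\subset H$. For $H\subset N$ it suffices, by (i), to show $G/N$ is semi-abelian, and for that it suffices to show $G/N$ has no non-trivial connected unipotent subgroup. If $\bar W\cong\bG_a$ were such a subgroup, its preimage $W\subset G$ is a connected algebraic group, and $W\to\bar W$ does not vanish on $W_{\aff}$ (else it would factor through the abelian variety $A(W)$ and be trivial); so $W_{\aff}\to\bar W$ is surjective, and since a reductive group has no non-trivial unipotent quotient, already $R_u(W_{\aff})\to\bar W$ is surjective. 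But $R_u(W_{\aff})$ is a connected unipotent subgroup of $G$, hence contained in $N$, so its image $\bar W$ in $G/N$ is trivial --- a contradiction. Thus $(G/N)_{\aff}$ is a connected affine group with no non-trivial connected unipotent subgroup, i.e.\ a torus, which (as in the proof of (ii)) is central, so $G/N$ is semi-abelian; hence $H\subset N$ and $H=N$. The main obstacle in the whole argument is the bookkeeping with possibly non-reduced or non-connected kernels in positive characteristic in the last step of (i); once the homomorphism criterion of the first paragraph is in place, everything else is essentially formal.
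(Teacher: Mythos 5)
Your proof is correct and follows the same basic strategy as the paper's: define $H$ by the formula (\ref{eqn:der}), check that $G/H$ is semi-abelian, and exploit the fact that every pointed morphism from $G$ to a semi-abelian variety is a homomorphism. The differences are in execution. Where the paper simply cites \cite[Thm.~3]{Ro61} for that homomorphism property, you reprove it from rigidity of abelian varieties together with the Rosenlicht unit theorem applied to $\chi\circ\Phi\in\cO(G\times G)^*$ --- a correct, self-contained substitute. The paper then obtains the universal property of $G\to G/H$ directly (a homomorphism to a semi-abelian variety kills $R_u(G_{\aff})$ because its image there is connected unipotent, and kills $[G,G]$ by commutativity, hence factors through $G/H$), whereas you first prove minimality of $H$ among normal connected subgroups with semi-abelian quotient and then identify $G/H$ with $S_G$ by comparing kernels; both routes work, the paper's being shorter. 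You also supply the arguments for (iii) and (iv) that the paper dismisses as ``similar,'' and your treatment of (iv) (reducing to a copy of $\bG_a$ in $G/N$ and forcing it to come from $R_u(W_{\aff})\subset N$) is sound over $\kb$. One slip to fix: in the kernel comparison you write ``conversely $H\subset\ker\sigma$ from the factorization of $q$''; the factorization $q=\pi\circ\sigma$ gives the opposite inclusion $\ker\sigma\subset\ker q=H$, which is the inclusion you actually need to conclude that the two induced surjections are mutually inverse. This is a typo rather than a gap.
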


\begin{proof}
By the Rosenlicht decomposition, we have $[G,G] = [G_{\aff},G_{\aff}]$.
Define $H$ by the equality (\ref{eqn:der}); then $H$ 
is a connected normal subgroup of $G$. Moreover, the quotient 
$G_{\aff}/H$ is a connected commutative reductive group, 
i.e., a torus. Thus, $G/H$ is a semi-abelian variety. 

Consider a morphism $f : G \to S$, where $S$ is a semi-abelian
variety, and $f(e_S) = e_G$. Then $f$ is a homomorphism by
\cite[Thm.~3]{Ro61}. Hence $f$ factors through $G/R_u(G_{\aff})$ 
(as every unipotent subgroup of $S$ is trivial) and also through 
$G/[G,G]$ (as $S$ is commutative). Thus, $f$ factors through
$G/H$. This proves (i) and (ii), while (iii) and (iv) are obtained by 
similar arguments.
\end{proof}

Under the assumptions of the preceding lemma, we say that $H$
is \emph{geometrically unipotently generated}, and write 
$H: = G_{\gug}$.

Also, given a group scheme $G$, a normal subgroup scheme  
$H \subset G$ and a subgroup scheme $S \subset G$, we say that 
$S$ is a \emph{quasi-complement to $H$ in $G$} if $G = H \, S$ and 
$H \cap S$ is finite; equivalently, the natural map
$S \to G/H$ is an isogeny. We may now state our structure result:

\begin{theorem}\label{thm:dec}
Let $G$ be a connected algebraic group over a perfect field $k$ 
of positive characteristic and let $T$ be a maximal torus of the
radical $R(G_{\aff})$. Then:

\smallskip

\noindent
{\rm (i)} $T$ is a quasi-complement to $G_{\gug}$ in $G_{\aff}$. 

\smallskip

\noindent
{\rm (ii)} $S := T \, G_{\ant}$ is a quasi-complement to $G_{\gug}$ in
$G$, and is a semi-abelian subvariety of $G$ with maximal torus $T$. 

\smallskip

\noindent
{\rm (iii)} The generalized Albanese morphism of $(G,e_G)$ is
trivialized by the finite cover $G_{\gug} \times S \to G$.
\end{theorem}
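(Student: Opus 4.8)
The plan is to prove the three parts in order, using the Rosenlicht decomposition together with the structure of semi-abelian varieties and the fact (Proposition~\ref{prop:pos}) that in positive characteristic $G_{\ant}$ is a semi-abelian variety.

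First I would establish (i). Recall that $G_{\gug} = R_u(G_{\aff})\,[G_{\aff},G_{\aff}]$, and $G_{\aff}/G_{\gug}$ is a torus. Let $T$ be a maximal torus of the radical $R(G_{\aff})$. The image of $T$ in the torus quotient $G_{\aff}/G_{\gug}$ is a subtorus; I claim it is everything, so that $T \to G_{\aff}/G_{\gug}$ is an isogeny, i.e.\ $G_{\aff} = G_{\gug}\, T$ with $G_{\gug} \cap T$ finite. To see surjectivity, observe that over $\kb$ one has the Levi-type decomposition $G_{\aff} = R_u(G_{\aff}) \rtimes L$ with $L$ reductive, and $R(G_{\aff})$ has maximal torus mapping onto the central torus of $L$; combined with $[L,L] \subset [G_{\aff},G_{\aff}] \subset G_{\gug}$, one gets that $T$ surjects onto $G_{\aff}/G_{\gug}$. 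The finiteness of $G_{\gug}\cap T$ follows from comparing dimensions, or more cleanly: $G_{\gug} \cap T$ is a subgroup scheme of a torus, hence of multiplicative type, and it maps to zero in the torus $G_{\aff}/G_{\gug}$ only if it is finite since $\dim T = \dim(G_{\aff}/G_{\gug})$.

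Next, part (ii). Set $S := T\,G_{\ant}$. Since $G_{\ant}$ is central in $G$, $S$ is a commutative connected subgroup scheme; it is smooth because both $T$ and $G_{\ant}$ are. To see $S$ is semi-abelian, note that by Proposition~\ref{prop:pos} the group $G_{\ant}$ is a semi-abelian variety, so $(G_{\ant})_{\aff}$ is a torus; by the Rosenlicht decomposition (Proposition~\ref{prop:ros}(ii)) $(G_{\ant})_{\aff} \subset G_{\aff}$, and in fact $(G_{\ant})_{\aff}$ lies in $R(G_{\aff})$ since it is a torus contained in a normal subgroup, hence (after conjugation, but $T$ is maximal in the radical and the radical is connected solvable) we may arrange $(G_{\ant})_{\aff} \subset T$. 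Then $S_{\aff} = T$: indeed $T\,(G_{\ant})_{\aff} = T$ is an affine normal subgroup with $S/T = G_{\ant}/(G_{\ant})_{\aff}$ an abelian variety. So $S$ is an extension of an abelian variety by the torus $T$, i.e.\ a semi-abelian variety with maximal torus $T$. For the quasi-complement property: $G = G_{\aff}\,G_{\ant}$ by Rosenlicht, and $G_{\aff} = G_{\gug}\,T$ by (i), so $G = G_{\gug}\,T\,G_{\ant} = G_{\gug}\,S$. Finally $G_{\gug}\cap S$: the natural map $S \to G/G_{\gug}$ has image a subgroup containing the image of $T$ (which is all of $G_{\aff}/G_{\gug}$) and the image of $G_{\ant}$; since $G/G_{\gug}$ is the generalized Albanese, a semi-abelian variety, and $S$ is semi-abelian of the same dimension (one checks $\dim S = \dim T + \dim G_{\ant} - \dim(T\cap G_{\ant})$ matches $\dim(G/G_{\gug})$ using (i) and that $G_{\ant}/(G_{\ant})_{\aff}$ maps isomorphically to the abelian part), the map $S \to G/G_{\gug}$ is an isogeny, so $G_{\gug}\cap S$ is finite.

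Part (iii) is then essentially a restatement: by Lemma~\ref{lem:gal}(i) the quotient map $G \to G/G_{\gug}$ is the generalized Albanese morphism, and by (ii) the composite $S \hookrightarrow G \to G/G_{\gug}$ is an isogeny and $G_{\gug} \hookrightarrow G \to G/G_{\gug}$ is trivial; hence the multiplication map $G_{\gug}\times S \to G$ followed by the Albanese morphism factors as projection to $S$ followed by an isogeny, which is exactly the assertion that the Albanese morphism is trivialized by the finite cover $G_{\gug}\times S \to G$. The main obstacle I anticipate is the bookkeeping in part (ii) around placing $(G_{\ant})_{\aff}$ inside the chosen maximal torus $T$ and verifying the dimension count cleanly — one must be careful that $T$ is maximal in $R(G_{\aff})$ rather than in $G_{\aff}$, and that the torus $(G_{\ant})_{\aff}$, being central in $G_{\ant}$ and normalized by $G$, indeed lands in the radical and can be conjugated into $T$; once that is set up, everything else reduces to the Rosenlicht decomposition plus Proposition~\ref{prop:pos}.
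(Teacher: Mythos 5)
Your overall strategy --- the Rosenlicht decomposition combined with the structure theory of $G_{\aff}$ and Proposition \ref{prop:pos} --- is the paper's strategy, and parts (ii) and (iii) are essentially sound: your dimension count for the finiteness of $G_{\gug}\cap S$ is a workable (if more laborious) substitute for the paper's observation that $G_{\gug}\cap S$ is affine while $S/T$ is an abelian variety, and your placement of $(G_{\ant})_{\aff}$ inside $T$ is correct, since a central subtorus of $G_{\aff}$ lies in every maximal torus of the connected solvable group $R(G_{\aff})$. The genuine problem is in part (i). Your proof that $T$ surjects onto $G_{\aff}/G_{\gug}$ invokes a Levi decomposition $G_{\aff}=R_u(G_{\aff})\rtimes L$ over $\kb$. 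Levi subgroups need not exist in positive characteristic --- which is precisely the setting of the theorem --- so this step rests on a false statement. The conclusion is nevertheless true, and the correct route is the one the paper takes: $G_{\aff}=R(G_{\aff})\,[G_{\aff},G_{\aff}]$ (any commutative quotient of the semisimple group $G_{\aff}/R(G_{\aff})$ is trivial), together with the decomposition $R(G_{\aff})=R_u(G_{\aff})\rtimes T$ of the connected \emph{solvable} radical, which is valid in all characteristics; this yields $G_{\aff}=R_u(G_{\aff})\,T\,[G_{\aff},G_{\aff}]=G_{\gug}\,T$ directly.

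The second soft spot in (i) is the finiteness of $G_{\gug}\cap T$. Your argument hinges on the equality $\dim T=\dim(G_{\aff}/G_{\gug})$, which you assert but do not establish; unwinding it, that equality is equivalent to the finiteness you are trying to prove (one direction of the inequality already follows from $G_{\aff}=G_{\gug}\,T$, and the other is exactly the claim that the kernel is finite), so the ``comparison of dimensions'' is circular as stated. The paper instead passes to the quotient by $R_u(G_{\aff})$, where $T$ maps isomorphically onto a maximal torus of the radical of the reductive group $G_{\aff}/R_u(G_{\aff})$ and $G_{\gug}$ maps onto its derived group, and then quotes the standard fact that the radical of a reductive group meets its derived (semisimple) subgroup in a finite group scheme. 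Some such input is needed; once you supply it, the rest of your argument for (ii) and (iii) goes through.
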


\begin{proof}
(i) By the structure of affine algebraic groups (see \cite{Bo91})
and the equality (\ref{eqn:der}), we have
$$\displaylines{
G_{\aff}  = R(G_{\aff}) \, [G_{\aff}, G_{\aff}] 
= R_u(G_{\aff}) \, T \, [G_{\aff}, G_{\aff}] 
\hfill \cr \hfill
= R_u(G_{\aff}) \, [G_{\aff}, G_{\aff}] \, T
= G_{\gug} \, T.
\cr}$$
We now show the finiteness of $G_{\gug} \cap T$. For this, we may
assume that $G$ is affine. Since the homomorphism 
$$
G_{\gug} \cap T \to (G_{\gug} \cap R(G))/R_u(G) 
\subset G/R_u(G)
$$ 
is finite, we may also assume $G$ to be reductive. Then 
$G_{\gug} = [G,G]$ is semi-simple and $T$ is the largest central
torus, so that their intersection is indeed finite.

(ii) By the Rosenlicht decomposition and (i), 
$G = G_{\gug} \, S$. Also, the quotient 
$(G_{\gug} \cap S)/(G_{\gug} \cap T)$ is finite, since 
$G_{\gug} \cap S$ is affine and $T = S_{\aff}$. Thus, 
$G_{\gug} \cap S$ is finite, i.e., $S$ is a quasi-complement to
$G_{\gug}$ in $G$. 

We know that $G_{\ant}$ is a semi-abelian variety contained in 
the centre of $G$. Thus, $S$ is a semi-abelian variety as well. 
Moreover, the maximal torus $(G_{\ant})_{\aff}$ of $G_{\ant}$
is a central subtorus of $G_{\aff}$, and hence is contained in $T$.
Thus, $T$ is the maximal torus of $S$.

(iii) follows readily from (ii).
\end{proof}

\begin{remarks}
(i) The quasi-complements constructed in the preceding theorem are all
conjugate under $R_u(G_{\aff})$. But $G_{\gug}$ may admit other
quasi-complements in $G$; namely, all subgroups  $T' \, G_{\ant}$ where
$T'$ is a quasi-complement of $G_{\gug}$ in $G_{\aff}$. Such a
subtorus $T'$ need not be contained in $R(G_{\aff})$, e.g., when
$G_{\aff}$ is reductive and non-commutative.

\smallskip

\noindent
(ii) With the notation and assumptions of the preceding theorem, 
$G_{\ant}$ also admits quasi-complements in $G$, namely, the subgroups 
$T' \,G_{\gug}$ where $T'$ is a quasi-complement to
$(G_{\ant})_{\aff}$ in $T$.

In contrast, $G_{\aff}$ may admit no quasi-complement in $G$. Indeed,
such a quasi-complement $S$ comes with a finite surjective morphism to
$G/G_{\aff}$, and hence is an abelian variety. Thus, $S$ exists if and
only if $G_{\ant}$ is an abelian variety, and then $S = G_{\ant}$. 
Equivalently, $G = G_{\aff} \, G_{\ab}$ as in Proposition \ref{prop:ari}.

\smallskip

\noindent
(iii) In characteristic $0$, the group $G_{\gug}$ still admits
quasi-complements in $G_{\aff}$, but may admit no quasi-complement
in $G$.

For example, let $C$ be an elliptic curve, $E(C)$ its universal
extension, $H$ the Heisenberg group of upper triangular $3 \times 3$
matrices with diagonal entries $1$, and $G = (H \times E(C))/\bG_a$ where
the additive group $\bG_a$ is embedded in $H$ as the center, and in
$E(C)$ as $E(C)_{\aff}$. Then $G$ is a connected algebraic group;
moreover, $G_{\gug} = G_{\aff} \cong  H$ and 
$G_{\ant} \cong E(C)$. Since $G_{\ant}$ is non-complete, there exist
no quasi-complement to $G_{\aff}$ in $G$.

The same example shows that $G_{\ant}$ may admit no quasi-complement
in $G$. Yet such a quasi-complement does
exist when $G$ is commutative, by Theorem \ref{thm:com}.
\end{remarks}

\subsection{Counterexamples to Hilbert's fourteenth problem}
\label{subsec:hil}

In this subsection, we construct a class of smooth 
quasi-affine varieties having a non-noetherian coordinate ring. 

Recall that every connected algebraic group $G$ is quasi-projective, 
i.e., $G$ admits an ample invertible sheaf $\cL$ (see e.g. 
\cite[Cor.~V 3.14]{Ra70}). Clearly, the associated $\bG_m$-torsor 
over $G$ (that is, the complement of the zero section in the total 
space of the associated line bundle $\bV(\cL)$) is a smooth
quasi-affine variety. This simple construction yields our examples:

\begin{theorem}\label{thm:zar}
Let $\pi : X \to G$ denote the $\bG_m$-torsor associated to an ample 
invertible sheaf $\cL$ on a non-complete anti-affine algebraic group. 
Then the ring $\cO(X)$ is not noetherian.
\end{theorem}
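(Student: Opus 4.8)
The plan is to compute $\cO(X)$ via the decomposition of $\pi_*(\cO_X)$ into $\bG_m$-eigenspaces. Since $X \to G$ is the $\bG_m$-torsor associated to $\cL$, we have $\pi_*(\cO_X) \simeq \bigoplus_{n \in \bZ} \cL^{\otimes n}$ as $\cO_G$-modules, the $\bG_m$-action being by the grading. Taking global sections, $\cO(X) \simeq \bigoplus_{n \in \bZ} H^0(G,\cL^{\otimes n})$. The key point is that $G$ is non-complete and anti-affine; by Theorem~\ref{thm:cla} (and Proposition~\ref{prop:pos} in positive characteristic) such a $G$ always contains a non-complete anti-affine subgroup, and in fact $G$ itself is non-complete with $A(G) = G/G_{\aff}$ a nonzero abelian variety. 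I would first reduce to $k = \kb$ by base change (replacing $X$ by $X_{\kb}$; $\cO(X)$ is noetherian iff $\cO(X_{\kb})$ is, using faithful flatness, and ampleness and non-completeness are preserved).

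Next I would pin down which graded pieces vanish. Restricting $\cL^{\otimes n}$ along the Albanese map $\alpha_G : G \to A := A(G)$: since $G_{\aff}$ is affine and $G$ is anti-affine, the pieces $H^0(G,\cL^{\otimes n})$ for $n < 0$ must vanish — indeed if $\cL^{\otimes n}$ had a nonzero global section for some $n<0$, then so would $\cL^{\otimes(-n)}$'s inverse, and combining with ampleness of $\cL$ (so $\cL^{\otimes m}$ is globally generated for $m \gg 0$) one gets a nonconstant invertible function or a nonconstant regular function on $G$, contradicting $\cO(G) = k$. More carefully: $\cO(X)_0 = \cO(G) = k$, and for $n>0$ the space $H^0(G,\cL^{\otimes n})$ is finite-dimensional (it injects into $H^0(\overline{G},\overline{\cL}^{\otimes n})$ for a suitable completion, or one argues directly), while for $n<0$ it is zero. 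So $\cO(X) = \bigoplus_{n \geq 0} H^0(G,\cL^{\otimes n})$ is a graded ring with $\cO(X)_0 = k$, and the associated Proj is (an open subscheme related to) a projective completion of $G$.

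The heart of the argument is then: if $\cO(X) = \bigoplus_{n\geq 0} H^0(G,\cL^{\otimes n})$ were finitely generated, then $R := \cO(X)$ would be a finitely generated graded $k$-algebra, hence $Y := \operatorname{Proj}(R)$ would be a projective variety, and $X = \Spec(R) \setminus \{\mathfrak{m}_0\}$ minus the vertex would map to $Y$; pulling this apart, $G$ would embed as a dense open subset of the projective variety $Y$ with complement of codimension $\geq 1$, and moreover the complement $Z := Y \setminus G$ would support an ample divisor (a power of $\cO_Y(1)$). But an anti-affine non-complete group cannot sit inside a projective variety as the complement of an ample (equivalently: connected in codimension one, supporting a big and semiample) boundary: the point is that ampleness of the boundary forces, via Goodman's theorem or a direct curve argument, that $G$ is \emph{affine} (the complement of an ample divisor in a projective variety is affine), contradicting that $G$ is anti-affine and non-trivial. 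Concretely: $X \to G$ is affine, $G \hookrightarrow Y$ would be an open immersion with $Y\setminus G$ the support of an ample divisor, so $G$ is affine; but a non-trivial anti-affine group has $\cO(G) = k$, so it would be a point — contradiction since $\dim G > 0$.

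The main obstacle is making the last step rigorous without circularity — specifically, extracting from finite generation of $R$ the geometric conclusion that $G$ is the complement of an ample divisor in a projective variety. The subtlety is that $\operatorname{Proj}(R)$ need not literally contain $G$ as an open subset (the rational map $G \dashrightarrow \operatorname{Proj}(R)$ defined by $\cL$ and its powers may contract things), so I would instead argue as follows: $\cL$ is ample on $G$, so for large $n$ the sections of $\cL^{\otimes n}$ give a locally closed immersion $G \hookrightarrow \bP^N$; let $\overline{G}$ be the closure. Then $\cO(X)$ is the section ring of $\cL$, and one shows $\cO(X)$ finitely generated would imply $\overline{G} \setminus G$ is the support of an ample divisor on (a normalization of) $\overline{G}$, whence $G$ affine by Goodman's theorem [cf.\ Hartshorne, \emph{Ample Subvarieties of Algebraic Varieties}, Ch.~II]. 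I expect this reduction to require a short lemma relating finite generation of the section ring to quasi-affineness of $X$ being "accounted for" by an ample boundary, and I would lean on the standard fact that $X$ quasi-affine with $\cO(X)$ finitely generated makes $X \to \Spec\cO(X)$ an open immersion with affine complement; this is exactly where the Rees-type pathology is avoided only when it genuinely is avoided, i.e.\ never here.
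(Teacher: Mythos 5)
There is a genuine gap, and it sits at the center of your argument: you assert that for $n>0$ the space $H^0(G,\cL^{\otimes n})$ is finite-dimensional, ``injecting into $H^0(\overline{G},\overline{\cL}^{\otimes n})$ for a suitable completion.'' This is false, and in fact the opposite statement is the whole content of the theorem. The paper's key Lemma~\ref{lem:inf} shows that $H^0(G,\cL^n)$ is \emph{infinite}-dimensional for every $n>0$ when $G$ is anti-affine and non-complete. The restriction map goes the wrong way for your claim: one has $H^0(\overline{G},\overline{\cL}^n)\hookrightarrow H^0(G,\cL^n)$, and sections over $G$ need not extend, because the boundary $\overline{G}\setminus G$ has codimension $1$ (Proposition~\ref{prop:bou}) and sections may acquire poles of arbitrary order along it. Concretely, writing $\cL=\alpha^*(\cM)$ for the Albanese map $\alpha: G\to A$ (possible since $\Pic(G_{\aff})=0$), one gets $H^0(G,\cL)\simeq H^0(A,\cM\otimes\alpha_*(\cO_G))$, and $\alpha_*(\cO_G)$ decomposes as $\bigoplus_{\lambda\in\Lambda}\cL_\lambda$ in the semi-abelian case (with each $\cM\otimes\cL_\lambda$ ample, hence with sections), or carries an infinite filtration with subquotients $\cO_A$ in the vector-extension case; either way infinitely many nonzero contributions appear. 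Once this is known, the proof collapses to three lines: $\cO(X)=\bigoplus_{n\geq 0}H^0(G,\cL^n)$ is positively graded with degree-zero part $k$ and infinite-dimensional graded pieces, hence not finitely generated, hence non-noetherian by graded Nakayama. Your elaborate Proj/Goodman machinery is built on the false finite-dimensionality premise and never engages with the structure of $\alpha_*(\cO_G)$, which is where the actual work lies.

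Two smaller points. First, you conflate ``not finitely generated'' with ``not noetherian'' without comment; for a positively graded ring with $R_0=k$ these are equivalent via the graded Nakayama lemma, but that step must be stated since the theorem asserts non-noetherianity. Second, your contradiction in the Goodman step (``$G$ is affine, but anti-affine, hence a point'') would at best show $\cO(X)$ is not finitely generated \emph{in a way compatible with your false structural picture}; it cannot recover the theorem because the hypothesis it contradicts (finite-dimensional graded pieces plus finite generation) is never satisfied here in the first place.
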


\begin{proof}
As $X = \Spec_{\cO_G}( \bigoplus_{n \in \bZ} \cL^n)$, we have
$\cO(X) = \bigoplus_{n \in \bZ} H^0(G, \cL^n)$.
Moreover, $H^0(G,\cO_G) = k$ by assumption, and the $k$-vector 
space $H^0(G, \cL^n)$ is infinite-dimensional for any $n > 0$ 
by the next lemma. Since $\cO(X)$ is a domain, it follows that 
$H^0(G, \cL^n) = 0$ for any $n < 0$, i.e., the algebra $\cO(X)$ is 
positively graded. Clearly, this algebra is not finitely generated, 
and hence non-noetherian by the graded version of Nakayama's lemma. 
\end{proof}

\begin{lemma}\label{lem:inf}
Let $\cL$ be an ample invertible sheaf on an anti-affine 
algebraic group $G$. If $G$ is non-complete, then the 
$k$-vector space $H^0(G,\cL)$ is infinite-dimensional.
\end{lemma}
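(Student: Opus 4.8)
The plan is to exploit the structure theory of anti-affine groups developed above, together with Chevalley's theorem, to reduce the statement to a cohomological computation on an abelian variety. Since $G$ is non-complete and anti-affine, we are not in the finite-field case, and by Proposition~\ref{prop:pos} (in positive characteristic) or by the characteristic-$0$ analysis, $G$ is an extension of the abelian variety $A = A(G) = G/G_{\aff}$ by the connected affine group $G_{\aff}$. Non-completeness of $G$ forces $G_{\aff}$ to be non-trivial; moreover, since $G$ is anti-affine, $G_{\aff}$ is commutative (a torus in positive characteristic, a product of a torus and a vector group in characteristic $0$), and in any case $G_{\aff}$ contains a non-trivial copy of either $\bG_m$ or $\bG_a$. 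So let $N \subset G_{\aff}$ be a non-trivial connected subgroup isomorphic to $\bG_m$ or $\bG_a$, and let $q : G \to G/N =: \bar{G}$ be the quotient, which is again a connected algebraic group (anti-affine in fact, by Lemma~\ref{lem:ext}(ii), though we may not need that).

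First I would push $\cL$ down partway: the point is that $H^0(G,\cL) = H^0(\bar{G}, q_*(\cL))$, and the quasi-coherent sheaf $q_*(\cL)$ decomposes according to the action of $N$ on $G$. If $N \cong \bG_m$, then $q_*(\cL) = \bigoplus_{m \in \bZ} \cM_m$ where $\cM_m$ is the weight-$m$ eigensheaf, each $\cM_m$ a twist of an invertible sheaf on $\bar{G}$ (an $N$-torsor over $\bar G$ twisted by $\cL$); if $N \cong \bG_a$, then $q_*(\cL)$ carries an infinite increasing filtration with invertible subquotients, by the argument of Lemma~\ref{lem:fib} applied to the $\bG_a$-torsor $q$. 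In either case, $H^0(G,\cL)$ receives contributions from infinitely many pieces, and it suffices to show that infinitely many of these contributions are non-zero, or even that a single well-chosen piece is already infinite-dimensional. The cleanest route is to restrict $\cL$ further to the fibre of $\alpha_G : G \to A$ over the origin, i.e.\ to $G_{\aff}$ itself, or to iterate until the base is the abelian variety $A$; then ampleness of $\cL$ means positivity that survives in the pushforward, and one computes $H^0$ of invertible sheaves on $A$ by Riemann--Roch, getting arbitrarily large dimensions as the pieces vary. Concretely, $\alpha_{G,*}(\cL)$ on $A$ has a filtration (by the torus weights and the unipotent Lemma~\ref{lem:fib}) with invertible subquotients that are, up to a fixed algebraically-trivial twist, tensor powers of an ample sheaf on $A$; summing $H^0$ over infinitely many such subquotients, and using that $h^0(A, \text{ample}) \to \infty$, yields infinite-dimensionality of $H^0(G,\cL)$.

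The key steps, in order, are: (1) use non-completeness to produce a non-trivial $\bG_m$ or $\bG_a$ inside $G_{\aff}$, and more generally record that $G_{\aff}$ is positive-dimensional with known structure; (2) express $H^0(G,\cL)$ via the Leray-type decomposition for $\alpha_{G,*}(\cL)$ on $A$, using the torus-weight decomposition on the multiplicative part and Lemma~\ref{lem:fib} on the unipotent part; (3) identify the invertible subquotients on $A$ as twists of powers of an ample line bundle — here I would use that the restriction of $\cL$ to a fibre of $\alpha_G$ is ample on that affine (hence, in the torus case, toric) fibre, which forces the weights occurring to be unbounded, so that the associated sheaves on $A$ run through arbitrarily positive line bundles; (4) conclude by Riemann--Roch on $A$ that infinitely many summands have non-zero $H^0$, with dimensions tending to infinity, whence $\dim_k H^0(G,\cL) = \infty$.

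I expect step (3) to be the main obstacle: showing that ampleness of $\cL$ on $G$ actually propagates to \emph{positivity} of the relevant line bundles on the abelian variety $A$, rather than merely non-vanishing. The subtlety is that $\alpha_G$ is an affine morphism with non-proper fibres, so ``ampleness relative to $\alpha_G$'' is automatic and carries little information by itself; one genuinely needs the global ampleness of $\cL$ to pin down the behaviour of the pieces $\cM_m$ as $m$ grows. I would handle this by restricting $\cL$ to a complete curve in $G$ that surjects onto $A$ under $\alpha_G$ — or to $A$ itself via a section after an isogeny, if one is available — and using that $\cL$ has positive degree on such a curve, together with the weight decomposition, to force the degrees of the $\cM_m$ on $A$ to grow linearly in $m$; ampleness of a limit of these then gives the Riemann--Roch growth. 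An alternative, possibly cleaner, is to observe that $\Spec \cO(X) = \Spec \bigoplus_n H^0(G,\cL^n)$ is the affinization of the quasi-affine $X$, to note that $X$ maps to $A$ with affine fibres, and to reduce directly to the known fact that an ample line bundle on any positive-dimensional variety has sections of unbounded degree on suitable complete subvarieties — but formulating that carefully on the non-proper $G$ is exactly where the work lies.
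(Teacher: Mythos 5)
Your overall skeleton (push $\cL$ down to the abelian variety $A=G/G_{\aff}$ via $\alpha_G$, decompose $\alpha_{G,*}$ into torus eigensheaves plus a unipotent filtration as in Lemma~\ref{lem:fib}, and count contributions to $H^0$) matches the paper's, but the step you yourself flag as the main obstacle --- step (3) --- is where the proposal goes wrong, and the fix is not the one you sketch. The paper's key observation is that $\cL$ \emph{descends}: since $\alpha_G$ is a torsor under the connected commutative affine group $G_{\aff}$, whose Picard group is trivial, one has $\cL=\alpha_G^*(\cM)$ for an invertible sheaf $\cM$ on $A$, and $\cM$ is ample by Raynaud's descent of ampleness along such morphisms. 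Consequently $H^0(G,\cL)\simeq H^0\bigl(A,\cM\otimes\alpha_{G,*}(\cO_G)\bigr)$, and in the semi-abelian case the summands are $\cM\otimes\cL_\lambda$ with $\cL_\lambda$ \emph{algebraically trivial}. Your expectation that the pieces "run through arbitrarily positive line bundles" with "degrees growing linearly in $m$" is false for this decomposition: all the pieces are algebraically equivalent to the fixed ample $\cM$ and have the \emph{same} $h^0=\chi(\cM)>0$. The infinite-dimensionality comes from there being infinitely many characters $\lambda$ (as $\Lambda\neq 0$ when $G$ is non-complete), each contributing a fixed non-zero amount --- not from Riemann--Roch growth. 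An argument built on forcing the degrees of the $\cM_m$ to grow would be attempting to prove something that is not true, so that route cannot be completed as stated.

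Two further points. First, in the unipotent (characteristic $0$) case you "sum $H^0$ over infinitely many subquotients," but a filtration is not a direct sum: to conclude that an infinite increasing filtration of $\cM\otimes\alpha_{G,*}(\cO_G)$ with subquotients $\cM$ yields an infinite-dimensional $H^0$, you need the connecting maps to vanish, i.e.\ $H^1(A,\cM)=0$, which holds because $\cM$ is ample on an abelian variety; this ingredient is absent from the proposal. Second, your own earlier remark --- that it suffices to show infinitely many contributions are non-zero --- is exactly the right endpoint; the detour through unbounded weights, restrictions to complete curves, and the affinization of $X$ is unnecessary once the descent $\cL=\alpha_G^*(\cM)$ is in hand.
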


\begin{proof}
We may assume that $k$ is algebraically closed.
The quotient homomorphism $\alpha = \alpha_G : G \to A(G) =: A$ 
is a torsor under the connected commutative affine algebraic group 
$G_{\aff}$. Since the Picard group of $G_{\aff}$ is trivial, 
it follows that $\cL = \alpha^*(\cM)$ for some invertible sheaf 
$\cM$ on $A$. Moreover, $\cM$ is ample by the ampleness of $\cL$ 
together with \cite[Lem.~XI 1.11.1]{Ra70}. We have 
\begin{equation}\label{eqn:dir}
H^0(G,\cL) \simeq H^0 \big( A, \cM \otimes \alpha_* (\cO_G) \big).
\end{equation}

In the case where $G$ is a semi-abelian variety, Equations
(\ref{eqn:dec}) and (\ref{eqn:dir}) yield the decomposition
$$
H^0(G,\cL) \simeq \bigoplus_{\lambda \in \Lambda}
H^0(A, \cM \otimes \cL_{\lambda}).
$$
As each $\cL_{\lambda}$ is algebraically
trivial, $\cM \otimes \cL_{\lambda}$ is ample, and hence 
admits non-zero global sections (see \cite[p.~163]{Mu70}); this 
yields our statement in this case.

In the general case, we may assume in view of Proposition 
\ref{prop:pos} and the isomorphism (\ref{eqn:tens}) that 
$k$ has characteristic $0$, and $G_{\aff}$ is a non-zero vector 
space $U$. Then $\cM \otimes \alpha_* (\cO_G)$ admits an infinite 
increasing filtration with subquotients isomorphic to $\cM$, 
by Lemma \ref{lem:fib}. Since $H^1(A,\cM) = 0$ 
(see \cite[p.~150]{Mu70}), it follows that $H^0(G,\cL)$ admits 
an infinite increasing filtration with subquotients isomorphic 
to $H^0(A,\cM)$, a non-zero vector space.
\end{proof}

\begin{example}\label{ex:ell}
The smallest examples arising from the preceding construction are
threefolds; they may be described as follows.

Consider an invertible sheaf $\cL$ of positive degree on an elliptic 
curve $C$. If $k$ has characteristic $0$, let $\pi : G \to C$ 
denote the $\bG_a$-torsor associated to the canonical generator of 
$H^1(C,\cO_C) \simeq H^0(C, \cO_C)^*$. Then $G$ is the universal
extension $E(C)$, and the $\bG_m$-torsor on $G$ associated to the
ample invertible sheaf $\pi^*(\cL)$ yields the desired example $X$.

When $k = \bC$, the analytic manifolds associated to $G$ and $X$ 
are both Stein; see \cite{Ne88} which also contains an analytic 
proof of the fact that $\cO(X)$ is not finitely generated. 
More generally, the universal extension $E(A)$ of a complex abelian
variety of dimension $g$ is analytically isomorphic to $(\bC^*)^{2g}$, 
see e.g. \cite[Rem.~7.7]{Ne88}. In particular, the complex manifold 
associated to $E(A)$ is Stein.

Returning to a field $k$ of arbitrary characteristics, assume that the
elliptic curve $C$ has a $k$-rational point $x$ of infinite order
(such curves exist if $k$ contains either $\bQ$ or $\bF_p(t)$, see
\cite{ST67}). Denote by $\cM$ the invertible sheaf on $C$ associated
to the divisor $(x) - (0)$. Then $\cM$ is algebraically trivial and
has infinite order. Thus, 
$G := \Spec_{\cO_C} (\bigoplus_{n \in \bZ} \cM^n)$
is an anti-affine semi-abelian variety, and 
$$
X := \Spec_{\cO_C} \big( \bigoplus_{(m, n) \in \bZ^2} 
\cL^m \otimes_{\cO_A} \cM^n \big)
$$
is the desired example.

It should be noted that $\cO(X)$ is finitely generated for any smooth
surface $X$, as shown by Zariski (see \cite{Za54}). Also, Kuroda has
constructed counterexamples to Hilbert's original problem, in dimension
$3$ and characteristic $0$ (see \cite{Ku05}). 
\end{example}

Another consequence of Lemma \ref{lem:inf} is the following:

\begin{proposition}\label{prop:bou}
For any completion $\overline{G}$ of a connected algebraic group 
$G$, the boundary $\overline{G} \setminus G$ is either empty or 
of codimension $1$.
\end{proposition}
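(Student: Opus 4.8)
The plan is to reduce the statement to the anti-affine case treated in Lemma~\ref{lem:inf}, exploiting the Rosenlicht decomposition. First I would observe that the question is local in nature and may be checked after base change to $\kb$, so assume $k$ algebraically closed. Let $\overline{G}$ be a completion of $G$ and suppose the boundary $\partial G := \overline{G} \setminus G$ is non-empty; I must show every irreducible component of $\partial G$ has codimension $1$ in $\overline{G}$. Since $G$ is a smooth variety and $\overline{G}$ may be chosen projective (or at least we may dominate it by a projective completion, and codimension of the boundary can only be checked on a birational model containing the same $G$ as an open subset — here one has to be a little careful, but the standard device is to replace $\overline{G}$ by the closure of $G$ in $\overline{G} \times \bP^N$ for a suitable immersion, which does not change $G$ and can only subdivide or lower-dimensionalize the boundary), I would work with $\overline{G}$ normal and projective.

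The key input is that $G$, being quasi-affine would contradict non-properness only trivially, so instead the real engine is: if some component $Z$ of $\partial G$ had codimension $\ge 2$, then every regular function on $G = \overline{G} \setminus \partial G$ extends across $Z$ by normality (Hartogs / the fact that a normal variety satisfies $S_2$, so depth $\ge 2$ along codimension-$\ge 2$ subsets). More precisely, if \emph{all} components of $\partial G$ had codimension $\ge 2$, then $\cO(G) = \cO(\overline{G}) = k$ since $\overline{G}$ is complete, forcing $G$ itself to be anti-affine. But then Lemma~\ref{lem:inf} applies: for any ample $\cL$ on $\overline{G}$, restricting to $G$ and using that sections extend across the high-codimension boundary, one would get $H^0(G,\cL|_G) = H^0(\overline{G},\cL)$ finite-dimensional, whereas $G$ non-complete and anti-affine forces this space to be infinite-dimensional — a contradiction. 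Hence at least one component has codimension $1$; to get that \emph{every} component does, I would run the same argument after discarding the codimension-$1$ components: let $\overline{G}'$ be $\overline{G}$ with the codimension-$1$ boundary components removed (an open subvariety), so $G \subset \overline{G}'$ with $\overline{G}' \setminus G$ of codimension $\ge 2$ in $\overline{G}'$, and $\overline{G}'$ normal. Now I cannot immediately say $\overline{G}'$ is complete, so this needs the sharper observation.

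So the cleaner route, which I would actually carry out, is this. Suppose $Z \subseteq \partial G$ is a component of codimension $\ge 2$. Pick a point $z$ of $Z$ not lying on any other component of $\partial G$ and not on the closure of any codimension-$1$ component. Then in a neighbourhood $U$ of $z$ in $\overline{G}$, we have $U \setminus G = U \cap Z$ of codimension $\ge 2$, $U$ normal. By the $S_2$ property, $\cO(U \cap G) = \cO(U)$, and similarly every section of every line bundle on $U$ restricts isomorphically from $U$ to $U \cap G$. Now translate: $G$ acts on itself, and I would use the transitive action of $G(\kb)$ on $G$ to spread this out. Actually the decisive structural fact is the isogeny from the Rosenlicht decomposition and Chevalley's theorem: $\alpha_G : G \to A(G)$ is a $G_{\aff}$-torsor over an abelian variety. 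Since $G_{\aff}$ is affine and $A(G)$ is projective, a completion of $G$ restricts over a dense open of $A(G)$ to (a modification of) $G_{\aff} \times (\text{open in } A(G))$; the boundary of an affine variety in any of its completions — and here $G_{\aff}$, being a connected commutative affine group, is $T \times U$ with $T$ a torus and $U \cong \bA^n$ — is pure of codimension $1$ by the classical fact (e.g. for toric varieties, or just: $\cO(G_{\aff})$ is finitely generated so the boundary supports the divisor of a non-unit, hence is a nonempty divisor, and purity follows because $G_{\aff}$ is affine — a closed subset of codimension $\ge 2$ in a normal variety cannot be the whole boundary of an affine open).

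The main obstacle I anticipate is handling completions $\overline{G}$ that are \emph{not} normal and not projective: the Hartogs-type extension of functions and sections genuinely needs normality (or at least $S_2$), and "codimension of the boundary" must be shown invariant under the normalization $\nu : \widetilde{\overline{G}} \to \overline{G}$, which is finite and surjective, hence preserves codimension of closed subsets and sends the boundary to the boundary — so this reduction is routine but must be stated. The other subtlety is that removing the codimension-$1$ boundary components from $\overline{G}$ leaves an open subscheme that need not be a \emph{complete} variety, so the clean "$\cO(\overline{G}) = k$" argument does not directly apply to it; the fix is the localized $S_2$ argument at a general point $z$ of the bad component $Z$ sketched above, combined with the purity-of-boundary statement for the affine group $G_{\aff}$ transported along the torsor $\alpha_G$. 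I would present the proof in the order: (1) reduce to $k = \kb$ and $\overline{G}$ normal via normalization; (2) recall $\alpha_G : G \to A(G)$ is a $G_{\aff}$-torsor with $G_{\aff}$ affine and $A(G)$ an abelian variety; (3) over a dense open $V \subseteq A(G)$ trivialize the torsor and identify the boundary of $G$ over $V$ with the boundary of $G_{\aff}$ in a completion, which is pure codimension $1$ since $G_{\aff}$ is affine; (4) for the remaining boundary (lying over a proper closed subset of $A(G)$), invoke Lemma~\ref{lem:inf}: were there a component of codimension $\ge 2$, discard all codimension-$1$ boundary components, observe the resulting $\overline{G}'$ still has $\cO(\overline{G}') = \cO(G)$ by $S_2$, note $\overline{G}'$ is \emph{proper over $A(G)$} hence its global functions pull back from $A(G)$ and equal $k$ when $G$ is anti-affine, giving the contradiction with infinite-dimensionality of $H^0(G,\cL)$ — and handle the general (non-anti-affine) $G$ by factoring through $G_{\ant}$ exactly as in the proof of Lemma~\ref{lem:inf}, where one reduces to $G_{\aff}$ a torus or a vector group.
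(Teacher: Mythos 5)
The statement you are asked to prove is weaker than what you set out to establish: ``of codimension $1$'' here means that the codimension of the (non-empty) boundary equals $1$, i.e.\ \emph{some} irreducible component is a divisor --- not that every component is. The paper's own proof is exactly the contradiction argument of your first paragraph: pass to the normalization, note that if the boundary had codimension $\geq 2$ then $\cO(G)=\cO(\overline{G})=k$ by normality and completeness, so $G$ would be anti-affine and non-complete, and then contradict Lemma \ref{lem:inf}. The purity statement is deferred to the Remark following the proposition, where the author explicitly says it requires ``completely different methods''; so the bulk of your proposal is aimed at a claim you do not need.

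Two concrete problems remain, one in the part you do need and one in the extra part. First, you invoke ``any ample $\cL$ on $\overline{G}$'': a completion of $G$ need not be projective, so such an $\cL$ may not exist. The paper instead takes $\cL$ ample on $G$ itself (every connected algebraic group is quasi-projective) and pushes it forward: since $\overline{G}$ is normal and the boundary is assumed to have codimension $\geq 2$, $i_*(\cL)$ is the coherent sheaf of sections of a Weil divisor, so $H^0(\overline{G}, i_*(\cL)) = H^0(G,\cL)$ is finite-dimensional, against Lemma \ref{lem:inf}. Second, in your purity argument the step ``$\overline{G}'$ is proper over $A(G)$'' is unjustified and generally false: $\alpha_G$ need not even extend to all of $\overline{G}$ (only to an open subset whose complement has codimension $\geq 2$), and deleting the codimension-one boundary components destroys any properness, so you cannot conclude $\cO(\overline{G}')=k$ this way. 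Likewise the plan to trivialize a completion of the $G_{\aff}$-torsor over a dense open of $A(G)$ and transport purity of the boundary of $G_{\aff}$ is not actually carried out and would require a genuinely different argument --- which is precisely why the paper relegates purity to the remark.
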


\begin{proof}
We argue by contradiction, and assume that 
$\overline{G} \setminus G$ is non-empty of codimension $\geq 2$. 
We may further assume that $\overline{G}$ is normal; then the 
map $i^*: \cO(\overline{G}) \to \cO(G)$
is an isomorphism, where $i : G \to \overline{G}$ denotes the
inclusion. It follows that $G$ is anti-affine and non-complete.

Choose an ample invertible sheaf $\cL$ on $G$. Then $i_*(\cL)$ is the
sheaf of sections of some Weil divisor on $\overline{G}$; in
particular, this sheaf is coherent. Thus, the $k$-vector space
$H^0(\overline{G}, i_*(\cL)) = H^0(G,\cL)$ 
is finite-dimensional, contradicting Lemma \ref{lem:inf}.
\end{proof}

\begin{remark}
With the assumptions of the preceding proposition, one may show 
(by completely different methods) that the boundary has \emph{pure}
codimension $1$. For a $G$-\emph{equivariant} completion
$\overline{G}$ (that is, the action of $G$ on itself by left
multiplication extends to $\overline{G}$), this follows easily from  
\cite[Thm.~3]{Br07}. Namely, we may assume that $k$ is 
algebraically closed and $\overline{G}$ is normal; then 
$\overline{G} \simeq G \times^{G_{\aff}} \overline{G_{\aff}}$, and 
$\overline{G_{\aff}} \setminus G_{\aff}$ has pure codimension $1$ 
in $\overline{G_{\aff}}$, as $G_{\aff}$ is affine.
\end{remark}

\end{document}